\newcommand{\SSection}{Section}
\newcommand{\ptcheck}[1]{}
\newcommand{\erw}[1]{\mnote{{\bf erw:} #1}}
\newcommand{\zmcD}{\,\,\mathring{\!\!\mcD}}
\newcommand{\intOmega}{{\int_\Omega}}
\newcommand{\sources}{{\mcC}}
\newcommand{\thetaw}{w}
\newcommand{\hg}{{\hat g{}}}
\newcommand{\hgtwo}{g}
\newcommand{\lots}{\mathrm{l.o.t.}}
\newcommand{\wzg}{\widetilde{\zg}}
\newcommand{\definingz}{z}
\newcommand{\ringh}{\,\mathring{\! h}{}}
\newcommand{\nozHkabc}{\mathring H^k_{x \rho^{-1} ,\, x^a z^b \rho^c}(\Omega)}%
\newcommand{\Habc}[1]{\mathring H^{#1}_{x \rho^{-1},\,x^a z^b \rho^c}(\Omega)}%
\newcommand{\varphiepsiloneta}{\varphi_{\epsilon}}
\newcommand{\etaepsilon}{\epsilon}
\newcommand{\wG}{{\widetilde \Gamma}}
\newcommand{\wnabla}{{\widetilde \nabla}}
\newcommand{\divr }{\mbox{\rm div}\,}
\newcommand{\Lpsi}{L^2_{\psi}}
\newcommand{\Lpsione}{\zH^1_{\phi,\psi}}
\newcommand{\Lpsitwo}{\zH^2_{\phi,\psi}}
\newcommand{\Lpsikg}[2]{\zH^{#1}_{\phi,\psi}(#2)}
\newcommand{\zHkpp}{\zHk_{\phi,\psi}}
\newcommand{\Hkpp}{H^k_{\phi,\psi}}
\newcommand{\zHk}{\zH^k}
\newcommand{\zH}{\mathring{H}}
\newcommand{\hs}{\cH_{\mbox{\scriptsize sing}}}
\newcommand{\beadl}[1]{\begin{deqarr}\label{#1}}
\newcommand{\eeadl}[1]{\arrlabel{#1}\end{deqarr}}%
\def \Nat{\mathbb{N}}
\def\nz{\ifmmode {I\hskip -3pt N} \else {\hbox {$I\hskip -3pt N$}}\fi}
\def\zz{\ifmmode {Z\hskip -4.8pt Z} \else
       {\hbox {$Z\hskip -4.8pt Z$}}\fi}
\def\qz{\ifmmode {Q\hskip -5.0pt\vrule height6.0pt depth 0pt
       \hskip 6pt} \else {\hbox
       {$Q\hskip -5.0pt\vrule height6.0pt depth 0pt\hskip 6pt$}}\fi}
\def\rz{\ifmmode {I\hskip -3pt R} \else {\hbox {$I\hskip -3pt R$}}\fi}
\def\cz{\ifmmode {C\hskip -4.8pt\vrule height5.8pt\hskip 6.3pt} \else
       {\hbox {$C\hskip -4.8pt\vrule height5.8pt\hskip 6.3pt$}}\fi}
\def\au{{\setbox0=\hbox{\lower1.36775ex\hbox{''}\kern-.05em}\dp0=.36775ex\hs
kip0pt\box0}}
\def\ao{{}\kern-.10em\hbox{``}}
\newcommand\Gregbeq{\begin{eqnarray}}
\newcommand\Gregeeq{\end{eqnarray}}
\def\cH{{\cal H}}
\def\h1{{\hat 1}}
\def\h2{{\hat 2}}
\def\3f{\frac{3}{2}}
\newcommand{\roscoff}[1]{}
\global\let\AddToReset=\@addtoreset}
\DeclareFontFamily{OT1}{rsfs}{}
\DeclareFontShape{OT1}{rsfs}{m}{n}{ <-7> rsfs5 <7-10> rsfs7 <10-> rsfs10}{}
\DeclareMathAlphabet{\mycal}{OT1}{rsfs}{m}{n}
\global\let\AddToReset=\@addtoreset}
\newcounter{mnotecount}[section]
\renewcommand{\themnotecount}{\thesection.\arabic{mnotecount}}
\newcommand{\wasdmug}{}
\newcommand{\oversetty}[2]{%
\mathop{#2}\limits^{\vbox to -.1ex{%
\kern -1.5ex\hbox{$\scriptstyle #1$}\vss}}}
\newcommand{\jlcax}[1]{}
\newcommand{\eean}{\nonumber\end{eqnarray}}
\newcommand{\ptcr}[1]{{\color{red}\mnote{{\color{red}{\bf ptc:}
#1} }}}
\newcommand{\kk}[1]{}
\newcommand{\mcH}{{\mycal H}}
\newcommand{\beq}{\begin{equation}}
\newcommand{\FS}       
                  {F}
\newcommand{\HS} 
       {H_{\mbox{\scriptsize volume}}}
\newcommand{\eeal}[1]{\label{#1}\end{eqnarray}}
\newcommand{\bed}{\begin{deqarr}}
\newcommand{\eed}{\end{deqarr}}
\newcommand{\bedl}[1]{\begin{deqarr}\label{#1}}
\newcommand{\eedl}[2]{\arrlabel{#1}\label{#2}\end{deqarr}}
\newcommand{\loc}{\textrm{\scriptsize\upshape loc}}
\newcommand{\tg}{{\widetilde{g}}}
\newcommand{\mcU}{{\mycal U}}
\newcommand{\mcK}{{\mycal K}}
\newcommand{\bel}[1]{\begin{equation}\label{#1}}
\newcommand{\bea}{\begin{eqnarray}}
\newcommand{\bean}{\begin{eqnarray}\nonumber}
\newcommand{\beal}[1]{\begin{eqnarray}\label{#1}}
\newcommand{\eea}{\end{eqnarray}}
\newcommand{\nn}{\nonumber}
\newcommand{\Eq}[1]{Equation~\eq{#1}}
\def\typeout{:<+ #.tex}\include{#}\typeout{:<-}1{\typeout{:<+ #1.tex}\include{#1}\typeout{:<-}}
\newcommand{\myqed}{{\hfill $\Box$}}
\newcommand{\qedskip}{\endproof}
\newcommand{\be}{\begin{equation}}
\newcommand{\eeq}{\end{equation}}
\newcommand{\ee}{\end{equation}}
\newcommand{\beqa}{\begin{eqnarray}}
\newcommand{\eeqa}{\end{eqnarray}}
\newcommand{\beqan}{\begin{eqnarray*}}
\newcommand{\eeqan}{\end{eqnarray*}}
\newcommand{\ba}{\begin{array}}
\newcommand{\ea}{\end{array}}
\newcommand{\mcD}{{\mycal D}}
\newcommand{\mcV}{{\mycal V}}
\newcommand{\mnote}[1]
{\protect{\stepcounter{mnotecount}}$^{\mbox{\footnotesize
$
\bullet$\themnotecount}}$ \marginpar{
\raggedright\tiny\em
$\!\!\!\!\!\!\,\bullet$\themnotecount: #1} }
\newcommand{\warn}[1]
{\protect{\stepcounter{mnotecount}}$^{\mbox{\footnotesize
$
\bullet$\themnotecount}}$ \marginpar{
\raggedright\tiny\em
$\!\!\!\!\!\!\,\bullet$\themnotecount: {\bf Warning:} #1} }
\newcommand{\Ricc}{\mathrm{Ric}\,}
\newcommand{\R}{\mathbb R}
\newcommand{\N}{\mathbb N}
\newcommand{\Z}{\mathbb Z}
\newcommand{\bM}{\overline{\! M}}
\newcommand{\eq}[1]{(\ref{#1})}
\newcommand{\ptc}[1]{\mnote{{\bf ptc:}#1}}
\newcommand{\Ric}{\mbox{\rm Ric}}
\newcommand{\mcC}{{\mycal C}}
\newcommand{\beqar}{\begin{deqarr}}
\newcommand{\eeqar}{\end{deqarr}}
\newcommand{\beaa}{\begin{eqnarray*}}
\newcommand{\eeaa}{\end{eqnarray*}}
\newcommand{\tr}{\mathrm{tr}}
\newcommand{\zg}{\mathring{g}}
\newcommand{\znabla}{\mathring{\nabla}}
\newcommand{\bethm}{\begin{theorem}}
\newcommand{\et}{\end{theorem}}
\newcommand{\bl}{\begin{Lemma}}
\newtheorem{Theorem} {\sc  Theorem\rm} [section]
\newtheorem{theorem} [Theorem] {\sc  Theorem\rm}
\newtheorem{corollary} [Theorem] {\sc  Corollary\rm}
\newtheorem{cor} [Theorem] {\sc  Corollary\rm}
\newtheorem{Lemma} [Theorem] {\sc  Lemma\rm}
\newtheorem{Proposition} [Theorem] {\sc  Proposition\rm}
\newtheorem{prop} [Theorem] {\sc  Proposition\rm}
\newtheorem{Definition}[Theorem]{\sc  Definition\rm}
\newtheorem{lemma} [Theorem] {\sc  Lemma\rm}
\newtheorem{proposition} [Theorem] {\sc  Proposition\rm}
\newtheorem{Remark}[Theorem]{\sc Remark\rm}
\newtheorem{remark}[Theorem]{\sc Remark\rm}
\theoremstyle{nonumberplain}
\DeclareFontFamily{OT1}{rsfs}{}
\DeclareFontShape{OT1}{rsfs}{m}{n}{ <-7> rsfs5 <7-10> rsfs7 <10-> rsfs10}{}
\DeclareMathAlphabet{\mycal}{OT1}{rsfs}{m}{n}
\global\let\AddToReset=\@addtoreset}
\global\let\AddToReset=\@addtoreset}
\renewcommand{\ptc}[1]{}
\renewcommand{\ptcr}[1]{}
\renewcommand{\erw}[1]{}
\begin{document}

\title{Exotic hyperbolic gluings}

\author{Piotr T.~Chru\'sciel}
\address{
Faculty of Physics and Erwin Schr\"odinger Institute\\
 University of Vienna \\Boltzmanngasse 5\\ A 1090 Wien, Austria\\}
\email{\url{piotr.chrusciel@univie.ac.at}\\
URL \url{http://homepage.univie.ac.at/piotr.chrusciel}}

\author{Erwann Delay}
\address{
Laboratoire de Math\'ematiques d'Avignon\\
UFR-ip Sciences, Technologies, Sant\'e\\
Campus Jean-Henri Fabre\\
301 rue Baruch de Spinoza\\
BP 21239,
F-84916 Avignon Cedex 9,
France\\}
\email{
\url{erwann.delay@univ-avignon.fr}\\
URL \protect\url{http://math.univ-avignon.fr}}

\maketitle
\begin{abstract}
We carry out ``exotic gluings'' a la Carlotto-Schoen for asymptotically hyperbolic general relativistic initial data sets. In particular we obtain a direct construction of non-trivial initial data sets which are exactly hyperbolic in large regions extending to conformal infinity.
\end{abstract}

\tableofcontents

\section{Introduction}
 \label{s23III13.1}

In an outstanding paper~\cite{CarlottoSchoen}, Carlotto and Schoen have shown that gravity can be screened away, using a gluing construction which produces asymptotically flat solutions of the general relativistic constraint equations which are Minkowskian within a solid cone. The object of this work is to establish a similar result for asymptotically hyperbolic initial data sets in all dimensions $n\ge 3$.

Our result has a direct analogue in a purely Riemannian setting of asymptotically hyperbolic metrics with constant scalar curvature; this corresponds to vacuum general relativistic initial data sets where the extrinsic curvature tensor is pure trace.
We present a simple version of the gluing here, the reader is referred Section~\ref{sec:def} for precise definitions, to Theorems~\ref{T21IX15.1}, \ref{propRcassimple2} and \ref{Thefulltheorem}  for more general results and to Theorem~\ref{T3X15.1} for an application.
 \ptcr{added}

  Consider a  manifold $M$ with two asymptotically hyperbolic metrics $g$ and $\hat g$.
 Assume that $g$ and $\hat g$ approach the same hyperbolic metric $h$ as  the conformal boundary at infinity is approached. We use the half-space-model coordinates near the conformal boundary, so that
$$
 \mcH
  =\{(\theta ,z)|\ z > 0, \theta  \in\R^{n-1}\}\subset \R^n
  \,,
  \qquad
   h = z^{-2} (dz^2 + d\theta^2)
   \,.
$$
We set\erw{  delta squared}
\bel{18V15.2}
 B_\delta  := \{z>0\,,\    |\theta |^2 + z^2 <\delta^2
 \}
 \,,
 \quad
 \complement B_\delta := M\setminus B_\delta
 \,.
\ee
We use the above coordinates as local coordinates near a point at the conformal boundary for asymptotically hyperbolic metrics.

Let $\epsilon$ be a small scaling parameter. A special case  of Theorem~\ref{propRcassimple2} below reads:

\begin{Theorem}
  \label{T1815.2}
Let $k>n/2$ and let  ${\hg}$, ${\hgtwo}$ be  $C^{k+4}$-asymptotically
hyperbolic  metrics on an $n$-dimensional manifold $M$.
There exists $0<\epsilon_0<1$ such that for all $0<\epsilon<\epsilon_0$ there exists an asymptotically hyperbolic metric $g_\epsilon$, of $C^{k+2-\lfloor n/2\rfloor}$
differentiability class, and with  scalar curvature lying between the scalar curvatures of $\hg$ and $\hgtwo$ such that
\bel{18V15.3}
 g_\epsilon|_{B_{\epsilon   }} = {\hg}
 \,,
 \quad
 g_\epsilon|_{\complement B_{2\epsilon }} = {\hgtwo}
  \,.
\ee
\end{Theorem}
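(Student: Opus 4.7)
The plan is to carry out a Carlotto--Schoen-style gluing adapted to the asymptotically hyperbolic category. The first step is to form the naive interpolant
\[
g^{(0)}_\epsilon := \chi_\epsilon\,\hat g + (1-\chi_\epsilon)\,g\,,
\]
where $\chi_\epsilon$ is a smooth cut-off equal to $1$ on $B_\epsilon$ and to $0$ off $B_{2\epsilon}$, with gradient in the hyperbolic metric $h$ of size $O(1/\epsilon)$ in the transition annulus $A_\epsilon := B_{2\epsilon}\setminus\overline{B_\epsilon}$. Then $g^{(0)}_\epsilon$ already satisfies \eqref{18V15.3}, but its scalar curvature will in general disagree with both $R(\hat g)$ and $R(g)$ inside $A_\epsilon$ and need not lie between them. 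To fix this I prescribe the target
\[
R_\epsilon := \chi_\epsilon R(\hat g) + (1-\chi_\epsilon) R(g)\,,
\]
which lies pointwise between $R(\hat g)$ and $R(g)$, and look for a symmetric $2$-tensor $h$, compactly supported in $A_\epsilon$, such that $R(g^{(0)}_\epsilon + h) = R_\epsilon$.

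Writing $P := P_{g^{(0)}_\epsilon}$ for the linearisation of $g\mapsto R(g)$ at $g^{(0)}_\epsilon$, the equation becomes
\[
P\,h \;=\; \bigl(R_\epsilon - R(g^{(0)}_\epsilon)\bigr) + Q(h)\,,
\]
with $Q$ collecting the quadratic-and-higher terms in $h$ and its first derivatives. The source on the right is supported in $A_\epsilon$ and is controlled by $\|\hat g - g\|_{C^2(A_\epsilon)}$. Since $P$ is underdetermined, I would, in Fischer--Marsden--Corvino style, seek $h = P^*\varphi$ with $\varphi$ a function supported in $A_\epsilon$, reducing everything to the fourth-order scalar equation
\[
P P^*\varphi \;=\; R_\epsilon - R(g^{(0)}_\epsilon) + Q(P^*\varphi)\,,
\]
to be solved in a weighted Sobolev space of the form $\zHkabc$ introduced earlier in the paper. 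The weights $x$, $z$, and $\rho$ respectively track approach to the (Euclidean) boundary of $\Omega$, approach to the conformal infinity $\scri$, and approach to the gluing point; the exponents $a,b,c$ must be tuned so that the shrinking support $A_\epsilon$ is accommodated uniformly in $\epsilon$ and so that the weight cancels the blow-up of the Christoffel symbols of $g^{(0)}_\epsilon$ in $A_\epsilon$.

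The technical heart of the proof is a uniform a priori estimate
\[
\|\varphi\|_{\zHkabc} \;\leq\; C\,\bigl\| P P^*\varphi\bigr\|_{\mathring H^{k-4}_{x\rho^{-1},\,x^a z^b \rho^{c-4}}(\Omega)}
\]
with $C$ independent of $\epsilon\in(0,\epsilon_0]$. The argument proceeds by rescaling $(\theta,z)\mapsto(\epsilon\theta,\epsilon z)$, which is an isometry of the hyperbolic model $(\mcH,h)$; this turns the shrinking region $A_\epsilon$ into the fixed annulus $A_1 := B_2\setminus\overline{B_1}$, and turns $g^{(0)}_\epsilon$ into a small perturbation of $h$ on $A_1$ by virtue of the asymptotically hyperbolic hypothesis on $\hat g$ and $g$. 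On $A_1$, the coercivity of $P_h P_h^*$ on functions vanishing to sufficient order on $\partial A_1$ follows from a no-kernel (unique-continuation type) statement combined with a Hardy--Poincar\'e inequality in the weighted space, and perturbation then yields the desired uniform estimate. This uniform weighted inversion---with the attendant delicate choice of $a,b,c$ balancing the Hardy constants against the loss of ellipticity at the boundary of the annulus---is the step I expect to be the main obstacle. Granted it, a Banach contraction argument in a small ball of $\zHkabc$ produces, for every $\epsilon<\epsilon_0$, a solution $\varphi$ and hence $h = P^*\varphi$ supported in $A_\epsilon$, sufficiently small in $C^0$ that $g_\epsilon := g^{(0)}_\epsilon + h$ is positive definite; Sobolev embedding delivers the $C^{k+2-\lfloor n/2\rfloor}$ regularity, \eqref{18V15.3} holds by construction, and $R(g_\epsilon)=R_\epsilon$ lies between $R(\hat g)$ and $R(g)$ pointwise. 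This yields Theorem~\ref{T1815.2} as a special case of Theorem~\ref{propRcassimple2}.
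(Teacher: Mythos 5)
Your proposal follows essentially the same route as the paper: interpolate the metrics and their scalar curvatures with a cutoff, correct by a tensor in the image of the (weighted) adjoint linearized scalar-curvature operator solved in the triply weighted spaces, obtain uniformity in $\epsilon$ by the hyperbolic dilation $(\theta,z)\mapsto(\lambda\theta,\lambda z)$ that carries the shrinking annulus to the fixed annulus $A_{1,4}$, and conclude by an implicit-function/contraction argument plus weighted Sobolev embedding --- and the uniform coercivity you defer is exactly what the paper's triply weighted Poincar\'e and Korn inequalities together with the no-KIDs appendix supply, so this is the paper's own proof in outline. One minor slip: the cutoff's gradient measured in the hyperbolic metric is $O(1)$, not $O(1/\epsilon)$, which is precisely why the rescaling to a fixed annulus works so cleanly.
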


Note that if both metrics have the same constant scalar curvature, then so will the final metric.

When $g$ and $\hat g$  have a well defined hyperbolic mass, then so does $g_\epsilon$, with the mass of $g_\epsilon$ tending to that of $g$ as $\epsilon$ tends to zero. The reader is referred to Remark~\ref{R21XI15.1} below for a more detailed discussion.

A case of particular interest arises when $g$ has constant scalar curvature and $\hat g$
is the hyperbolic metric. Then the construction above provides a constant scalar curvature metric which coincides with the hyperbolic metric on an open region $B_\epsilon$ extending all the way to the conformal boundary.
This, and variations thereof discussed below, provides in particular examples of large families of CMC horospheres,  hyperbolic\erw{changed flat to hyperbolic} hyperplanes, etc.\ (compare~\cite{MazzeoPacardCMCinAH}), in families of asymptotically hyperbolic constant scalar curvature metrics which are \emph{not} exactly hyperbolic.

As already mentioned, our gluing results apply to asymptotically hyperbolic general relativistic data sets, the reader is referred to Section~\ref{ss18IX15.7} for precise statements.

The differentiability requirements for the initial metrics, as well as the regularity of the final metric, can be improved using techniques as in~\cite{ChDelayHilbert,CCI2},
but we have not attempted to optimize the result.

While the strategy of the proof is known in principle~\cite{ChDelay}, its implementation requires considerable analytical effort.
At the heart of the proof lie the ``triply weighted'' Poincar\'e inequality of Theorem~\ref{T21XI15.1} and the ``triply weighted'' Korn inequality of Theorem~\ref{TC22V15} below.

 \ptcr{additions to the end of the section}

This paper is organised as follows: In Section~\ref{sec:def} the definitions are presented, and  notation  and conventions are spelled out. Our gluing results are presented and proved in Section~\ref{s18IX15.1}, modulo some key technicalities which are deferred to Section~\ref{s18IX15.6} (where the required weighted Poincar\'e inequalities are established) and Section~\ref{s7VII14.1} (where the required weighted Korn inequalities are proved), as well as Appendix \ref{s2X15.1} (where nonexistence of KIDs satisfying the required boundary conditions is established).

The reader is invited to consult~\cite{Corvino,CorvinoSchoen,CorvinoHuang,CCI2} for the original papers as well as further applications of  gluing constructions in general relativity.

\section{Definitions, notations and conventions}
 \label{sec:def}

We use the summation convention throughout, indices are lowered with $g_{ij}$
and raised with  its inverse $g^{ij}$.

We will have to frequently control the asymptotic behavior of the objects at hand. Given a tensor field $T$ and a function $f$,  we will write
$$
T=O_g(f),
$$
when there exists a constant $C$ such that the $g$-norm of $T$ is dominated by $Cf$.

\subsection{Asymptotically hyperbolic manifolds}
 \label{ss20XI15.13}

Let $\overline{M}$ be a smooth  $n$-dimensional
manifold with boundary $\partial {M}$. Thus
$M:=\overline{M}\backslash\partial{M}$ is a   manifold
without boundary.
(We use the
analysts' convention that a manifold $M$ is always open; thus a
manifold $M$ with non-empty boundary $\partial M$ does not contain
its boundary; instead, $\bM= M \cup
\partial M$ is a manifold with boundary in the differential
geometric sense.) Unless explicitly specified otherwise \emph{no}
conditions on $M$ are made  --- \emph{e.g.}\/ that $\partial M$ or $\overline M$
are compact, except that $M$ is a smooth manifold; similarly no conditions
\emph{e.g.}\/ on completeness of $(M,g)$, or on its radius of
injectivity, are made.

The boundary $\partial {M}$
will play the role of a \emph{conformal boundary at infinity}
of $M$.  Throughout the symbol $\definingz $  will denote a defining function for
$\partial M$, that is a non-negative smooth function on
$\overline{M}$, vanishing precisely on $\partial M$, with
$d\definingz $ never vanishing  there.

A metric $g$ on $M$ will be called \emph{asymptotically hyperbolic}, or AH,  if there exists a smooth
defining function $\definingz $ such that $\tilde g=z^2 g$ extends by continuity to a metric on $\overline M$,
with $|d\definingz|^2_{\tilde g}=1+O(z)$.
The terminology is motivated by the fact that, under rather weak differentiability hypotheses, the sectional
curvatures of $g$ tend to $-1$ as $\definingz $ approaches zero; cf., e.g.,
\cite{Mazzeo:hodge}. We will typically assume more differentiability of $\tilde g$, as will be made precise when need arises. In particular $\tilde g$ will be assumed to be differentiable up-to-boundary.

It is well known that, near infinity, for any sufficiently
differentiable asymptotically hyperbolic metric $g$ we may choose the
defining function $\definingz $ to be the $\tg $-distance to the
boundary, and that there exist local coordinates $(z,\theta^A)$ near $\partial M$ so that  on $(0,\epsilon)\times\partial M$ the
metric takes the form
\bel{metrichgold}
 g=
  \definingz ^{-2}(d\definingz ^2+{\widetilde g}_{AB}(\definingz, \theta^C )d\theta^Ad\theta^B) \,,
\ee
where $z$ runs over the first factor of $(0,\epsilon)\times\partial M$, $\theta^A$ are local coordinates on $
\partial M$, and where $\{\widetilde{g}_{AB}(\definingz, \cdot)d\theta^Ad\theta^B\}_{\definingz \in[0,\epsilon]}$ is a family of
uniformly equivalent, metrics on $\partial M$.

\begin{Definition}
   \label{D21IX15.1} Let $\zg$ be an asymptotically hyperbolic metric such that $\wzg$ is smooth on $\overline M$ and
let $W$ be a function space.
An asymptotically hyperbolic metric $g$ will be said to be of $M_{\zg+W} $ class if  $g-\zg\in W$.
\end{Definition}

We will be typically interested in $M_{\zg+W}$ metrics with $W=C^k_{1,z^{-\sigma}}$ with $\sigma>0$, see Section~\ref{SwSs}
 for notation. In local coordinates as above such metrics decay to the model metric as $z^{\sigma-2}$, or as $z^{\sigma}$ in $g$-norm,
 are continuously compactifiable, with derivatives satisfying uniform weighted estimates near the boundary. Further, there exists then a constant $C$ such that
\bel{gestimb}
  |g-\zg|_{\zg}+|\znabla g|_{\zg}+...+|\znabla^{(k)}g|_{\zg}\leq
 C\definingz ^\sigma\,,
\ee
where the norm and covariant derivatives $\znabla$ are defined by $\zg$.
 For $\R\ni\sigma>k\ge 1$, $k\in \N$ and $g\in M_{\zg+C^k_{1,z^{-\sigma}}}$ the conformally  rescaled metrics   $z^2 g$  can be extended  to the conformal boundary of $M$, with the extension belonging to the  $C^{\lfloor\sigma\rfloor}$-differentiability class.
%
%
%

\subsection{Weighted Sobolev and  weighted H\"older spaces}\label{SwSs}

Let $\phi$ and $\psi$ be two smooth strictly
positive functions on
$M$. The function $\psi$ is used to control the growth of the fields involved near boundaries or in the asymptotic regions, while $\phi$ allows the growth to be affected by derivation.
 \ptcr{added}
For $k\in \Nat$ let $\Hkpp(g) $ be the space of $H^k_\loc$
functions or tensor fields such that the norm\footnote{The reader
is referred to~\cite{Aubin,Aubin76,Hebey} for a discussion of
Sobolev spaces on Riemannian manifolds.}
\be \label{defHn}
 \|u\|_{\Hkpp (g)}:=
 (\int_M(\sum_{i=0}^k \phi^{2i}|\nabla^{(i)}
 u|^2_g)\psi^2 d\mu_g )^{\frac{1}{2}}
\ee
is finite, where
$\nabla^{(i)}$ stands for the tensor $\underbrace{\nabla ...\nabla
}_{i \mbox{ \scriptsize times}}u$, with $\nabla$ --- the
Levi-Civita covariant derivative of $g$; we assume throughout that
the metric is at least $W^{1,\infty}_\loc$; higher
differentiability will be usually indicated whenever needed.

For
$k\in \Nat$ we denote by $\zHkpp $ the closure in $\Hkpp$ of the
space of $H^k$ functions or tensors which are compactly (up to a
negligible set) supported in $M$, with the norm induced from
$\Hkpp$.
The $\zHkpp $'s are Hilbert spaces with the obvious scalar product
associated to the norm \eq{defHn}. We will also use the following
notation
$$
\quad \zHk  :=\zHk  _{1,1}\,,\quad
L^2_{\psi}:=\zH^0_{1,\psi}=H^0_{1,\psi}\,,
$$ so that $L^2\equiv \zH^0:=\zH^0_{1,1}$.

For $\phi$ and $\varphi$  --- smooth strictly positive functions
on M, and for $k\in\N$ and $\alpha\in [0,1]$, we define
$C^{k,\alpha}_{\phi,\varphi}$ the space of $C^{k,\alpha}$
functions or tensor fields  for which the norm
$$
\begin{array}{l}
\|u\|_{C^{k,\alpha}_{\phi,\varphi}(g)}=\sup_{x\in
M}\sum_{i=0}^k\Big(
\|\varphi \phi^i \nabla^{(i)}u(x)\|_g\\
 \hspace{3cm}+\sup_{0\ne d_g(x,y)\le \phi(x)/2}\varphi(x) \phi^{i+\alpha}(x)\frac{\|
\nabla^{(i)}u(x)-\nabla^{(i)}u(y)\|_g}{d^\alpha_g(x,y)}\Big)
\end{array}
$$ is finite.

\section{Asymptotically hyperbolic metrics and initial data sets}
 \label{s18IX15.1}

In this section we will construct metrics with ``interpolating scalar curvature'', in the sense of \eq{21IX15.1} below,  in three geometric setups.

As already pointed out, the symbol $z$  denotes a defining function for $\partial M$ as in \eq{metrichgold}. In particular $z$ will be smooth on $\overline M$, with $z>0$ on $M$, vanishing precisely on $\partial M$.

We will be gluing together metrics which are close to each other on a set $\Omega\subset M$. We will use the symbol $x$ to denote a defining function for $ \overline{\partial \Omega\cap M}$ (closure in $\overline M$), thus $x$ is smooth-up-to boundary on $\overline \Omega$,   $x>0$ on $\Omega$, with $x=0$ precisely on $\overline{\partial \Omega\cap M}$, and with $dx$ nowhere zero on $\overline{\partial \Omega\cap M}$.
(The reader is warned that in our applications the boundary of $\Omega$ in $M$ and the boundary of $\Omega$ in $\overline M$ do not coincide; as an example, see  the set $\Omega$ of \eq{29II16.1} below.)

Throughout this section we use weighted functions spaces with weights
\bel{21IX15.5}
\phi=\frac x\rho\,,\;\;\;\psi= x^az^b\rho^c
\,,
\
\mbox{where} \
  \rho=\sqrt{x^2+z^2}
 \,.
\ee

A rather simple situation occurs when $\Omega$ is a half-annulus centered at the conformal boundary, this is considered in Section~\ref{ss18IX15.1}.

More generally, we consider sets $\Omega$ such that the closure $\overline{\partial \Omega \cap M}$ in $\overline M$ of $\partial \Omega\cap M$ is smooth and compact in the conformally compactified manifold, with two connected components.
This is described in
 \SSection~\ref{ss18IX15.2}.

Finally,  we can glue-in an exactly hyperbolic region to any asymptotically hyperbolic metric in a half-ball near the conformal boundary. This is described in  \SSection~\ref{ss18IX15.3}.

In \SSection~\ref{ss18IX15.7} we turn our attention to initial data for vacuum Einstein equations. We show there how to extend the proofs to  such data.

In  \SSection~\ref{ss18IX15.4} we use the results of   \SSection~\ref{ss18IX15.3} to show how to make a Maskit-type gluing of two asymptotically hyperbolic manifolds.

\erw{added}Throughout this
section, we let $\mathring g$
be any fixed background asymptotically hyperbolic metric on M, as explained in
\SSection~\ref{ss20XI15.13}.

\subsection{A half-annulus with nearby metrics}
 \label{ss18IX15.1}

While our gluing construction will apply to considerably more general situations, in this
section we describe a simple setup  of interest. We choose the underlying manifold to be the ``half-space model'':
$$
 \mcH
  =\{(z,\theta)|\ z>  0, \theta \in\R^{n-1}\}\subset \R^n
  \,,
  \quad
 \overline{ \mcH}
  =\{(z,\theta)|\ z\ge  0, \theta \in\R^{n-1}\}
  \,.
$$
We will glue together metrics asymptotic to each other while interpolating their respective
scalar curvatures.
The first metric will be assumed to be of $M^{k+4}_{\zg+C^1_{1,z}}$-differentiability class and therefore, in suitable local coordinates, will take the form
\bel{bgenerale}
 g =\frac {1} {z^2}
  \big(
   {(1+O(z))dz^2 + \underbrace{h_{AB}(z,\theta^C)d\theta^A d\theta^B}_{=:h(z)}}
    +O(z)_Adz\, d\theta^A
   \big)
 \,,
\ee
where $h(z)$ is a continuous family of Riemannian metrics on $\R^{n-1}$.

We define \erw{typo corrected}
$$
 B_\lambda  := \{z>0\,,\    \underbrace{\sum_i(\theta^i)^2}_{=:|\theta|^2} + z^2 <\lambda^2\}
 \,,\;\;\;A_{\epsilon,\lambda}=B_{\lambda}\setminus \overline{B_\epsilon}
 \,.
$$
The gluing construction will  take place in the region
\bel{29II16.1}
 \Omega=A_{1,4}
 \,.
\ee
We take $x$ to be any smooth function on $ \Omega$ which equals the $z^2\mathring g$-distance
to $\{|\theta|^2 + z^2=1\}\cup \{|\theta|^2 + z^2=4 \}$ near this last set.

In fact  we only need the  metric $ g$ to be defined on, say, $B_5$.

Let $\hg$ be a second metric   on $B_5$ which is close to $ g$ in $C^{k+4}_{1,z^{-\sigma}}(A_{1,4})$.
Let $\chi$ be a smooth non-negative function on  $\mcH$, equal to $1$  on $\mcH\setminus B_3$, equal to zero
on $B_2$, and positive on $\mcH\backslash \overline{B_2}$.
Following~\cite{Erwanninterpolating}, let
\bel{21IX15.1}
g_\chi=\chi \hg+(1-\chi) g\,,
\qquad
R_\chi=\chi R(\hg)+(1-\chi)R( g)
 \,.
\ee
Our result in this context is a special case of Theorem~\ref{T21IX15.1} in the next section (see also the remarks after the theorem there):

\begin{theorem}
 \label{propRcassimple}
Let $n/2<k<\infty$, $b\in[0,\frac{n+1}2]$, $\sigma>\frac{n-1}2+b$, suppose that $g\in M_{\zg+C^{k+4}_{1,z^{-1}}}$.
For all 
$\hg$ close enough to $ g$ in $C^{k+4}_{1,z^{-\sigma}}(A_{1,4})$
there exists a
two-covariant symmetric tensor field $h$ in $C^{k+2-\lfloor n/2\rfloor}(\mcH)$, vanishing outside of $A_{1,4}$, such that the tensor field $g_\chi+h$ defines a metric satisfying
\bel{solmodker}
 R(g_\chi+h)=R_\chi
 \,.
\ee
\end{theorem}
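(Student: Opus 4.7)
\proof[Proof proposal]
The plan is to follow the Corvino--Schoen--Carlotto--Schoen gluing paradigm, adapted to the weighted function spaces dictated by the geometry of the annular region $\Omega=A_{1,4}$ that touches the conformal boundary at two boundary faces (where $x\to 0$) and is asymptotically hyperbolic in the interior (where $z\to 0$). Writing $\delta h = L_{g_\chi}^{*}N$ for an unknown function $N$, where
\[
 L_g^{*} N = -(\Delta_g N)\,g + \mathrm{Hess}_g N - N\,\mathrm{Ric}(g)
\]
is the formal $L^2$--adjoint of the linearisation $L_g$ of the scalar curvature at $g$, the equation $R(g_\chi+h)=R_\chi$ becomes a fourth--order, nonlinear, formally self--adjoint PDE for $N$:
\[
 F(N):=R\bigl(g_\chi+L_{g_\chi}^{*}N\bigr)-R_\chi = 0\,.
\]
The ansatz is designed so that the principal part $L_{g_\chi}\circ L_{g_\chi}^{*}$ is self--adjoint and non--negative, with kernel equal to the space of KIDs (here: lapse functions $N$ with $L_{g_\chi}^{*}N=0$, i.e.\ static KIDs).

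First, I would set up the linear theory. Let $\Omega=A_{1,4}$, let $x$ be a boundary defining function for $\overline{\partial\Omega\cap M}$ in $\overline M$, and work in the Hilbert space $\Lpsitwog$ defined by the weights $\phi=x/\rho$, $\psi=x^{a}z^{b}\rho^{c}$ of \eqref{21IX15.5}. The positive power of $x$ in $\psi$ forces rapid vanishing of $N$ at $\{x=0\}$, which is precisely the boundary across which $h$ must vanish in order for $h$ to extend by zero outside $\Omega$. For the bilinear form associated with $L\circ L^{*}$ I would use integration by parts together with:
\begin{itemize}
\item the triply weighted Poincar\'e inequality (Theorem~\ref{T21XI15.1}), which controls $\|N\|_{\Lpsig}$ by $\|\nabla N\|_{\Lpsig}$ in the chosen weights;
\item the triply weighted Korn inequality (Theorem~\ref{TC22V15}), which controls $\|\mathrm{Hess}\, N\|_{\Lpsig}$ and hence $\|L^{*}N\|_{\Lpsig}$ from below;
\item the nonexistence of KIDs satisfying the boundary conditions encoded by $\psi$ (Appendix~\ref{s2X15.1}), which removes the only potential obstruction to coercivity.
\end{itemize}
Combining these ingredients and choosing the weight exponents in the admissible range ($b\in[0,(n+1)/2]$, $\sigma>(n-1)/2+b$, with $c$ fixed appropriately in terms of $b$) I expect to obtain a coercivity estimate of the form
\[
 \|N\|_{\Lpsitwog}^{2} \;\le\; C\bigl\langle L_{g_\chi}L_{g_\chi}^{*}N,\,N\bigr\rangle_{\Lpsig}\,,
\]
from which the Lax--Milgram theorem furnishes, for every right--hand side $f$ in the dual space, a unique weak solution of $L_{g_\chi}L_{g_\chi}^{*}N=f$. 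Standard elliptic regularity (adapted to weighted spaces) upgrades $N$ to $\Hkpp$ for $k>n/2$, which by Sobolev embedding and the loss of $\lfloor n/2\rfloor$ derivatives yields $h=L^{*}N\in C^{k+2-\lfloor n/2\rfloor}$.

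Next, I would pass from the linear to the nonlinear problem. Writing
\[
 F(N)=F(0)+L_{g_\chi}L_{g_\chi}^{*}N + Q(N)\,,
\]
the error term $Q(N)$ collects the quadratic--and--higher remainders of $R(g_\chi+L^{*}N)$ in $L^{*}N$, and $F(0)=R(g_\chi)-R_\chi$. A direct computation, using the explicit form of $g_\chi$ and the fact that $g_\chi=\hg$ on $B_2$ and $g_\chi=g$ on $\complement B_3$, shows that $F(0)$ is supported in $\{2\le |\theta|^{2}+z^{2}\le 3\}$ and, by the closeness of $\hg$ and $g$ in $C^{k+4}_{1,z^{-\sigma}}(A_{1,4})$, can be made arbitrarily small in the relevant weighted norm. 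I would then solve the equation by a contraction mapping argument: define
\[
 \Phi(N) := -(L_{g_\chi}L_{g_\chi}^{*})^{-1}\bigl(F(0)+Q(N)\bigr)
\]
on a small ball in $\Lpsitwog$, verify using the quadratic nature of $Q$ together with the weighted H\"older continuity of the nonlinearity that $\Phi$ contracts for $\hg$ sufficiently close to $g$, and take $h=L_{g_\chi}^{*}N$ with $N$ the resulting fixed point. The positive power of $x$ in $\psi$ guarantees that $N$, and hence $h$, extend by zero across $\partial\Omega\cap M$ to all of $\mcH$.

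The main analytic obstacle, and the heart of the argument, is the coercivity estimate: proving it requires the simultaneous control of the $L^{2}$, first derivative, and second derivative weighted norms of $N$ in a region where the weights degenerate both at the conformal infinity $\{z=0\}$ and at the two interior gluing interfaces $\{x=0\}$, with the further delicate issue that the required weighted inequalities are sharp in the exponents of the weights and pass right through the range of indicial exponents of $L\circ L^{*}$ on hyperbolic space. This is precisely the content of Theorems~\ref{T21XI15.1} and~\ref{TC22V15}, together with the KID--nonexistence appendix. Once these three inputs are in hand, the overall structure of the proof is standard; the remaining work is bookkeeping of weight exponents to guarantee that $\sigma>(n-1)/2+b$ places the solution in the correct space and that the contraction mapping closes.
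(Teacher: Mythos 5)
Your overall strategy is the one the paper uses (it delegates the machinery to \cite[Theorem~3.7]{ChDelay}): interpolate the scalar curvatures, make a Corvino--Schoen ansatz through the adjoint linearised operator, get coercivity from the triply weighted Poincar\'e and Korn-type inequalities plus the absence of KIDs, and close with an implicit-function/contraction argument. But there is a genuine gap in your linear step, and it sits exactly at the point that makes this construction work. Under the paper's convention \eq{defHn}, the weight $\psi=x^az^b\rho^c$ multiplies the integrand, so a \emph{large positive} power of $x$ makes the norm \emph{weaker} near the interface $\{x=0\}$: elements of $\mathring H^{k}_{\phi,\psi}$ are allowed to grow like inverse powers of $x$, and indeed the solution of the linearised problem satisfies $\delta N=o(x^{-a-n/2}z^{-b}\rho^{-c+n/2})$, see \eq{17IX15.2}. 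So your claim that membership of $N$ in $\Lpsitwog$ forces $N$, and hence $h=L^*_{g_\chi}N$, to vanish at $\{x=0\}$ is false in these conventions; with the bare ansatz $h=L^*_{g_\chi}N$ nothing guarantees that $h$ extends by zero across $\partial\Omega\cap M$, let alone in $C^{k+2-\lfloor n/2\rfloor}$.

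Relatedly, the quadratic form that the weighted inequalities actually make coercive is $\int_\Omega|\phi^2(\nabla\nabla N-\Delta N\,g-N\,\Ric(g))|^2\psi^2\,d\mu_g$ (Corollary~\ref{C23V15}); by Riesz/Lax--Milgram this inverts the \emph{weighted} composition, i.e.\ the equation $P_{g_\chi}\big(\phi^4\psi^2P^*_{g_\chi}N\big)=\psi^2\,\delta\rho$ of \eq{23VI14.3h}, not $LL^*N=f$. Your displayed estimate $\|N\|^2\le C\langle LL^*N,N\rangle_{L^2_\psi}$ does not follow from these inequalities, because integrating by parts in the weighted inner product necessarily produces derivatives of $\psi^2\phi^4$. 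The cure --- and the key device of the paper --- is to put the weights into the ansatz itself, $h=\psi^2\phi^4P^*_{g_\chi}u$: this simultaneously matches the coercive bilinear form and supplies the explicit factor $x^{2a+4}z^{2b}\rho^{2c-4}$ that converts the permitted growth of $u$ into the decay $h=o_g(x^{a-n/2+2}z^b\rho^{c+n/2-2})$ of \eq{13VI14.5h+}, via the corner estimate of Proposition~\ref{P15IX15.1}; it is this decay, for $a$ large, that justifies extension by zero with $C^{k+2-\lfloor n/2\rfloor}$ regularity, and it is the interplay of this ansatz with the KID asymptotics that produces the restriction $b\le\frac{n+1}{2}$. (A minor point: for the scalar-curvature statement the second-order estimate Corollary~\ref{C23V15} is the relevant input; the Korn inequality of Theorem~\ref{TC22V15} enters through its proof and through the full constraint case of Theorem~\ref{Thefulltheorem}.) With the weighted ansatz inserted, the remainder of your outline --- smallness of $R_\chi-R(g_\chi)$, supported in the transition annulus, and a contraction/implicit function argument --- is indeed how the proof is completed.
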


\begin{Remark}
{\rm
The construction invokes weighted Sobolev spaces on $A_{1,4}$ with $\phi =x/\rho$, $\psi =x^a z^b \rho^c$, where $a$ and $c$ are chosen large as determined by $k$, $n$ and $\sigma$.
The tensor field $h$ given by Theorem~\ref{propRcassimple} satisfies
$$h
 \in \psi^2\phi^2\mathring H^{k+2}_{\phi,\psi}(g_\chi)
 \,,
$$
and  there exists a constant $C$ independent of $\hat g$ such that
\bel{21IX15.11}
 \|h\|_{\psi^2\phi^2\mathring H^{k+2}_{\phi,\psi}(g_\chi)} \le C
    \|R(\hat g) - R(g)
        \|_{ \mathring H^{k}_{1,z^{-b}}(g_\chi)}
 \,.
\ee
The reader is referred to Remark~\ref{R19IX15.1} below for a description of the behaviour of $h$ near the corner $\rho=0$.
}
\end{Remark}

\subsection{A fixed region with nearby metrics}
 \label{ss18IX15.2}

As already pointed out, Theorem~\ref{propRcassimple} is a special case of Theorem~\ref{T21IX15.1} below, which applies to the following setup:

Consider an asymptotically hyperbolic manifold $(M,g)$. Let    $\Omega\subset M$  be a  domain which is relatively compact in $\overline M$ and such that the boundary of $ \Omega$ in $M$
 is the union of two smooth hypersurfaces $\Sigma$ and $\hat \Sigma$. Both $\Sigma$ and $\hat \Sigma$ are assumed to meet  the conformal boundary $\partial M$ smoothly and transversally, with $\overline{\Sigma}\cap \overline{\hat \Sigma} = \emptyset$ (closure in $\overline M$). Furthermore, we require that $\Omega$ lies to one side of each $\Sigma$ and $\hat \Sigma$. (In the setup of the previous section we have $ M= \mcH$, $\Omega =  A_{1,4}$, with  $\Sigma$ and $\hat \Sigma$ being the open half-spheres forming the connected components of $\partial A_{1,4}\cap \mcH$.)

As already pointed out, we denote by $x$ a smooth-up-to-boundary defining function for $\overline{\partial \Omega\cap M}$, strictly positive on $\Omega$.

Recall that the linearized scalar curvature operator $P=P_g$ is
$$
P_g h := DR(g)h=-\nabla^k\nabla_k(tr_g
\;h)+\nabla^k\nabla^lh_{kl}-R^{kl}h_{kl}
 \,,
$$
so that its $L^2$ formal adjoint reads
\bel{Pstar}
\begin{array}{lllll}
    P^*_gf&=&[DR(g)]^*f&=&-\nabla^k\nabla_k fg+\nabla\nabla
    f-f\,\Ric(g)
 \,.
\end{array}
\ee

We consider two asymptotically hyperbolic metrics $g$ and $\hg$ defined on $\Omega$.
Let $\chi$ be a smooth non-negative function on $\Omega $ which is one near $\hat\Sigma$ and is zero near $  \Sigma$.
Assuming that the metrics $g$ and $\hg$ are close enough to each other on $\Omega $ in a $z$-weighted norm, we can glue them together with interpolating curvature as in \eq{21IX15.1}:

\begin{theorem}
 \label{T21IX15.1}
Let $n/2<k<\infty$, $b\in[0,\frac{n+1}2]$, $\sigma>\frac{n-1}2+b$,  $\phi= x/\rho$, $\psi = x^a z^b \rho^c$, suppose that $g\in M_{\zg+C^{k+4}_{1,z^{-1}}}$.
For all real numbers $a$ and $c$ large enough and for all
$\hg$ close enough to $ g$ in $C^{k+4}_{1,z^{-\sigma}}(\Omega )$
there exists a unique two-covariant symmetric tensor field of the form
$$
 h=\psi^2\phi^{4} P_{g_\chi}^*u
 \in \psi^2\phi^2\mathring H^{k+2}_{\phi,\psi}(g_\chi)= \mathring H^{k+2}_{\phi,\psi^{-1}\phi^{-2}}(g_\chi)
$$
such that  $g_\chi+h$ defines a metric satisfying
\bel{solmodker2}
 R(g_\chi+h)=R_\chi
 \,.
\ee
Moreover there exists a constant $C$ such that
\bel{21IX15.13}
 \|h\|_{\psi^2\phi^2\mathring H^{k+2}_{\phi,\psi}(g_\chi)} \le C
    \|R(\hat g) - R(g)
        \|_{ \mathring H^{k}_{1,z^{-b}}(g_\chi)}
 \,.
\ee
The tensor field $h$ vanishes at $\partial \Omega$ and can be $C^{k+2-\lfloor n/2\rfloor}$-extended by zero across $\partial \Omega$.
\end{theorem}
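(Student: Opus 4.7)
The plan is to reformulate the prescribed--scalar--curvature problem via the standard Corvino--Schoen underdetermined ansatz. Expanding the scalar curvature around $g_\chi$,
\[
 R(g_\chi+h) = R(g_\chi) + P_{g_\chi} h + Q_{g_\chi}(h),
\]
where $Q_{g_\chi}(h)$ is at least quadratic in $h$, the equation \eqref{solmodker2} becomes
\[
 P_{g_\chi} h = \bigl(R_\chi - R(g_\chi)\bigr) - Q_{g_\chi}(h).
\]
The source $R_\chi - R(g_\chi)$ is supported where $d\chi\neq 0$, stays away from $\partial\Omega$, and its $\mathring H^{k}_{1,z^{-b}}(g_\chi)$-norm is controlled by $\|\hat g - g\|_{C^{k+4}_{1,z^{-\sigma}}(\Omega)}$ (the condition $\sigma>\tfrac{n-1}2+b$ ensures the correct behaviour near the conformal boundary). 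To solve the underdetermined elliptic equation for $h$ I would look for $h$ in the image of the formal adjoint,
\[
 h = \psi^2\phi^{4}\, P_{g_\chi}^{\ast} u,
\]
which turns the problem into a formally self-adjoint fourth-order equation for the scalar $u$:
\[
 Lu := P_{g_\chi}\!\bigl(\psi^2\phi^{4}\, P_{g_\chi}^{\ast} u\bigr) = \bigl(R_\chi - R(g_\chi)\bigr) - Q_{g_\chi}(\psi^2\phi^{4} P_{g_\chi}^{\ast} u).
\]

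The linear step is to show that $L$ is an isomorphism between the appropriate weighted Sobolev spaces, which reduces to the coercivity estimate
\[
 \langle Lu,u\rangle_{L^2(g_\chi)} \;=\; \int_{\Omega}\psi^{2}\phi^{4}\,|P_{g_\chi}^{\ast} u|^{2}_{g_\chi}\,d\mu_{g_\chi} \;\geq\; C\,\|u\|_{\mathring H^{2}_{\phi,\psi^{-1}\phi^{-2}}(g_\chi)}^{2}.
\]
This is exactly where the triply-weighted machinery enters: the Korn-type inequality of Theorem~\ref{TC22V15} bounds the second derivatives of $u$ by $|P_{g_\chi}^{\ast} u|$ modulo the finite-dimensional space of KIDs (elements of $\ker P_{g_\chi}^{\ast}$), and the triply-weighted Poincar\'e inequality of Theorem~\ref{T21XI15.1} absorbs the zeroth- and first-order contributions into the right-hand side. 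The KID kernel is killed by the boundary conditions imposed by the weights $\psi^{-1}\phi^{-2}$ at $\partial\Omega$: the required nonexistence in this weighted class is established in Appendix~\ref{s2X15.1}. Lax--Milgram on the Hilbert space obtained by completing $C^\infty_c(\Omega)$ with respect to $\|P_{g_\chi}^{\ast}\cdot\|_{L^{2}(\psi^{2}\phi^{4}\,d\mu_{g_\chi})}$ then produces a weak solution; standard weighted elliptic regularity (applied away from $\partial\Omega$ in the usual way, and near $\partial\Omega$ using the graded structure of the weights) promotes it to $u$ such that $h=\psi^{2}\phi^{4} P_{g_\chi}^{\ast} u \in \psi^{2}\phi^{2}\mathring H^{k+2}_{\phi,\psi}(g_\chi)$, with the quantitative bound \eqref{21IX15.13}.

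The nonlinear step is a Banach contraction argument. Define
\[
 \Phi(u) := L^{-1}\!\bigl[\,(R_\chi - R(g_\chi)) - Q_{g_\chi}(\psi^{2}\phi^{4} P_{g_\chi}^{\ast} u)\,\bigr],
\]
and show that $\Phi$ is a contraction on a small ball in the weighted space, of radius proportional to $\|R(\hat g)-R(g)\|_{\mathring H^{k}_{1,z^{-b}}(g_\chi)}$. The hypothesis $k>n/2$ gives a weighted Sobolev algebra property, so that $Q_{g_\chi}$, being at least quadratic, satisfies
\[
 \|Q(h_1)-Q(h_2)\|_{\mathring H^{k}_{1,z^{-b}}} \;\lesssim\; \bigl(\|h_1\|+\|h_2\|\bigr)\,\|h_1-h_2\|
\]
in the relevant weighted norms, which gives the contraction once $\hat g$ is close enough to $g$. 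Smallness of $h$ in $L^\infty$ (via Sobolev embedding) also guarantees that $g_\chi+h$ stays positive-definite. The strong $x$-weight in $\psi^{-1}\phi^{-2}=x^{-a-2}z^{-b}\rho^{c+2}$ combined with Sobolev embedding forces $h$, together with derivatives up to order $k+2-\lfloor n/2\rfloor$, to vanish on $\partial\Omega\cap M$, so that $h$ extends by zero in the claimed differentiability class.

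The main obstacle is the coercivity estimate near the corner $\{\rho=0\}=\overline{\partial\Omega\cap M}\cap\partial M$, where both defining functions $x$ and $z$ vanish simultaneously. In the integration by parts producing $\langle Lu,u\rangle$, commutator terms carry derivatives of the weight $\psi^{2}\phi^{4}=x^{2a+4}z^{2b}\rho^{2c-4}$, and controlling them uniformly down to the corner requires precisely the delicate balancing between the three weight powers $a$, $b$, $c$ that is built into the triply-weighted Poincar\'e and Korn inequalities of Sections~\ref{s18IX15.6} and~\ref{s7VII14.1}; this is why $a$ and $c$ must be taken sufficiently large depending on $k$, $n$, $b$ and $\sigma$.
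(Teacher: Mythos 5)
Your proposal follows essentially the same route as the paper: the authors solve \eq{solmodker2} by invoking the weighted Corvino--Schoen machinery of \cite[Theorem~3.7]{ChDelay}, whose internal content is precisely your scheme (the ansatz $h=\psi^2\phi^4P^*_{g_\chi}u$, coercivity of the quadratic form, a variational solution of the linearized problem, then a fixed-point/implicit-function argument), so that their proof consists of verifying the hypotheses of that theorem: the structural conditions on the weights $\phi=x/\rho$, $\psi=x^az^b\rho^c$, the absence of static KIDs in the weighted class (Appendix~\ref{s2X15.1}, which is exactly where $b\le\frac{n+1}2$ enters), the corner asymptotics of Appendix~\ref{A12VII15.1} that give the decay permitting the $C^{k+2-\lfloor n/2\rfloor}$ extension by zero, and the differentiability of the nonlinear map (Appendix~\ref{A21XI15.1}). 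Two corrections to your sketch: the coercivity estimate should read $\int_\Omega\psi^2\phi^4|P^*_{g_\chi}u|^2\,d\mu_{g_\chi}\ge C\|u\|^2_{\mathring H^{2}_{\phi,\psi}}$ with the (trivially shifted) weight $x^{a-2}z^b\rho^{c+2}$ of Corollary~\ref{C23V15} --- the space $\mathring H^{k+2}_{\phi,\psi^{-1}\phi^{-2}}$ is where $h$, not $u$, lives; and for the scalar-curvature theorem the estimate replacing \cite[Proposition~3.1]{ChDelay} is Corollary~\ref{C23V15} for the second-order operator $\phi^2 P^*_{g_\chi}$ acting on functions, whereas the Korn inequality of Theorem~\ref{TC22V15} concerns $S(Y)$ for vector fields and is used as an ingredient in proving that corollary (and directly only for the full constraint operator in Theorem~\ref{Thefulltheorem}), rather than in the way you attribute it.
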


\begin{remark}{\rm
 \label{R19IX15.1}
Using a weighted Sobolev embedding we find
$$
 h\in \psi^2\phi^2\mathring H^{k+2}_{\phi,\psi}(g_\chi)
 \subset x^{2a+2} z^{2b} \rho^{2c-2} C^{k+2-\lfloor n/2\rfloor + \alpha}_{x^{ a+n/2} z^{ b}\rho^{ c - n/2}}
=  C^{k+2-\lfloor n/2\rfloor + \alpha}_{x^{-a+n/2-2} z^{-b}\rho^{-c -n/2+2}}
$$
where $\alpha$ is any number in $(0,1)$ when $n$ is even, and $\alpha=1/2$ when $n$ is odd, and in
fact it holds   (see Appendix~\ref{A12VII15.1})
$$
 h=o_g(x^{a-n/2+2} z^{b } \rho^{c+n/2-2})
 \,.
$$
We also note that $ \|R(\hat g) - R(g)
        \|_{ \mathring H^{k}_{1,z^{-b}} }\le C
    \|R(\hat g) - R(g)
        \|_{ \mathring C^{k}_{1,z^{-\sigma}} }$ for $\sigma>\frac{n-1}2+b$, hence we also have
\bel{21IX15.12}
 \|h\|_{\psi^2\phi^2\mathring H^{k+2}_{\phi,\psi}(g_\chi)}  \le C^2
    \|R(\hat g) - R(g)
        \|_{ \mathring C^{k}_{1,z^{-\sigma}}(g_\chi)}
 \,.
\ee
We finally note that $\psi^2\phi^2\mathring H^{k+2}_{\phi,\psi}(g_\chi)= \mathring H^{k+2}_{\phi,\psi^{-1}\phi^{-2}}(g_\chi)$, as follows from \eq{lcond} below (see also \cite{ChDelay} Appendix A).
\myqed
}
\end{remark}

\begin{remark}{\rm
 \label{R21XI15.2}
The cutoff function $\chi$ used to interpolate the scalar
curvatures can be chosen to be different from the one
interpolating the metrics.
\myqed
}
\end{remark}

\begin{remark}
 \label{R21XI15.1}
{\rm
Suppose that $\mathring g$ is an asymptotically hyperbolic metric as in~\cite{ChHerzlich,Wang}. The total energy-momentum vectors~\cite{ChHerzlich,Wang} with respect to $\zg$ of the metrics $g$ and $\hat g$ will be well defined if they both approach $\mathring g$ as $o_g(z^{n/2})$, with scalar curvatures approaching that of $\zg$ suitably fast.
The final metric will asymptote to $\zg$ as $o_g(z^{n/2})$, and therefore will have a well defined energy-momentum, if we require that $b\ge \frac n 2$; such a choice forces  $\sigma> n -\frac 12$. Note that the decay  towards $\zg$ of the metrics constructed above might be slower in the gluing region than   that of time-symmetric slices in Kottler-Schwarzschild-anti de Sitter metrics.

In our construction the fastest decay of the perturbation of the metric is obtained by setting $b=\frac{n+1}2$, this forces $\sigma>n$, in which case both $g$ and $\hat g$ must have the same energy-momentum. But a choice of $b\in [ n/2, (n+1)/2)$ allows gluing of metrics with different energy-momenta, and nearby seed metrics will lead to a glued metric with nearby total energy-momentum.
\myqed
}
\end{remark}

{\sc \noindent Proof of Theorem~\ref{T21IX15.1}:}
We want to apply~\cite[Theorem~3.7]{ChDelay}  with $K=Y=J=0$  and $\delta\rho=R_\chi-R(g_\chi)$, to solve
the equation
\bel{3X15.1}
 \underbrace{\psi^{-2}[R(g_\chi+h)-R(g_\chi)]}_{=:
 r_\chi(h)}
 =\psi^{-2} [R_\chi-R(g_\chi)]
\,,
\ee
with $g_\chi$  close to $ g$. This requires verifying the hypotheses thereof.

We start by noting  that $R_\chi-R(g_\chi)$ vanishes
near the  boundary $\{x=0\}$, is  $O(z^\sigma)$ near the boundary at infinity $\{z=0\}$, and tends to zero together with $g_\chi- g$ when
$\hg$ approaches $ g$.
The condition on $\sigma$ implies that
$$\psi^{-2} [R_\chi-R(g_\chi)]\in \mathring H^k_{\phi,\psi}
 \,.
$$

Let us verify that the weight functions \eq{21IX15.5} satisfy
the conditions (A.2) (namely \eq{lcond} below), as well as (B.1) and (B.2) of~\cite{ChDelay}.
For simplicity of the calculations below it is convenient to assume that
  \bel{26V15.1}
  \mbox{$\partial \Omega$ is $\tg$-orthogonal to $\partial M$}
  \quad
  \Longleftrightarrow
  \quad
   g(\nabla x, \nabla z)= O(z^3)
   \,.
  \ee
If this does not hold, we choose $\Omega'\subset \Omega$ for which \eq{26V15.1} holds, and we continue the construction with $\Omega$ replaced by $\Omega'$. The set $\Omega'$ can be chosen so that the cut-off function $\chi$ remains constant near the boundaries of $\Omega'$.
The tensor field  $h$ constructed on $\Omega'$ will also provide a tensor field which has the desired properties on the original $\Omega$.
 \ptcr{reworded}

Without loss of generality we can choose the defining function  $x:\overline \Omega\to\R^+$ of $\overline{\partial \Omega\cap M }$
 so that
  \bel{25V15.2}
   |dx|_\tg = 1 + O( x )
   \,.
  \ee
A calculation gives (see \eq{tracenablauV} below with $V=\nabla u=\nabla\ln\psi$)
	\bean
	\psi^{-2}\phi^2|\nabla\psi|^2&=&\frac{x^2}{\rho^2}\bigg\{
     a^2\frac{z^2}{x^2}+b^2+[c^2+ 2ca + 2bc ]\frac{z^2}{\rho^2}
\nonumber
\\
    &&
     + O(z) + a \, O(\frac{z^2}x)+2a\,b \, O(\frac{z^2}x)\bigg\}
     \,.
		\eean
Expanding the right-hand side and using
	\bel{inegazyrho}
	0 <\frac {x}\rho\leq 1\,,\qquad
0<  \frac z\rho \leq 1
\,,
	\ee
we  see that $\psi^{-2}\phi^2|\nabla\psi|^2$ is bounded. The same formula with $a=1$, $b=0$ and $c={-1}$   similarly shows that
 $|\nabla\phi|^2$
	is  bounded.

Induction, and similar calculations establish the higher-derivatives inequalities  in
\begin{equation}\label{lcond}
 |\phi^{i-1}\nabla^{(i)}\phi|_g\leq C_{i}\,,\;\;\;
|\phi^{i}\psi^{-1}\nabla^{(i)}\psi|_g\leq C_{i}\,.
\end{equation}

Condition~(B.1) of~\cite{ChDelay} is clearly verified.

For condition~(B.2)  of~\cite{ChDelay}, we recall that if $p=(x,y,z)$ is in a $\mathring g$-ball
	 centred at $p_0=(x_0, y_0,z_0)$ and of radius $\epsilon\phi(p_0)$ then the $z^2\mathring g$-distance  from $p$ to $p_0$
	is bounded up to a multiplicative constant by $\epsilon z_0\phi(p)=\epsilon {z_0x_0}/{\rho_0}$.
	In particular from (\ref{inegazyrho})
	$$
	|z-z_0|\leq \epsilon\frac{z_0x_0}{\rho_0}\leq \epsilon z_0,
	$$
	and
	$$
	|x-x_0|\leq \epsilon\frac{z_0x_0}{\rho_0}\leq \epsilon x_0.
	$$
	This proves that, on this ball, $z$ is equivalent to $z_0$ and $x$ equivalent to $x_0$, thus
	$x/\rho$ is equivalent to $x_0/\rho_0$, which  is exactly  condition~(B.2) of~\cite{ChDelay}.

\medskip

To continue, recall that elements of the kernel of the linearized scalar-curvature map are called \emph{static KIDs}.
We need to check that within our range of weights
\bea
 \label{-+23VI14.2ah}
&&
\mbox{a) there are no static KIDs in $\Habc{k+4}$ , and }
\\
&&
 \mbox{b) the solution metrics are conformally compactifiable. }
 \label{-23VI14.2bh}
\eea

Now, it is well-known that static KIDs on $\Omega$  are exactly of $\tilde g$-order $z^{-1}$, so those of order $o_{\tilde g}(z^{-1})$ vanish (see Appendix~\ref{s2X15.1}).    Hence, the requirement \eq{-+23VI14.2ah}
that there are no KIDs in the space under consideration will be satisfied when $z^{-1}\not\in \Habc{0}$; equivalently
%
\bel{13VI14.1h}
 b\le \frac{n+1}{2}
 \,.
\ee

For \eq{-23VI14.2bh} we  consider  the linearized equation, as then the implicit function theorem will guarantee an identical behaviour of the full non-linear correction
to the initial data. Hence, we consider a perturbation $ \delta \rho $ of the scalar curvature on $\Omega$.
Recall that the linearized perturbed metric $  \delta g $ is obtained from the
solution $ \delta N $ of the equation
\bel{23VI14.3h}
  P_g\phi^4\psi^2  P^*_g(\delta N )=\delta \rho \in \psi^2
\Lpsikg{k}{g}
\,,
\ee
where one sets
\bel{23VI14.3h+}
  \delta g
   = \phi^4\psi^2  P^*( \delta N)\in \psi^2
\phi^2\Lpsikg{k+2}{g}
\,.
\ee
Appendix~\ref{A12VII15.1} gives
\bel{17IX15.2}
 \delta N =
o(x^{-a-n/2} z^{-b } \rho^{-c+n/2})
 \,,
\ee
with corresponding behaviour of the derivatives:
$$
\left(\frac x\rho\right)^i\nabla^{(i)}\delta N =
o_g(x^{-a-n/2} z^{-b } \rho^{-c+n/2})
 \,.
$$
This leads to
\bean  \delta g =
  o_g(x^{a-n/2+2} z^{b } \rho^{c+n/2-2})
  \,.
\eeal{13VI14.5h+}
We conclude that \eq{-+23VI14.2ah}-\eq{-23VI14.2bh} will be satisfied for all $a$ big enough and $c\ge 2-n/2$ if and only if
\bel{13VI14.6h}
   0 \le b \le \frac{n+1}2 \,,
\ee
%
where a large value of $a$ guarantees high differentiability of the tensor field $h$ when extended by zero across $\partial \Omega$.

One can now check that the conditions on the weights $a$, $b$ and $c$ guarantee the differentiability of the
map $r_\chi$ of \eq{3X15.1}. We make some comments about this in Appendix~\ref{A21XI15.1}

To end the proof, we note that Theorem~3.7 of~\cite{ChDelay} invokes Theorem~3.4 there. As such, that last theorem assumes that the inequality (3.1) of~\cite{ChDelay} holds, as needed to apply the conclusion of Proposition~3.1 there. In our context, the required conclusions of \cite[Proposition~3.1]{ChDelay} are provided
instead by 
Corollary \ref{C23V15} below (where a trivial shift of the indices $a$, $b$ and $c$ on the weight function $\psi$ has to be performed).
\qed
%

\subsection{Exchanging asymptotically hyperbolic regions}
 \label{ss18IX15.3}

The aim of this section is to show that \emph{any two} asymptotically hyperbolic metrics sharing the same conformal structure on a  $\partial M$-neighborhood of a point $p$ belonging to the conformal boundary $\partial M$ can be glued together near  $p$.
A case of particular interest arises when one of the metrics is the hyperbolic metric, leading to a configuration,  with well defined and finite total mass if this was the case for the other metric, and with an \emph{exactly hyperbolic} metric near a  $\partial M$-open subset of the conformal boundary.

\begin{theorem}
 \label{propRcassimple2}
Let $n/2<k<\infty$,  $\sigma>\frac{n }2 $. Consider two asymptotically hyperbolic metrics $g$ and $\hg$ such that $g\in M_{\zg+C^{k+4}_{1,z^{-1}}}$ and $\hg-g\in C^{k+4}_{1,z^{-\sigma}}$. For any $p\in \partial M$ and any $ b\in\big(0,\min(\frac{n+1}2, \sigma-\frac{n-1}2)\big)$ there exist $\overline M$-neighborhoods $\mcU$
and $\mcV$ of $p$ such that $\overline\mcU \subset \mcV$ and a metric
$$
 \breve g \in C^{k+2-\lfloor n/2\rfloor}\cap C^{k+2}_{1,z^{-b}}
$$
satisfying
\bel{solmodker2+}
  \breve g|_{M \setminus \mcV} = \hg
    \,,
 \quad
 \breve g|_{\mcU} =  g
    \,,
\ee
with the Ricci scalar $R(\breve g)$ of $\breve g$ between $R(g)$ and $ R(\hg)$ everywhere.
\end{theorem}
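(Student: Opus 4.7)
My plan is to reduce the statement to Theorem~\ref{T21IX15.1} by localising around $p$ and invoking a scaling argument to make the quantitative closeness hypothesis there hold automatically. Work in half-space coordinates $(\theta,z)$ in an $\overline M$-neighbourhood of $p$, with $p$ corresponding to the origin of the conformal boundary, and for $\lambda>0$ write $B_\lambda=\{z>0,\ |\theta|^2+z^2<\lambda^2\}$ and $A_{\lambda_1,\lambda_2}=B_{\lambda_2}\setminus\overline{B_{\lambda_1}}$. I will take $\mcU=B_\epsilon$ and $\mcV=B_{2\epsilon}$ for a small parameter $\epsilon>0$ to be fixed later, so that the gluing region $\Omega=A_{\epsilon,2\epsilon}$ is the half-annulus sandwiched between the half-spheres $\Sigma=\partial\mcU\cap M$ and $\hat\Sigma=\partial\mcV\cap M$; both meet $\partial M$ transversally, as required by Theorem~\ref{T21IX15.1}.

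Introduce a smooth cutoff $\chi_\epsilon$ on $M$ which vanishes on a neighbourhood of $\mcU$ and equals $1$ outside $\mcV$, and form the convex combination $g_{\chi_\epsilon}=\chi_\epsilon\hg+(1-\chi_\epsilon)g$. The goal is to produce a symmetric $(0,2)$-tensor $h$ supported in $\Omega$, vanishing on $\partial\Omega$ and extendable by zero with $C^{k+2-\lfloor n/2\rfloor}$ regularity, such that the scalar curvature of $\breve g:=g_{\chi_\epsilon}+h$ equals the interpolated scalar curvature $R_{\chi_\epsilon}=\chi_\epsilon R(\hg)+(1-\chi_\epsilon)R(g)$. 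This is precisely the output of Theorem~\ref{T21IX15.1}, provided its closeness hypothesis on the pair $(g,\hg)$ over $\Omega$ is met, and provided the admissible range of weights there accommodates the given $b$. Note that $b\in\bigl(0,\min(\tfrac{n+1}2,\sigma-\tfrac{n-1}2)\bigr)$ is exactly the range allowed there ($b\le(n+1)/2$ and $\sigma>(n-1)/2+b$).

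To verify the closeness hypothesis, I introduce the dilation $\Phi_\epsilon(\theta,z)=(\epsilon\theta,\epsilon z)$, which maps the fixed annulus $A_{1,2}$ diffeomorphically onto $\Omega$ and preserves the model hyperbolic metric $z^{-2}(dz^2+|d\theta|^2)$. A direct computation, using the dilation invariance to absorb the coordinate factors of $\epsilon$, shows that the pulled-back metrics $\Phi_\epsilon^*g$ and $\Phi_\epsilon^*\hg$ are asymptotically hyperbolic on $A_{1,2}$ and converge to the hyperbolic model as $\epsilon\to0$; moreover
$$
\|\Phi_\epsilon^*\hg-\Phi_\epsilon^*g\|_{C^{k+4}_{1,z^{-\sigma}}(A_{1,2})}
 \le C\,\epsilon^{\sigma}\,\|\hg-g\|_{C^{k+4}_{1,z^{-\sigma}}(\Omega)}
 \,,
$$
so for $\epsilon$ sufficiently small the closeness hypothesis of Theorem~\ref{T21IX15.1} is satisfied on $A_{1,2}$. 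Apply that theorem with the rescaled cutoff $\chi_\epsilon\circ\Phi_\epsilon$ and push the resulting correction tensor forward by $\Phi_\epsilon$ to obtain $h$. Defining $\breve g$ to equal $g$ on $\mcU$, $\hg$ on $M\setminus\mcV$, and $g_{\chi_\epsilon}+h$ on $\Omega$ gives a globally defined metric with interpolated scalar curvature, and consequently with $R(\breve g)$ pointwise between $R(g)$ and $R(\hg)$.

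The upgrade from the bare $C^{k+2-\lfloor n/2\rfloor}$ regularity of Theorem~\ref{T21IX15.1} to the weighted class $C^{k+2}_{1,z^{-b}}$ asserted in the statement comes from Remark~\ref{R19IX15.1}, which gives $h=o_g(x^{a-n/2+2}z^{b}\rho^{c+n/2-2})$ with matching control on derivatives; since $a$ and $c$ may be chosen large while keeping $b$ fixed in the admissible range, the factor $z^{b}$ provides exactly the weighted decay demanded. The main technical obstacle is the scaling step: one must verify that the weights $\phi=x/\rho$ and $\psi=x^az^b\rho^c$ used in Theorem~\ref{T21IX15.1} behave well under $\Phi_\epsilon$, so that the smallness of $\hg-g$ in the weighted norm really does translate into applicability of the gluing machinery uniformly as $\epsilon\to0$. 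The invariance of the hyperbolic model under the dilation is precisely what makes this argument work; everything else is bookkeeping.
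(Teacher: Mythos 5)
Your proposal is correct and follows essentially the same route as the paper: rescale a small half-annulus around $p$ by the hyperbolic-invariant dilation $(\theta,z)\mapsto(\lambda\theta,\lambda z)$, observe that the pulled-back metrics differ by $O(\lambda^{\sigma}z^{\sigma})$ so the closeness hypothesis of Theorem~\ref{propRcassimple} (the half-annulus case of Theorem~\ref{T21IX15.1}) holds for all small $\lambda$, then push the correction tensor back. The only details the paper makes explicit that you gloss over are the preliminary normalization of the boundary coordinates at $p$ (a polynomial change of coordinates so that $z^{2}g$ osculates the flat model there, ensuring the rescaled metrics converge to the hyperbolic background) and the resulting estimate $h_\lambda=o_g(\lambda^{\sigma-b}r^{b})$, which is what yields the asserted $C^{k+2}_{1,z^{-b}}$ membership.
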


\begin{remark}
{\rm
 $R(\breve g)$ is given by an interpolation formula as in \eq{21IX15.1}, where the function $\chi $ equals one outside of $\mcV$ and vanishes in $\mcU$.
\myqed
}
\end{remark}

\begin{remark}
{\rm
In a coordinate system $(z,\theta)$ as in \eq{bgenerale}
the sets $\mcU$ and $\mcV$ will be small coordinate half-balls near $p$, and can be chosen as small as desired while preserving
a uniform ratio of their radii.
The metric $\breve g$ will be arbitrarily close to  $g$ in the sense that the norm
$
\|\breve g - g \|_{C^{k+2-\lfloor n/2\rfloor}_{1,z^{-\sigma}}} $
will tend to zero as the half-balls shrink to a point. A weighted-Sobolev estimate, as in Theorem~\ref{T21IX15.1},
for $\breve g- g$ can be read-off from the proof below.

If both metrics have a well defined hyperbolic mass, then so does $\breve g$, with the total mass of $\breve g$ approaching that of $g$ when the half-balls shrink to a point.
\myqed
}
\end{remark}

\proof
Let $r$ be a defining function for the conformal boundary such that
$r$ is the $r^2 g$-distance
to $\partial M$ near this boundary (the reason why we use the symbol $r$ for the coordinate denoted by $z$ elsewhere in this work will become apparent shortly). In particular near the boundary the metric takes the form
\bel{22IX15.11}
 g=r^{-2}(dr^2+h(r))
 \,,
\ee
where $h(r)$ is a family of Riemannian metrics on $\partial M$.

Let $ \thetaw $ denote a coordinate system on $\partial M$ centered at $p$, with $r^2g_{ij}|_pd\thetaw ^i d\thetaw ^j = \sum_i (d\thetaw ^i)^2$, and with $\partial_\thetaw   (r^2 g_{ij})|_p =0$, as can be arranged by a polynomial change of coordinates $\thetaw $.
Thus, in local coordinates, for small $r$ and $|\thetaw |$ the metric $g$ takes the form
\bea
 g
 & =
 &
\underbrace{r^{-2}\bigg( dr^2 + \sum_i (d\thetaw ^i)^2}_{=:\zg} + \big(O(r ) + O(|\thetaw |^2)\big)_{ij} d\thetaw ^i d \thetaw ^j
 \bigg)
%
\,.
\eeal{22IX15.12}
(Note that $\zg$ is the hyperbolic metric in the half-plane model.)
The metric $\hat g$ similarly takes the form \eq{22IX15.11} with $h(r)$ there replaced by $\hat h(r)$,  with
\bel{22IX15.11x}
 \hg - g= \hat h - h = O(r^{\sigma-2} )_{ij} d\thetaw ^i d\thetaw ^j)
 \,.
\ee
%
%
Let $\mcH$, $B_\delta$ and $A_{\epsilon,\delta}=B_{\delta}\setminus B_\epsilon$ be as in Section~\ref{ss18IX15.1}:
$$
 \mcH
  =\{(\theta ,z)|\ z > 0\,,\  \theta  \in\R^{n-1}\}
  \,, \
 B_\delta =   \{z>0\,,\    |\theta |^2 + z^2 <\delta
 \}
 \,,
 \
 A_{\epsilon,\delta}=B_{\delta}\setminus \overline{B_\epsilon}
 \,.
$$
Let $\phi_\lambda:A_{1,4}\longmapsto A_{\lambda,4\lambda}$ as
defined by
$$
(\thetaw, r)=\phi_\lambda(\theta,z)=(\lambda \theta,\lambda z)
 \,,
$$
and
on $B_5$ define the metrics
$$
 g_\lambda=\phi^*_\lambda  g
 \,,
 \quad
\hg_\lambda=\phi^*_\lambda \hg= g_\lambda+O_{ g_\lambda}(\lambda^\sigma z^\sigma) \,.
$$
When $\lambda$ approaches zero the metrics $g_\lambda $ and $\hg_\lambda$ uniformly approach each other in $C^{k+4}_{z,z^{-\sigma}}(A_{1,4})$, so that for all $\lambda$ small enough the result follows from Theorem~\ref{propRcassimple}.

Note that the correction tensor $h_\lambda$ obtained in this way can be estimated as
$$
h_\lambda= o_{g_{\lambda}}(\lambda^\sigma x^{a-n/2+2}z^{b}\rho^{c+n/2-2})=
o_{g_{\lambda}}(\lambda^\sigma z^{b})=o_{g}(\lambda^{\sigma-b} r^{b})\,,
$$
where in the second equalities we have used the fact that the functions $x$ and $\rho$ are bounded on $A_{1,4}$, with positive powers as $a$ and $c$ are large.
\qed

\subsection{Asymptotically hyperbolic data sets}
 \label{ss18IX15.7}

The above gluing for  scalar curvature can be extended to the full vacuum constraint operator, with or without a cosmological constant $\Lambda$.  Recall that given a pair  $(K,g)$, where $K$ is a symmetric two-tensor field and $g$ is a Riemannian metric,  the matter-momentum one form $J$ and the matter-density function $\rho$ are defined as
\be
 \label{defJrho}
 \sources(K,g)\equiv \left(
\begin{array}{c}
J\\
  \\
\rho
\end{array}
\right)
(K,g):=
\left(
\begin{array}{l}
2(-\nabla^jK_{ij}+\nabla_i\;\tr  K)\\
  \\
R(g)-|K|^2 + (\tr  K)^2-2\Lambda
\end{array}
\right),
 \ee
where the norm and covariant derivatives are defined by the metric $g$. The map $\sources$ will
be referred to as the \emph{constraint operator}.

There exist two standard settings where asymptotically hyperbolic initial data occur.
The first one is associated to the representation of hyperbolic space   as a hyperboloid  in Minkowski, in which case
$g$ is the hyperbolic metric, $K=g$ and $\Lambda=0$.
The second  one corresponds to the occurrence of hyperbolic space as a static slice in anti-de Sitter space-time, in which case
$g$ is again the hyperbolic metric, $K=0$ and $\Lambda=-\frac{n(n-1)}2$. We also note sporadic appearance of exactly hyperbolic slices of de Sitter space-time in the literature.

Choose a constant $\tau\in \R$  and  set
\bel{28IX15.1}
 \Lambda=\frac{n(n-1)(\tau^2-1)}2
 \,.
\ee
We say that a couple $(K,g)$ is  asymptotically hyperbolic, or AH,  if $g$ is an AH metric and $K$ is a symmetric two-covariant tensor field on $M$
 with $|K-\tau g|_g$
tending  to zero at the boundary at infinity. (Note that $\tau$ is \emph{not} the trace of $K$ at infinity, but $n$ times the trace. The constant $\Lambda$ is chosen so that AH initial data just defined satisfy asymptotically the vacuum constraint equations \eq{defJrho}.)

We denote by $P_{(K,g)}$ the linearization of the constraint operator at $(K,g)$.

We will make a gluing-by-interpolation. The main interest is that of vacuum data, which then remain vacuum, but there are matter models (e.g. Vlasov, or dust) where an interpolation might be of interest.
Starting  with two AH initial data sets $(K,g)$ and $(\hat K,\hat g)$, sufficiently close to each other on regions as in the previous sections, and given a cut-off function $\chi$ as before, we  define
$$(K,g)_\chi:=\chi(\hat K,\hat g)+(1-\chi)( K, g)
 \,,
$$
$$ \sources_\chi:=\chi\sources(\hat K,\hat g)+(1-\chi)
    \sources( K, g)\,,
$$
$$
 \delta\sources_\chi:=\sources_\chi
 -\sources \big((K,g)_\chi\big)
 \,.
$$
With these definitions, the gluing procedure for the constraint equations becomes a direct repetition of the one given for the scalar curvature. Letting $\Omega$ be as in Section~\ref{ss18IX15.2}, one obtains:

\begin{theorem}
 \label{Thefulltheorem}
Let $k>n/2$, $b\in[0,\frac{n+1}2]$, $\sigma>\frac{n-1}2+b$, $\phi = x/\rho$, $\psi= x^{a}y^b \rho^c$, $\tau \in \R$.  Suppose that $g\in M_{\zg+C^{k+4}_{1,z^{-1}}}$ and
$K-\tau\zg\in C^{k+3}_{1,z^{-1}}$.
For all real numbers $a$ and $c$ large enough and all
$(\hat K,\hg)$ close enough to $ (K,g)$ in $C^{k+3}_{1,z^{-\sigma}}(\Omega )\times C^{k+4}_{1,z^{-\sigma}}(\Omega )$
there exists a unique couple of two-covariant symmetric tensor field of the form
$$
  (\delta K, \delta g)
   = \Phi\psi^2 \Phi P^*_{(K,g)_\chi}(\delta Y,\delta N)\in \psi^2 \left(\phi\Lpsikg{k+2}{g}\times
 \phi^2\Lpsikg{k+2}{g} \right)
$$
such that  $(K_\chi+\delta K,g_\chi+\delta g)$ solve
\bel{fullcolle}
 \sources
    \left[(K,g)_\chi+(\delta K,\delta g)\right]
 - \sources[
(K,g)_\chi]
=\delta\sources_\chi
 \,.
\ee
Furthermore,  there exists a constant $C$ such that
\beal{estimatesolutionfullC}
 \hspace{8mm}\|(\delta K,\delta g)\|_{\psi^2\left(\phi\mathring H^{k+2}_{\phi,\psi}(g_\chi)\times \phi^2\mathring H^{k+2}_{\phi,\psi}(g_\chi)\right)}
 && \\
&& \nonumber\hspace{-3cm}\leq C \left\|
   \sources(\hat K,\hat g)-\sources (K,g)
        \right\|_{ \mathring H^{k+1}_{1,z^{-b}}(g_\chi)\times \mathring H^{k}_{1,z^{-b}}(g_\chi)}
 \,.
 \phantom{xx}
\eea
The   tensor fields $(\delta K,\delta g)$ vanish at $\partial \Omega$ and can be $C^{k+2-\lfloor\frac n2\rfloor}$-extended by zero across $\partial \Omega$.
\end{theorem}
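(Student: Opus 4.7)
\smallskip\noindent{\sc Proof proposal for Theorem~\ref{Thefulltheorem}.} The plan is to mirror the strategy used for Theorem~\ref{T21IX15.1}, with the scalar-curvature operator replaced by the full constraint operator $\sources$ and the linearised operator $P_g$ replaced by $P_{(K,g)_\chi}$. Introduce the weight $\Phi=\mathrm{diag}(\phi,\phi^2)$ acting on pairs $(Y,N)$ so that the ansatz
\[
(\delta K,\delta g)=\Phi\psi^2\Phi\,P^*_{(K,g)_\chi}(\delta Y,\delta N)
\]
leads, after substitution into \eqref{fullcolle}, to a semilinear equation of the schematic form
\[
P_{(K,g)_\chi}\,\Phi\psi^2\Phi\,P^*_{(K,g)_\chi}(\delta Y,\delta N)
=\delta\sources_\chi+\mathrm{quadratic\ remainder},
\]
to which I would apply \cite[Theorem~3.7]{ChDelay} with $J$ and $Y$ replaced by the momentum-constraint data. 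Closeness of $(\hat K,\hat g)$ to $(K,g)$ in the stated $C^{k+3}_{1,z^{-\sigma}}\!\times\!C^{k+4}_{1,z^{-\sigma}}$ topology, together with $\sigma>\tfrac{n-1}2+b$, ensures $\delta\sources_\chi\in \mathring H^{k+1}_{\phi,\psi}\times\mathring H^{k}_{\phi,\psi}$, as needed.

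The verification of the analytic hypotheses of \cite[Thm.~3.7]{ChDelay}---that is, the weight-compatibility conditions (A.2), (B.1), (B.2)---is identical to that carried out in the proof of Theorem~\ref{T21IX15.1}: the weights $\phi=x/\rho$ and $\psi=x^a z^b\rho^c$ satisfy \eqref{lcond}, the geometric condition \eqref{26V15.1}--\eqref{25V15.2} can be arranged by shrinking $\Omega$, and the $g$-metric balls of radius $\epsilon\phi(p_0)$ remain in a region where $z$ and $x$ are equivalent to their values at the centre. These verifications carry over verbatim because they concern only the geometry of $(M,g)$ and the weights, not the particular differential operator in play.

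The two genuinely new ingredients are the Korn-type coercivity estimate for $\Phi\psi P^*_{(K,g)}\Phi$ replacing the scalar-curvature one, and the absence of KIDs in the weighted space. For the former I would invoke the ``triply weighted'' Korn inequality of Theorem~\ref{TC22V15} (via the appropriate shift of the weight indices $(a,b,c)$) as packaged in Corollary~\ref{C23V15}; this replaces the conclusion of \cite[Prop.~3.1]{ChDelay} exactly as in the scalar case, and because $K-\tau g=O(z)$ in $g$-norm, the lower-order $K$-dependent terms in $P^*_{(K,g)}$ are subdominant and can be absorbed into the coercivity estimate for $a$ large enough. For the latter, KIDs on an AH background with $K$ asymptotic to $\tau g$ are again of $\tg$-order $z^{-1}$ in the appropriate sense (see Appendix~\ref{s2X15.1}), so the requirement that no KID lies in $\Habc{k+4}$ reduces once more to $b\le (n+1)/2$, which is contained in the hypotheses. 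The conformal compactifiability \eqref{-23VI14.2bh} for the linearised correction follows, as in Remark~\ref{R19IX15.1}, from the decay estimate \eqref{17IX15.2} applied componentwise to $(\delta Y,\delta N)$; taking $a$ large guarantees the $C^{k+2-\lfloor n/2\rfloor}$ extension by zero across $\partial\Omega$.

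The main obstacle I anticipate is a bookkeeping one rather than a conceptual one: verifying that the nonlinear remainder $\sources[(K,g)_\chi+(\delta K,\delta g)]-\sources[(K,g)_\chi]-P_{(K,g)_\chi}(\delta K,\delta g)$ maps the relevant weighted Sobolev space into $\mathring H^{k+1}_{\phi,\psi}\times\mathring H^{k}_{\phi,\psi}$ with the correct smallness for the fixed-point argument of \cite[Thm.~3.7]{ChDelay} to close. This amounts to tracking how the weights $\phi$ and $\psi$ interact with the quadratic and higher terms in the Taylor expansion of $\sources$---including the $|K|^2$ and $(\tr K)^2$ pieces---and checking that $k>n/2$ provides the Sobolev algebra property needed for products; these computations are analogous to those sketched in Appendix~\ref{A21XI15.1} for $r_\chi$ in \eqref{3X15.1}, with the obvious modifications required by the momentum constraint. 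Once this is settled, the estimate \eqref{estimatesolutionfullC} follows from the continuous invertibility provided by the Korn inequality together with the linear bound on the source.\qed
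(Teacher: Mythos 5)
Your proposal follows essentially the same route as the paper's own (sketched) proof: the same ansatz $(\delta K,\delta g)=\Phi\psi^2\Phi P^*_{(K,g)_\chi}(\delta Y,\delta N)$, reduction of the coercivity of $\Phi P^*_{(K,g)}$ to the two weighted inequalities (Corollary~\ref{C23V15} for the $N$-component and Theorem~\ref{TC22V15} for the $Y$-component, with the $K$-terms treated as subdominant as in \cite[Section~6]{ChDelay}), absence of KIDs via Appendix~\ref{s2X15.1}, decay of $(\delta Y,\delta N)$ and hence of $(\delta K,\delta g)$ via Appendix~\ref{A12VII15.1}, and conclusion by the implicit function theorem with the differentiability bookkeeping of Appendix~\ref{A21XI15.1}. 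The only cosmetic slip is the phrase suggesting the Korn inequality is ``packaged in'' Corollary~\ref{C23V15} — the two estimates are separate and are applied to the two slots of $P^*_{(K,g)}$ respectively — but this does not affect the correctness of the argument.
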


\begin{Remark}
   \label{R28IX15.1}
   {\rm
There are obvious analogous initial-data rephrasings of  the results of Sections~\ref{ss18IX15.3} and \ref{ss18IX15.4}, we leave the detailed spelling-out of the results to the reader.}
\myqed
\end{Remark}

 \medskip

\proof
We only sketch the proof, which is essentially identical to the scalar-curvature case. We claim that we can use the implicit function theorem to solve the equation \eq{fullcolle},
where
\bel{kgegalpyn}
(\delta K,\delta g)=\psi^2\Phi^2 P^*_{(K,g)_\chi}(Y,N)
 \,,
\ee
and where $\Phi$ is defined as
\be\label{DefPhi}
 \Phi(x,y):=(\phi x,\phi^2 y)
 \,.
\ee
For this, we first need a weighted Poincar\'e type inequality near the boundaries  for $P^*_{(K,g)}$. By a direct adaptation of the arguments
given at the beginning of~\cite[Section~6]{ChDelay},
it suffices
to prove the corresponding inequality for $\phi^2 P^*_g$ and for $\phi S$. We will see shortly that the conditions imposed
on the weights imply that the operators concerned have no kernel, and thus the desired inequalities are provided by Corollary~\ref{C23V15} below with $\mathcal F=\mathring H^2_{\phi,\psi}$ and Theorem~\ref{TC22V15}  below with ${ F}=\mathring H^1_{\phi,\psi}$.
This provides the desired inequality for tensor fields supported outside of a (large) compact set .

%

Putting the  inequalities together 
 we find that on any closed space transversal to the kernel of $P^*_{(K,g)}$ it holds that
$$
\|Y\|_{\Lpsione(g)}+ \|N\|_{\Lpsitwo(g)} \le C
\| \Phi P^*_{(K,g)}(Y, N)\|_{\Lpsi(g)} \,.
$$
If we choose the weights so that there is no kernel (see Appendix~\ref{s2X15.1}),
one concludes existence of perturbed initial data $(\delta K, \delta g)$ given by \eqref{kgegalpyn}, where
$$
( Y, N)\in \Lpsikg{k+3}{g}\times
\Lpsikg{k+4}{g}
\,.
$$
Appendix~\ref{A12VII15.1} gives in particular
\bel{17IX15.2+}
  N =
o(x^{-a-n/2} z^{-b } \rho^{-c+n/2})
\,,
\quad
  Y =
o_g(x^{-a-n/2} z^{-b } \rho^{-c+n/2})
\ee
with corresponding behaviour of the derivatives:
$$
 \frac {x^i}{\rho^i}
 \nabla^{(i)} N =
 o_g(x^{-a-n/2} z^{-b } \rho^{-c+n/2})
 \,,
 \
    \frac {x^i}{\rho^i}
    \nabla^{(i)} Y =
    o_g(x^{-a-n/2} z^{-b } \rho^{-c+n/2})
 \,.
$$
In particular $\Phi P^*_{(K,g)}$ will possess the same  asymptotic behaviour.
Thus
\beal{13VI14.5h}
\hspace{10mm}\lefteqn{
  (\delta K, \delta g)
   = \Phi\psi^2 \Phi P^*(\delta Y,\delta N)\in \psi^2 \left(\phi\Lpsikg{k+2}{g}\times
\phi^2\Lpsikg{k+2}{g} \right)
   }
 &&
\\
\nonumber&&
   =
    x^{2a}z^{2b}\rho^{2c}\left(\frac{x}{\rho} o_g(x^{-a-n/2} z^{-b } \rho^{-c+n/2}) \,, \frac{x^2}{\rho^2}  o_g(x^{-a-n/2} z^{-b } \rho^{-c+n/2}) \right)
    \nn
\\
  \nonumber &   &=
	\left( o_g(x^{a-n/2+1} z^{b } \rho^{c+n/2-1}) \,,
  o_g(x^{a-n/2+2} z^{b } \rho^{c+n/2-2})\right)
  \,.
\eea
In fact,  Appendix~\ref{A12VII15.1} gives
\bel{estiKgSobolev}
  (\delta K, \delta g)\in
	C^{k+2-\lfloor\frac n 2\rfloor+\alpha}_{{x}{\rho^{-1}},x^{-a+n/2-1} z^{-b } \rho^{-c-n/2+1}} \times
  C^{k+2-\lfloor\frac n 2\rfloor+\alpha}_{{x}{\rho^{-1}},x^{-a+n/2-2} z^{-b } \rho^{-c-n/2+2}}
  \,.
\ee
One concludes using the implicit function theorem as in the proof of Theorem~\ref{T21IX15.1}.
\qed

\subsection{Localised Isenberg-Lee-Stavrov gluing}
 \label{ss18IX15.4}

The gluing constructions so-far allow one to provide an analogue of ``Maskit-type'' gluings of
Isenberg, Lee and Stavrov~\cite{ILS}. This proceeds as follows:

Consider points $p_1$, $p_2$, lying on the conformal boundary of two asymptotically hyperbolic manifolds
$(M_1,K_1,g_2)$ and $(M_2,K_2,g_2)$, or two-distinct points lying on the conformal boundary of an asymptotically hyperbolic manifold, with the same constant asymptotic value $\tau$ of the extrinsic curvature tensors in all cases, and with conformally flat boundaries at infinity.
For definiteness we sketch the construction in the former case, the latter differing from the former in a trivial way.
As described above, for all $\varepsilon>0$ sufficiently small we can replace the metrics $g_i$ by the hyperbolic metric in coordinate half-balls $\mcU_\varepsilon^{(1)}$ around $p_1$ and
$\mcU_\varepsilon^{(2)}$ around $p_2$, and the $K_i$'s by $\tau $ times the hyperbolic metric. By scaling $\mcU_\varepsilon^{(2)}$ we obtain a metric near $p_2$ which is the hyperbolic metric inside a half-ball $B_4$ and coincides with $g_2$ outside of $B_8$. Performing a hyperbolic inversion of $\mcU_\varepsilon^{(1)}$ about $p_1$, followed by a scaling, one obtains a metric on the half-space model which coincides with $g_1|_{B_1}$ inside a semi-ball $B_1$ and the hyperbolic space outside of a half-ball $B_2$. Identifying the
hyperbolic metric on the annulus  $A_{2,4}$ in the trivial way, one obtains a ``Maskit-type'' gluing of the initial data across the annulus, as illustrated in Figure~\ref{fig:1}.
\begin{figure}[h]
\hspace{-1cm}
\begin{minipage}[b]{.38\textwidth}
\begin{center}
{\includegraphics{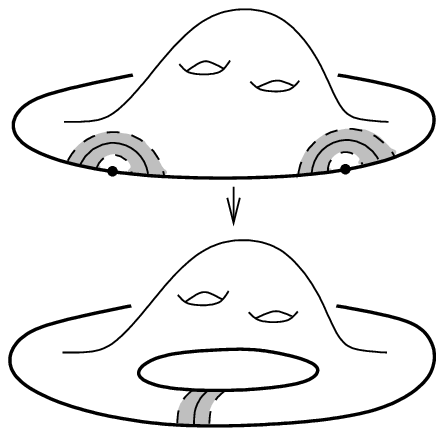}}
\end{center}
\end{minipage}
\begin{minipage}[b]{.53\textwidth}
\begin{center}
{\includegraphics{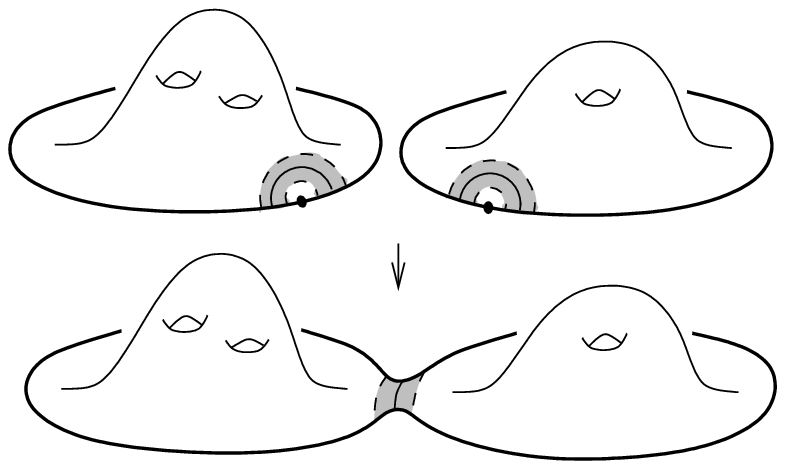}}
\end{center}
\end{minipage}
\caption{\label{fig:1}The Isenberg-Lee-Stavrov gluing of an AH initial data set to itself (left figure) or of two AH initial data sets (right figure). Figures from~\cite{ILS}, with kind permission of the authors.}
\end{figure}

Summarising, we have (the detailed regularity conditions on the metrics are as in Theorem~\ref{Thefulltheorem}):

\begin{theorem}\label{T3X15.1}
Let $(M_a, K_a, g_a)$, $a=1,2$ be two asymptotically hyperbolic and asymptotically CMC
initial data sets satisfying the Einstein \(vacuum\) constraint
equations, with the same constant asymptotic values of  $\tr_{g_1}K_1$ and $\tr_{g_2}K_2$ as $\partial M$ is approached and with locally conformally flat boundaries at infinity
$\partial\overline M_a$. Let $p_a\in \partial M_a$ be  points on the conformal boundaries.
Then for all $\varepsilon$ sufficiently small there exist asymptotically hyperbolic and asymptotically CMC vacuum initial data sets
$(M_\varepsilon, K_{\varepsilon }, G_{\varepsilon })$
such that
\begin{enumerate}
\item
$M_\varepsilon$ is diffeomorphic to the interior of a boundary connected sum of the $M_a$'s, obtained
 by excising small half-balls $B_1$ around
$p_1$ and $B_2$ around $p_2$, and identifying their boundaries.

\item
On the complement of coordinate half-balls of radius $\varepsilon$ surrounding $p_1$ and $p_2$,
and away from the
corresponding neck region in $M_\varepsilon$, the data
$(K_{\varepsilon }, g_{\varepsilon })$  coincide with the original ones
on each of the $M_a$'s.
\end{enumerate}
\end{theorem}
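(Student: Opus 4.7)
\proof[Proof plan]
The plan is to reduce Theorem~\ref{T3X15.1} to the local exchange result of Section~\ref{ss18IX15.3}, in its initial-data incarnation announced in Remark~\ref{R28IX15.1}, and then to exploit the transitivity of the isometry group of hyperbolic space to carry out a purely geometric identification of two exactly hyperbolic pieces. First, I would invoke the local conformal flatness of $\partial\overline M_a$ near $p_a$ to fix a defining function $r_a$ and coordinates $(r_a,\theta_a)$ centered at $p_a$ as in \eqref{22IX15.12}, in which the rescaled boundary metric is flat to infinite order at $p_a$. In such coordinates the model hyperbolic metric $\mathring g$ and the hyperbolic trace $\tau\mathring g$ give a vacuum datum asymptotic to $(K_a,g_a)$, and the initial-data analogue of Theorem~\ref{propRcassimple2} then applies: for all $\varepsilon$ small enough one produces vacuum data $(\breve K_a,\breve g_a)$ on $M_a$ that agree with $(K_a,g_a)$ outside a coordinate half-ball $\mcV_\varepsilon^{(a)}$ around $p_a$ and equal the exact hyperbolic data $(\tau\mathring g,\mathring g)$ inside a smaller half-ball $\mcU_\varepsilon^{(a)}\subset\mcV_\varepsilon^{(a)}$.

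Next I would put the two modified data sets into a common half-space model by composing with isometries of hyperbolic space. Working in the half-space coordinates $(z,\theta)\in\mcH$, a dilation $\phi_{\lambda_2}(\theta,z)=(\lambda_2\theta,\lambda_2 z)$ centered at $p_2$ pulls $(\breve K_2,\breve g_2)$ back to data which, after relabelling, equal $(\tau\mathring g,\mathring g)$ inside a half-ball $B_4$ and coincide with (a conformal copy of) $(K_2,g_2)$ outside $B_8$. Analogously, a hyperbolic inversion at $p_1$ followed by an appropriate dilation sends $(\breve K_1,\breve g_1)$ to data that are exactly hyperbolic outside a half-ball $B_2$ and coincide with (a conformal copy of) $(K_1,g_1)$ inside $B_1$. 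In both cases the relevant transformations are isometries of the ambient hyperbolic space, so they preserve the vacuum constraints as well as the exact-hyperbolicity of the glued-in pieces.

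The final step is the identification: on the half-annulus $A_{2,4}\subset\mcH$ both transformed data sets reduce identically to $(\tau\mathring g,\mathring g)$, so gluing them along $A_{2,4}$ via the identity map produces a smooth datum $(K_\varepsilon, g_\varepsilon)$ on the boundary connected sum $M_\varepsilon:=M_1\#_{p_1,p_2}M_2$, where the half-balls $B_1$ around $p_1$ and $B_2$ around $p_2$ have been excised and their boundaries identified. The datum is asymptotically hyperbolic and asymptotically CMC because outside the neck it agrees with the original $(K_a,g_a)$, it satisfies the vacuum constraints pointwise (on the unchanged exterior by hypothesis, on the transition annulus because the data there are exactly hyperbolic, and in the interior half-balls $\mcU_\varepsilon^{(a)}$ by Theorem~\ref{Thefulltheorem}), and conclusions (1) and (2) are built into the construction. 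The only genuine obstacle is the local conformal exchange step: one must verify that, under the local conformal flatness assumption at $p_a$ and the regularity hypotheses of Theorem~\ref{Thefulltheorem}, the difference $(\tau\mathring g-K_a,\mathring g-g_a)$ lies in $C^{k+3}_{1,z^{-\sigma}}\times C^{k+4}_{1,z^{-\sigma}}$ for some admissible $\sigma$, so that the smallness hypothesis of Theorem~\ref{propRcassimple2} can be met after the $\phi_\lambda$-rescaling; all remaining work is a transcription of the arguments already given in Sections~\ref{ss18IX15.3} and \ref{ss18IX15.7}.
\qed
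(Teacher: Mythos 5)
Your proposal follows essentially the same route as the paper: apply the initial-data version of the exchange result of Section~\ref{ss18IX15.3} (cf.\ Remark~\ref{R28IX15.1}) to make the data exactly hyperbolic in half-balls around $p_1$ and $p_2$, then use a dilation at $p_2$ and a hyperbolic inversion plus dilation at $p_1$ to place both data sets in the half-space model, and identify them across the annulus $A_{2,4}$ where both are exactly the hyperbolic data. Your closing remark on checking that $(\tau\mathring g-K_a,\mathring g-g_a)$ lies in the required weighted spaces is precisely where the conformal-flatness and common asymptotic CMC hypotheses enter, consistent with the paper's argument, so the proof is correct as proposed.
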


The analogous statement for gluing $M$ with itself,
as in the left figure of Figure~\ref{fig:1},
is left to the reader.

We emphasise that our construction applies to all dimensions, and only deforms the initial data sets near the gluing region. It does not require any non-degeneracy hypotheses, or polyhomogeneity, on the metrics. The drawback is that one has poorer control of the asymptotic behavior of the initial data in the gluing regions as compared to~\cite{ILS}; the exact differentiability of the resulting metrics can be read-off from our theorems proved above. In particular the differentiability at the conformal boundary of the final initial data set is well under the threshold needed to apply the current existence theorems for the conformal vacuum Einstein equations~\cite{F1,TimConformal,Friedrich83}.

\section{Weighted Poincar\'e inequalities}
 \label{s18IX15.6}

The aim of this section is to establish  weighted Poincar\'e inequalities for metrics of the form
\bel{19V15.4}
 g =\frac {1} {z^2}
  \big(
   {\big(1+O(z)\big)dz^2 + \underbrace{h_{i j}(z,\theta^k)d\theta^i d\theta^j}_{=:h(z)}}
    +O(z)d\theta^i dz
   \big) =: z^{-2} \tg
 \,,
\ee
where $h(z)$ is a family of Riemannian metrics satisfying
$$
c^{-1} \underbrace{\ringh_{ij} (\theta^k) d\theta^i d\theta^j}_{=:
 \ringh} \le  h(z) \le c \,\ringh
 \,,
$$
for some  smooth metric $\ringh$ and some constant $c$. Here $\{z=0\}$ is a conformal boundary at infinity, and the coordinates $(\theta^i)=(x,y^A)$ are local coordinates on the level sets of $z$. The coordinate $x$ should be thought of as a defining coordinate for another boundary $\{x=0\}$, which will be chosen to satisfy
\bel{18IX15.1}
 |dx|^2_{\tilde g}= (1+O(x))
 \quad
 \Longleftrightarrow
 \quad
 |dx|^2_g=z^2(1+O(x))
 \,.
\ee

The inequalities we are about to prove correspond to weights
$$
 \phi = \frac x \rho
 \,,
 \quad
  \psi = x^{a-1}z^b \rho^{c+1}
  \,,
$$
which are trivially shifted as compared to those of Section~\ref{SwSs}.

\begin{Remark}
   \label{R21X2015}
   {\rm
We recall that the function $z$ is assumed to be smooth, bounded, and defined globally, providing a coordinate  near the conformal boundary but not necessarily elsewhere.
Similarly the function $x$ is assumed to be smooth, bounded, and defined globally,  providing  a coordinate near its zero-level set but not necessarily elsewhere.
}
 \myqed
\end{Remark}

The following identity will often be used:
\bel{19V15.5}
 (\Gamma - \wG)^k_{ij}
 = - z^{-1} \left(2 \delta^k_{(i} \wnabla_{j)} z - \tg_{ij} \wnabla^k z
 \right)
 \,,
\ee
 \ptcr{definitions added}
where $\wnabla$ is the covariant derivative of the metric $\tg$. Further, a parenthesis over indices denotes symmetrisation, e.g.
$$
 X_{(i}Y_{j)} = \frac 12 ( X_i Y_j + Y_i X_j)
 \,.
$$

\begin{remark}{\rm
 \label{R26X15.1}
In the calculations leading to various intermediate estimates the Christoffel symbols of $\tilde g$ will be assumed to be in $L^\infty$. However, we note the following:
Many of the arguments in this paper are based on inequalities for tensor fields of the form
\bel{estiformelle}
\int|L_gu|_{g}^2\phi^2\psi^2d\mu_g\geq\int (C^2+o(1))|u|_g^2\psi^2d\mu_g,
\ee
where $L_g$ is a first order operator (equal in our case to $\nabla $ or the operator $S $ of \eq{6VII14.1} below),  and $o(1)$ tends to zero when some
parameters (e.g., a relevant variable)  tend to zero.
We observe that if the inequality  (\ref{estiformelle}) is valid for a metric $g$, and if $h$
is another metric  equivalent to $g$ and such that $|\nabla_gh|_g\phi=o(1)$, so that
$$
 |\Gamma(g)-\Gamma(h)|_g\phi=o(1)
 \,,
$$
then the inequality will remain valid for $h$.

Similar arguments can be used for inequalities involving boundary terms,
 and/or for operators of order $k$.
\myqed
}\end{remark}

\begin{remark}{\rm
 \label{R26X15.2}
Our strategy  to obtain (\ref{estiformelle}) is to use possibly several integrations by parts to obtain an identity
of the form
\bel{26X15.6}
\int u*Lu\;\phi\psi^2=\int(c^2+o(1))|u|^2\psi^2
\,,
\ee
perhaps with further boundary terms,
where $u*Lu$ is a  linear combination of controlled tensors contracted with
$u\otimes Lu$ in such a way that  the pointwise inequality $|u*Lu|_g\leq C |u|_g|Lu|_g$ holds everywhere for some constant $C $.
The inequality (\ref{estiformelle}) is then obtained by estimating the integrand of the left-hand side of  \eq{26X15.6} as
$$
 |u|_g|Lu|_g\phi\leq \frac{c^2}2|u|_g^2+\frac1{2c^2}\phi^2|Lu|_g^2
 \,,
$$
and by carrying over the $c^2 |u|_g^2\psi^2/2$ terms to the right-hand side of \eq{26X15.6}.
\myqed
}\end{remark}

We will need the following generalisation of~\cite[Proposition~C.2]{ChDelay}, with identical proof, except that a boundary term arises now:

\begin{Proposition}\label{prop:poinc}
Let $u$ be a $C^1$ compactly supported tensor field on a Riemannian manifold $(M,g)$, and
let $w,v$ be two $C^2$ functions defined on a neighborhood of the
support of $u$, then for any domain $\Omega$ with Lipchitz boundary,
\beal{14VI14.5}
  \intOmega  e^{2v}|\nabla u|^2
    &\geq&  \intOmega  e^{2v}\left[\Delta v+\Delta w +|\nabla v|^2-|\nabla
    w|^2 \right]|u|^2\\
		&&\nn
		\hspace{1cm}-\int_{\partial\Omega}e^{2v}\langle\nabla (v+w),\eta\rangle|u|^2
  \,,
\eea
where $\eta$ is the outwards unit normal to $\partial\Omega$.
\end{Proposition}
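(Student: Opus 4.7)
The proof follows the classical ``integration by parts + completing the square'' scheme used for the interior version (\cite[Proposition~C.2]{ChDelay}); the only new ingredient is the careful tracking of boundary terms produced by the two applications of the divergence theorem. Since $u$ is merely compactly supported on $M$ (not necessarily vanishing on $\partial\Omega$), these boundary terms survive.

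\textbf{Step 1: Rewrite in terms of $e^v u$.} I would start from the pointwise identity
\begin{equation*}
  e^{2v}|\nabla u|^2 = |\nabla(e^v u)|^2 - e^{2v}|\nabla v|^2 |u|^2 - e^{2v} \langle \nabla v, \nabla |u|^2\rangle,
\end{equation*}
which follows from $\nabla(e^v u) = e^v(\nabla v\otimes u + \nabla u)$ and $2\langle\nabla v\otimes u,\nabla u\rangle = \langle \nabla v,\nabla|u|^2\rangle$. Integrating over $\Omega$ and integrating the last term by parts produces
\begin{equation*}
  \int_\Omega e^{2v}|\nabla u|^2 = \int_\Omega |\nabla(e^v u)|^2 + \int_\Omega e^{2v}(\Delta v + |\nabla v|^2)|u|^2 - \int_{\partial\Omega} e^{2v}\langle \nabla v,\eta\rangle |u|^2.
\end{equation*}

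\textbf{Step 2: Introduce $w$ via a square.} The pointwise inequality
\begin{equation*}
  |\nabla(e^v u) + e^v u\otimes \nabla w|^2 \geq 0
\end{equation*}
expands to
\begin{equation*}
  |\nabla(e^v u)|^2 \geq -\langle \nabla w, \nabla(e^{2v}|u|^2)\rangle - e^{2v}|u|^2 |\nabla w|^2,
\end{equation*}
using again that $2\langle\nabla(e^v u), e^v u \otimes\nabla w\rangle = \langle \nabla w, \nabla(e^{2v}|u|^2)\rangle$.

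\textbf{Step 3: Integration by parts on the $w$-term.} Integrating Step~2 over $\Omega$ and applying the divergence theorem to the first term on the right yields
\begin{equation*}
  \int_\Omega |\nabla(e^v u)|^2 \geq \int_\Omega e^{2v}(\Delta w - |\nabla w|^2)|u|^2 - \int_{\partial\Omega} e^{2v}\langle \nabla w,\eta\rangle |u|^2.
\end{equation*}

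\textbf{Step 4: Combine.} Substituting the bound from Step~3 into the identity from Step~1 gives exactly
\begin{equation*}
  \int_\Omega e^{2v}|\nabla u|^2 \geq \int_\Omega e^{2v}\bigl[\Delta v + \Delta w + |\nabla v|^2 - |\nabla w|^2\bigr]|u|^2 - \int_{\partial\Omega} e^{2v}\langle \nabla(v+w),\eta\rangle |u|^2,
\end{equation*}
which is the desired inequality.

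\textbf{Remarks on obstacles.} There is no serious analytic obstacle: the regularity hypotheses ($u\in C^1$ compactly supported, $v,w\in C^2$) make all manipulations legitimate, and the Lipschitz regularity of $\partial\Omega$ is exactly what is needed for the divergence theorem to apply with an almost-everywhere defined outward unit normal $\eta$. The only point requiring some care is bookkeeping the signs and ensuring that the two boundary contributions in Steps~1 and~3 combine into the single term involving $\nabla(v+w)$. If one wanted less regularity on $u$, a density argument using $C^\infty_c$ approximation would suffice, but this is not needed for the stated hypotheses.
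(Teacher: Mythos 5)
Your proof is correct, and it is essentially the argument the paper has in mind: the paper simply refers to \cite[Proposition~C.2]{ChDelay}, whose proof is the same completing-the-square-plus-integration-by-parts computation, now with the boundary terms from the divergence theorem retained. Your splitting into a conjugation by $e^{v}$ followed by a square involving $\nabla w$ is just a two-step organization of the single completed square $\int e^{2v}|\nabla u+u\otimes\nabla(v+w)|^2\ge 0$, so there is nothing genuinely different to flag.
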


\subsection{Near the corner}
 \label{26VI16.1}

We start with the following:
\newcommand{\asigma}{a}%
\newcommand{\bmu}{b}%
\newcommand{\cgamma}{c}%

\begin{Proposition}
   \label{P21VI14.1n}
Let  $\R\ni \bmu\ne \frac{n-1}2$, $\cgamma\in \R$. For all $\asigma$ large enough
there exist constants
$\hat c>0$,
$\hat x>0$ and $\hat z>0$ such that for all differentiable tensor fields $u$ with
compact support in $\{0<x<\hat x\}\cap \{0<z<\hat z\}$ we have
\bel{14VI14.5x}
  \intOmega  x^{2 \asigma} z^{2\bmu} \rho^{2\cgamma} |\nabla u|^2
    \geq \hat c \intOmega  x^{2 \asigma-2} z^{2\bmu} \rho^{2 \cgamma+2} |u|^2
  \,.
\ee
\end{Proposition}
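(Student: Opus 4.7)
I plan to apply Proposition~\ref{prop:poinc} (the weighted integration-by-parts identity) with the choice
$$v = a \log x + b \log z + c \log \rho \,, \qquad w = \tfrac{1-n}{2}\log z \,,$$
so that $e^{2v} = x^{2a} z^{2b} \rho^{2c}$. Since $u$ has compact support in the open set $\{0<x<\hat x\}\cap\{0<z<\hat z\}$, the boundary integral in \eqref{14VI14.5} vanishes, and it suffices to establish the pointwise lower bound
$$\Delta v + \Delta w + |\nabla v|^2 - |\nabla w|^2 \;\geq\; \hat c \,\frac{\rho^2}{x^2} = \hat c\Big(1 + \frac{z^2}{x^2}\Big)$$
on the support of $u$, for some $\hat c>0$ whenever $\hat x, \hat z$ are small enough and $a$ is large enough.

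\textbf{Main computation.} Using the conformal-transformation formula $\Delta_g f = z^2\Delta_{\tilde g}f - (n-2)z\,\tilde g(\nabla z,\nabla f)$ together with $|dz|_{\tilde g}^2 = 1+O(z)$ and $|dx|_{\tilde g}^2 = 1+O(x)$, the contribution of $b\log z$ combines with that of $w=\frac{1-n}{2}\log z$ to give
$$b\,\Delta_g\log z + b^2|\nabla_g\log z|^2 + \Delta_g w - |\nabla_g w|^2 = \Big(b - \tfrac{n-1}{2}\Big)^2 + o(1) \,,$$
after completing the square; this contribution is strictly positive by the hypothesis $b\ne\tfrac{n-1}{2}$, and it accounts for the constant ``$1$'' part of $1+z^2/x^2$. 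Similarly, the $a\log x$ contribution is
$$a\,\Delta_g\log x + a^2|\nabla_g\log x|^2 = a(a-1)\,\frac{z^2}{x^2}(1+o(1)) \,,$$
which for $a$ large furnishes the $z^2/x^2$ part with ample room. The $c\log\rho$ contribution and the $ac$- and $bc$-cross terms are at most $O(z^2/\rho^2) = O(1)$, and so can be absorbed into the constant $(b-\tfrac{n-1}{2})^2$ floor once $a$ is chosen large. The $o(1)$'s arise from factors of $x$, $z$, or $\rho$ that are small on the support of $u$, and hence can be made arbitrarily small by shrinking $\hat x$ and $\hat z$.

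\textbf{Where the difficulty lies.} The main technical obstacle is the $ab$-cross term
$$2ab\,\langle\nabla\log x,\nabla\log z\rangle_g = \frac{2ab}{xz}\, g(\nabla x,\nabla z) \,,$$
for which, in the generality of the present setup, we only know $g(\nabla x,\nabla z) = O(z^2)$ (no $\tilde g$-orthogonality of the two boundaries is assumed, as it was in \eqref{26V15.1}). This cross term is thus of size $O(|ab|\,z/x)$, which is comparable to neither the $z^2/x^2$ nor the $1$ main terms. I would control it via Young's inequality
$$\frac{|ab|\,z}{x} \;\leq\; \frac{a(a-1)}{4}\cdot\frac{z^2}{x^2} + \frac{b^2}{a(a-1)}\cdot C \,,$$
so that half of the principal $a(a-1)z^2/x^2$ term absorbs the cross term, while the leftover bounded piece is dominated by $(b-\tfrac{n-1}{2})^2$ once $a$ is taken large enough relative to $|b|$ and $|c|$. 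Finally, along the lines of Remark~\ref{R26X15.1}, the error terms $O(x)$, $O(z)$ appearing in the metric coefficients and their first derivatives produce only $o(1)$ perturbations of the above identities and are absorbed by taking $\hat x$ and $\hat z$ sufficiently small.
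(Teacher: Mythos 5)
Your overall strategy is the paper's own: apply Proposition~\ref{prop:poinc} with logarithmic weights $v=a\ln x+b\ln z+c\ln\rho$ and a compensating $w$ containing $\frac{1-n}{2}\ln z$ (the paper also puts a term $-\frac12\ln x$ into $w$, which is inessential), and your computations of the $b$-block, giving $(b-\frac{n-1}2)^2+o(1)$, and of the $a$-block, giving $a(a-1)\frac{z^2}{x^2}(1+o(1))$, agree with \eq{Deltavsurv}--\eq{normedv}. The genuine gap is in your treatment of the $ab$-cross term. In the setting of this proposition the metric is of the form \eq{19V15.4} with $(x,y^A)$ coordinates on the level sets of $z$ and $|dx|^2_{\tilde g}=1+O(x)$; this forces $\langle\nabla x,\nabla z\rangle_g=O(z^3)$, see \eq{19V15.1} — it is part of the standing setup of Sections~\ref{s18IX15.6}--\ref{s7VII14.1}, not the extra hypothesis \eq{26V15.1} (which, in the proof of Theorem~\ref{T21IX15.1}, is arranged precisely so as to land in this setup). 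With the correct $O(z^3)$ bound the cross term is $\frac{2|ab|}{xz}O(z^3)=O\big(\frac{z^2}{x}\big)=O(x)\,\frac{z^2}{x^2}$, and is absorbed into the $a(a-1)\frac{z^2}{x^2}$ term simply by shrinking $\hat x$ (for fixed $a,b$); no Young inequality is needed. If, as you do, one only uses $\langle\nabla x,\nabla z\rangle_g=O(z^2)$, the argument does not close: the inequality you wrote is false as stated, since optimizing $C\frac{|ab|z}{x}\le\frac{a(a-1)}{4}\frac{z^2}{x^2}+K$ in $z/x$ forces $K\ge\frac{C^2a}{a-1}b^2$, which tends to $C^2b^2$ (not to $\frac{Cb^2}{a(a-1)}$) as $a\to\infty$; and a leftover of order $b^2$ cannot be dominated by the fixed floor $(b-\frac{n-1}2)^2$ for general $b\neq\frac{n-1}2$, no matter how large $a$ is. So either you invoke the structural $O(z^3)$ bound (and then the difficulty you isolated disappears), or your proof fails for a range of $b$.

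A second, smaller slip: the $\rho$-terms. The coefficient of $\frac{z^2}{\rho^2}$ contains $2ac$ (and $-2bc$), hence grows linearly in $a$; it cannot be ``absorbed into the constant $(b-\frac{n-1}2)^2$ floor once $a$ is chosen large'' — enlarging $a$ makes that term worse, not better. The correct absorption goes the other way: since $x\le\rho$ one has $\frac{z^2}{\rho^2}\le\frac{z^2}{x^2}$, so these $O(a)$ terms are swallowed by the $a(a-1)\frac{z^2}{x^2}$ term for $a$ large, which is exactly how the paper concludes (its $\hat b+\hat c\,x^2/\rho^2\ge\hat b/2$ step). With these two corrections your argument coincides with the paper's proof.
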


\begin{remark}{\rm
   \label{R23V15.2}
We can write
$$
   x^{2a} z^{2b} \rho^{2c} =  \left(\frac x z \right)^{2a} z^{2(a+b+c)}
    \bigg(1+ \frac {x^2}{ z^2}\bigg)^{ c} =: z^{2(a+b+c)} h\left(\frac x z \right)
    \,.
$$
Since $ x /z $ is equivalent to the hyperbolic distance from $\{x=0\}$, we see that our weights are equivalent to functions of the compactifying factor $z$ and of the hyperbolic distance to  $\{x=0\}$.
\myqed
}\end{remark}

\begin{remark}{\rm
  \label{R20IX15.11}
Our proof will appeal to the extensive calculations of Section~\ref{s7VII14.1}, which have to be done anyway for the purposes there. However, a computationally-friendlier  proof of Proposition~\ref{P21VI14.1n} can be carried out basing on the following observations:

First,
 \ptcr{argument added}
a standard calculation shows that it suffices to prove \eq{14VI14.5x} for functions: Indeed, suppose that \eq{14VI14.5x} with $u$ replaced by $f$ is true for all differentiable functions $f$ with support as in the statement of the theorem. Let $\chi$ be a smooth function with compact  support in $\{0<x<\hat x\}\cap \{0<z<\hat z\}$ such that $\chi\equiv 1$ on the support of the  tensor field $u$. For $\epsilon>0$ set $f_\epsilon= \chi \sqrt{\epsilon^2 + |u|^2}$. Then
$$
 |\nabla f_\epsilon|^2 \le 2\big( |\nabla \chi |^2 (\epsilon^2 + |u|^2) + |\nabla u|^2
  \big)
 \,.
$$
After passing with $\epsilon$ to zero in the inequality \eq{14VI14.5x} with $u$ replaced by the function $f_\epsilon$ one obtains \eq{14VI14.5x} for the tensor field $u$.

Next, for sufficiently small $x$ and $z$ the metric \eq{19V15.4} is equivalent
to the metric
\bel{20IX15.11}
 g =\frac {1} {z^2}
  \big(
   {dz^2 +  |d\theta|^2}
   \big)
 \,.
\ee
But when $u$ is a function, the inequality \eq{14VI14.5x} clearly implies the same inequality for any metric which is uniformly equivalent to \eq{20IX15.11}.
Hence, it suffices to prove \eq{14VI14.5x} for functions with the metric \eq{20IX15.11}. The calculations of Section~\ref{s7VII14.1} become considerably simpler
in this setting.
\myqed
}\end{remark}

{\noindent\sc Proof of Proposition~\ref{P21VI14.1n}:}
We wish to apply Proposition~\ref{prop:poinc} (actually here the original version from~\cite{ChDelay} without the boundary term suffices),
with  $u$ and $v$ equal to
%
\bel{21VI14.1}
 v = \asigma \ln x + \bmu \ln z + \cgamma \ln \rho
  \,,
\quad
   w = \nu \ln x + \beta \ln z + \lambda \ln \rho
 \,,
\ee
and where $\rho=\sqrt{x^2 + z^2}$. In fact $\lambda=0$ suffices for the current proof, but we allow $\lambda\in \R$ for future reference.
We are interested in the region of small $x$ and $z$, and therefore also small $\rho$.

Using \eq{Deltavsurv} and \eq{normedv} below one finds
\beal{20IX15.12}
 \lefteqn{\hspace{1cm}
 \Delta v+ |\nabla v|^2_{g}
 +
 \Delta w-  |\nabla w|^2_{g}
    }
     &&
\\
 \nonumber
 &=&
   \underbrace{\bmu(\bmu+1-n)+\beta(1-n-\beta)}_{=:\hat a} + O(z)+
\bigg[\underbrace{\asigma(\asigma-1) -\nu (\nu +1)}_{=:\hat b}
 +O(x)\bigg]\frac { z^2} {  x^2}
\\
 \nn
 &&
 +
  \bigg[\underbrace{ \cgamma(\cgamma+2-n-2\bmu+2\asigma)
  +    {\lambda } (-2  \nu + 2 - n - \lambda - 2 \beta) }_{=:\hat c}
\\
 &&\nn
 \phantom{+\bigg[}
 +O(x)+O(z)\bigg]\frac { z^2} { \rho^2}
   \,.
\eea
We choose $\beta=(1-n)/2$, $\nu=-1/2$, and $\lambda=0$. Then $\hat a= (\bmu+ (n-1)/2)^2>0$, $\hat b= (\asigma-1/2)^2\ge 0$ and,  for $\asigma$ large, $\hat b+\hat c x^2/\rho^2>\hat b/2>0$.
\qedskip

\subsection{A vertical stripe, $z$-weighted spaces}
 \label{ss20XI15.11}

Let $(M, g)$ an AH manifold with $g = z^{-2} {\tg}$.
Let $U$ be an open  subset of $M$ with smooth boundary $\partial U\subset M$
such that the boundary of $U$ in $\overline M$ is $\partial U\cup\partial_\infty U$,
where $\partial  U\subset   M$ and $\partial_\infty U\subset \partial M$, with $\partial U$ orthogonal to the level set of $z$ near $\{z=0\}$, in particular $\overline{\partial U}$  meets $\partial M$
$\tilde g$-orthogonally.
Let $\eta$ be the $g$-unit outwards normal to $\partial U$.
In our applications the set $U$ will be  of the form
$$
 U=\{0<x_1<x<x_2\,,\  0<z<z_1\}
$$
(smoothed-out near  $ \{z=z_1\}\times \{x=x_1   \mbox{ or }  x_2\}
$
if desired),
and note that neither $x_2$ nor $z_1$ are assumed to be small.

\begin{proposition}
For all $b\in\R$ and all tensor fields $u$ compactly supported in $M$ we have
\bea
\int_U|\nabla u|^2z^{2b}&=&\int_U
z^{2b}\left[\left(b-\frac{n-1}2\right)^2+o(1)\right]|u|^2\nn \\
&&+\int_{\partial U}
z^{2b}\left(\frac{n-1}2-b+o(1)\right)|u|^2\langle \frac{\nabla z}z,\eta\rangle.
\nn
\eea
 \ptcheck{8XI15}
\end{proposition}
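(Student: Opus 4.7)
The plan is to apply Proposition~\ref{prop:poinc} on $U$ with the one-parameter family of weights $v = b\ln z$, $w = \beta\ln z$, and to choose $\beta$ so as to extract the sharp constant $\bigl(b-\tfrac{n-1}{2}\bigr)^2$.

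First I would record the two asymptotic identities
\[
 |\nabla\ln z|_g^2 = 1 + o(1), \qquad \Delta_g \ln z = -(n-1) + o(1),
\]
valid as $z\to 0$. The first is immediate from $g = z^{-2}\tilde g$ together with \eq{18IX15.1}, which give $|\nabla z|_g^2 = z^2|\nabla z|_{\tilde g}^2 = z^2(1+O(z))$. For the second I would apply the conformal-change formula
\[
 \Delta_g u = z^2\Delta_{\tilde g} u - (n-2)z\langle\nabla z,\nabla u\rangle_{\tilde g}
\]
to $u = \ln z$ and use the elementary identity $\Delta_{\tilde g}\ln z = \Delta_{\tilde g} z/z - |\nabla z|_{\tilde g}^2/z^2$ to obtain
\[
 \Delta_g \ln z = z\Delta_{\tilde g} z - (n-1)|\nabla z|_{\tilde g}^2 = -(n-1) + o(1),
\]
since $\tilde g$ is regular up to $\{z=0\}$ so that $z\Delta_{\tilde g}z = O(z)$.

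Substituting into the right-hand side of Proposition~\ref{prop:poinc},
\[
 \Delta v + |\nabla v|^2 + \Delta w - |\nabla w|^2 = b^2 - b(n-1) - \beta^2 - \beta(n-1) + o(1),
\]
which is maximised at $\beta = -(n-1)/2$, yielding the coefficient $\bigl(b - \tfrac{n-1}{2}\bigr)^2 + o(1)$. The boundary integrand becomes
\[
 -\langle\nabla(v+w),\eta\rangle = -\Bigl(b - \tfrac{n-1}{2}\Bigr)\Bigl\langle\tfrac{\nabla z}{z},\eta\Bigr\rangle = \Bigl(\tfrac{n-1}{2} - b\Bigr)\Bigl\langle\tfrac{\nabla z}{z},\eta\Bigr\rangle,
\]
which matches the stated expression.

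The main technical subtlety is controlling the $o(1)$ uniformly: these errors arise from the $O(z)$ terms in the AH expansion and are therefore small only when $z$ is small. One interprets the conclusion as asserting that, for $u$ supported in a sufficiently thin stripe $\{z<z_1\}$ of the conformal boundary (equivalently, with $z_1$ small enough), the inequality holds with arbitrarily small correction in the coefficient; contributions from farther regions are absorbed into the $o(1)$. No boundary term appears at $\{z=0\}$ even though $U$ may extend there, since $u$ is compactly supported in $M$. The reduction from tensor fields to scalars used in Proposition~\ref{prop:poinc} is the standard one via the Kato inequality $|\nabla|u|| \le |\nabla u|$, so the tensorial statement follows from its scalar version.
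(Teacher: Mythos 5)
Your proposal is correct and follows essentially the same route as the paper, which simply applies Proposition~\ref{prop:poinc} with $v=b\ln z$ and $w=\frac{1-n}{2}\ln z$ (your optimisation over $\beta$ lands on exactly this choice), the asymptotic identities $|\nabla\ln z|_g^2=1+o(1)$ and $\Delta_g\ln z=-(n-1)+o(1)$ being the computation implicit there. Your closing remark about reducing tensors to scalars via the Kato inequality is unnecessary: Proposition~\ref{prop:poinc} is already stated and proved directly for tensor fields, and since the conclusion is used as a lower bound coming from an inequality, no such reduction is needed.
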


\proof
We apply Proposition \ref{prop:poinc} with $v=b\ln z$ and $w=\frac{1-n}2\ln z$.
\qedskip

Given our choice of the set $U$ we have $\langle \frac{\nabla z}z,\eta\rangle=0$ near $\{z=0\}$, which leads to:

\begin{proposition}
 \label{PoincarebandeV}
 \ptcheck{8XI15}
Let $b\in\R$, $b\neq \frac{n-1}2$. There exist $z_0>0$ and a constant $C>0$ such that for all tensor fields $u$ compactly supported in $M$ it holds
$$
\int_U|\nabla u|^2z^{2b}+\int_{U\cap\{z>z_0\}}
|u|^2+\int_{\partial U\cap\{z>z_0\}}
|u|^2\geq C \int_{U}
z^{2b}|u|^2.
$$
\end{proposition}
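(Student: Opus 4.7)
\medskip
\noindent\textbf{Proof plan for Proposition \ref{PoincarebandeV}.}
The plan is to feed the exact identity of the previous proposition into a splitting of $U$ into the ``asymptotic'' part $U\cap\{z<z_0\}$ and the ``interior'' part $U\cap\{z\geq z_0\}$, and use the assumed orthogonality of $\partial U$ to the level sets of $z$ near $\{z=0\}$ in order to eliminate the boundary term where it cannot be controlled.

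First, I would invoke the identity
\[
\int_U|\nabla u|^2 z^{2b}=\int_U z^{2b}\!\left[\bigl(b-\tfrac{n-1}{2}\bigr)^2+o(1)\right]|u|^2
+\int_{\partial U} z^{2b}\!\left(\tfrac{n-1}{2}-b+o(1)\right)|u|^2\,\langle \tfrac{\nabla z}{z},\eta\rangle.
\]
Because $\partial U$ is $\tilde g$-orthogonal to the level sets of $z$ in a neighborhood of $\{z=0\}$, one has $\langle \nabla z/z,\eta\rangle=0$ on $\partial U\cap\{z<z_0\}$ for some $z_0>0$. Moreover, since $b\ne (n-1)/2$, by shrinking $z_0$ further we may assume that the bulk coefficient satisfies $(b-(n-1)/2)^2+o(1)\geq \tfrac12\bigl(b-(n-1)/2\bigr)^2=:\kappa>0$ throughout $\{z<z_0\}$.

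Next, I would split both the left- and right-hand sides. On $U\cap\{z\geq z_0\}$ the weight $z^{2b}$ lies between two positive constants (depending on $z_0$ and on a global upper bound for $z$), so
\[
\Bigl|\int_{U\cap\{z\geq z_0\}} z^{2b}\bigl[(b-\tfrac{n-1}{2})^2+o(1)\bigr]|u|^2\Bigr|
\;\leq\; C_1\!\int_{U\cap\{z\geq z_0\}}\!|u|^2,
\]
and similarly
\[
\Bigl|\int_{\partial U\cap\{z\geq z_0\}} z^{2b}\bigl(\tfrac{n-1}{2}-b+o(1)\bigr)|u|^2\langle \tfrac{\nabla z}{z},\eta\rangle\Bigr|
\;\leq\; C_2\!\int_{\partial U\cap\{z\geq z_0\}}\!|u|^2.
\]
Substituting these estimates into the identity, and transferring the controlled terms to the left, yields
\[
\int_U|\nabla u|^2 z^{2b}+C_1\!\int_{U\cap\{z\geq z_0\}}\!|u|^2+C_2\!\int_{\partial U\cap\{z\geq z_0\}}\!|u|^2
\;\geq\; \kappa\!\int_{U\cap\{z<z_0\}} z^{2b}|u|^2.
\]

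Finally, adding $\kappa\int_{U\cap\{z\geq z_0\}} z^{2b}|u|^2$ to the right-hand side, and bounding this extra term in turn by $C_3\int_{U\cap\{z\geq z_0\}}|u|^2$ on the left, reconstitutes the full weighted integral $\kappa\int_U z^{2b}|u|^2$ on the right and leaves only the three advertised terms on the left. There is no real obstacle here; the only point requiring a little care is the choice of $z_0$, which must be small enough to simultaneously ensure (i) absorption of the bulk $o(1)$ error by the leading constant $(b-(n-1)/2)^2$, and (ii) vanishing of the boundary integrand from the near-conformal-boundary portion of $\partial U$ via the orthogonality hypothesis. Both requirements hold for all $z_0$ below some threshold determined only by $(M,g)$ and $\partial U$.
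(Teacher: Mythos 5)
Your argument is correct and is essentially the paper's proof: the paper derives the same identity from Proposition \ref{prop:poinc} with $v=b\ln z$, $w=\frac{1-n}{2}\ln z$, kills the boundary term near $\{z=0\}$ using the orthogonality of $\partial U$ to the level sets of $z$, and absorbs everything on $\{z\ge z_0\}$ into the unweighted interior and boundary terms exactly as you do. The only difference is that you spell out the routine splitting and absorption details that the paper leaves implicit.
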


\subsection{A horizontal stripe away from the corner}
 \label{ss8XI15.1}

In this section we stay away from the conformal boundary $\{z=0\}$, so the issue whether or not $(M,g)$ is asymptotically hyperbolic becomes irrelevant. The argument is somewhat similar to that in the last section. Note, however, that both the result and the details of the analysis here are \emph{a fortiori} different, because in the current section the metric is smooth up to the boundary $\{x=0\}$, while in Section~\ref{ss20XI15.11} the metric degenerates at $\{z=0\}$.

To make things unambiguous, throughout the current section we consider
a Riemannian manifold with boundary  $(\overline M, g)$, and denote by $M$ the interior of $\overline M$.
Let $x$ be a smooth defining function for $\partial M$, equal to the distance to
$\partial M$ near $\partial M$.
Let $U$ be an open  subset of $M$ with smooth boundary $\partial U\subset M$
such that the boundary of $U$ in $\overline M$ is $\partial U\cup\partial_0 U$,
with $\partial_0 U\subset \partial M$,  and with $\partial U$ being orthogonal to the level set of $x$ near $\{x=0\}$. In particular $\overline{\partial U}$   meets $\partial M$
$g$-orthogonally.
Let $\eta$ be the outwards-pointing unit $g$-normal to $\partial U$. Neither $\partial M$ nor $\partial U$ need to be connected.

For our applications the set $U$ will take the form
$$
 U=\{0<x<x_1\,,\ z_1<z<z_2\}\ \mbox{ smoothed-out near}\ \{z=z_1,z_2\}\times \{x=x_1\}
  ,
$$
with $x_1,z_1>0$.
As before, neither the $z_i$'s nor $x_1$ need to be small.

\begin{remark}{\rm
 \label{10XI15.1}
 The reader is warned that the results of the current (sub)section
are proved for a general Riemannian metric $g$, but will be used
in our applications with the metric $\tilde g=z^2g$ in place of $g$.
Since $z_1>0$, we are \emph{away from the corner $x=z=0$}, so the asymptotic estimates both for $g$ and $\tg$ are of the same type because
the norms are equivalent, and we also have $\Gamma_g-\Gamma_{\tilde g}=O_{ g}(1)=O_{\tilde g}(1) $
 (with constants which degenerate as $z_1$ tends to zero; this issue is addressed in our final argument by the analysis of Section~\ref{26VI16.1}).
 \myqed
}\end{remark}
\newcommand{\amoinsun}{{\color{red} (a-1)}}

\begin{proposition}
 \label{P21XI15.1}
    \ptcheck{8XI15}
For all $a\in\R$ and all tensor fields $u$ compactly supported in $M$ we have
\bea
\int_U|\nabla u|^2x^{2a}&=&\int_U
x^{2(a-1)}\left[\left(a-\frac{1}2\right)^2+o(1)\right]|u|^2\nn \\
&&-\int_{\partial U}
x^{2(a-1)}\left(a-\frac{1}2+o(1)\right)|u|^2\langle {\nabla x},\eta\rangle.
\nn
\eea
\end{proposition}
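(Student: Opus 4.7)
The plan is to mirror the proof of the preceding $z$-weighted proposition, applying Proposition~\ref{prop:poinc} with
\[
 v = a \ln x\,,\qquad w = -\tfrac{1}{2}\ln x\,.
\]
The shift by $-\tfrac{1}{2}$ replaces the $\frac{1-n}{2}$ that appeared in the $z$-case because here the metric $g$ is smooth up to $\{x=0\}$ and $x$ is the $g$-distance to $\partial M$: the factor $z^{-(n-1)}$ from the hyperbolic volume element is absent, and only the ``flat'' half-integer shift survives. This is precisely the choice that turns the coefficient $a^2-a$ (which would arise from $w=0$) into the completed square $(a-\tfrac12)^2 = a^2-a+\tfrac14$, the extra $\tfrac14$ coming from $\Delta w - |\nabla w|^2$.

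First I would record the two ``Gauss lemma'' type facts valid near $\partial M$: $|\nabla x|_g^2 = 1 + O(x)$, and $\Delta_g x = O(1)$, controlled by the mean curvatures of the level sets $\{x=\mathrm{const}\}$. Using these,
\[
 |\nabla v|_g^2 = \tfrac{a^2}{x^2} + O\!\bigl(\tfrac{1}{x}\bigr)\,,\qquad
 \Delta_g v = -\tfrac{a}{x^2} + o\!\bigl(x^{-2}\bigr)\,,
\]
and similarly $|\nabla w|^2 = \tfrac{1}{4x^2} + o(x^{-2})$, $\Delta w = \tfrac{1}{2x^2} + o(x^{-2})$. Summing,
\[
 \Delta v + |\nabla v|^2 + \Delta w - |\nabla w|^2 = \tfrac{(a-1/2)^2}{x^2} + o\!\bigl(x^{-2}\bigr)\,,
\]
and the boundary integrand is
\[
 \langle \nabla(v+w),\eta\rangle = \tfrac{a-1/2}{x}\,\langle \nabla x,\eta\rangle\,.
\]
Inserting these into Proposition~\ref{prop:poinc} (with $e^{2v}=x^{2a}$) and multiplying the bulk integrand by $x^{2a}$ produces exactly the $x^{2(a-1)}[(a-1/2)^2+o(1)]$ and $x^{2(a-1)}(a-1/2+o(1))\langle \nabla x,\eta\rangle$ coefficients of the statement. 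The ``$=$'' is to be read modulo the $o(1)$ remainders, which absorb both the $O(x)$ geometric errors above and the Cauchy--Schwarz slack in the derivation of Proposition~\ref{prop:poinc}; the argument of Remark~\ref{R26X15.1} legitimises treating $g$ and $\tilde g$ interchangeably since $U$ is bounded away from $\{z=0\}$.

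The main obstacle is the bookkeeping of the $o(x^{-2})$ remainder terms, which must be controlled uniformly in the rest of $U$. Two structural features of the setup handle this. First, since $x$ is the $g$-distance to $\partial M$ near $\partial M$, the corrections $|\nabla x|^2 - 1$ and $x\Delta x$ both vanish at $x=0$, so the error contributions to $\Delta v + |\nabla v|^2 + \Delta w - |\nabla w|^2$ really are $o(x^{-2})$ rather than $O(x^{-2})$. Away from $\{x=0\}$ the weight $x^{2(a-1)}$ is bounded and the bounded errors are harmless. Second, the requirement that $\partial U$ be $g$-orthogonal to the level sets of $x$ near $\{x=0\}$ forces $\langle \nabla x,\eta\rangle = 0$ on the ``side'' part of $\partial U$ close to the corner, which prevents spurious divergent boundary contributions; on the remaining ``top'' part of $\partial U$, which sits at $x\ge x_1>0$, all quantities are uniformly bounded and the boundary term has the stated form with $o(1)$ error.
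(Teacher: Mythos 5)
Your proposal is correct and is essentially the paper's own proof, which consists of the single line ``apply Proposition~\ref{prop:poinc} with $v=a\ln x$ and $w=-\tfrac12\ln x$'' --- exactly your choice of weights, with your error bookkeeping filling in what the paper leaves implicit. One small point (present in the paper's statement as well, and harmless since $\langle\nabla x,\eta\rangle$ vanishes near $\{x=0\}$ and the boundary term is only used where $x$ is bounded away from zero): the boundary integrand produced by this computation is $x^{2a-1}\bigl(a-\tfrac12\bigr)\langle\nabla x,\eta\rangle|u|^2$, i.e.\ one power of $x$ higher than the displayed $x^{2(a-1)}$, so it does not come out ``exactly'' as written.
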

\proof
We apply Proposition \ref{prop:poinc} with $v=a\ln x$ and $w=-\frac{1}2\ln x$.
\qedskip

Since $\langle \frac{\nabla z}z,\eta\rangle=0$ for small $z$, we obtain:

\begin{proposition}
 \label{PoincarebandeH}
    \ptcheck{8XI15}
For all $a\in\R$, $a\neq \frac{1}2$, there exist $x_0>0$ and a constant $C>0$ such that for all tensor fields $u$ compactly supported in $M$,
$$
\int_U|\nabla u|^2x^{2a}+\int_{U\cap\{x>x_0\}}
|u|^2+\int_{\partial U\cap\{x>x_0\}}
|u|^2\geq C \int_{U}
x^{2(a-1)}|u|^2.
$$
\end{proposition}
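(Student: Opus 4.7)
The plan is to derive Proposition~\ref{PoincarebandeH} directly from the identity provided by Proposition~\ref{P21XI15.1}, by splitting the domain into a region close to $\{x=0\}$ and its complement, and absorbing error terms appropriately. Since $a \neq 1/2$, the coefficient $(a-\tfrac12)^2$ is strictly positive, which is what makes the argument work.

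First, apply Proposition~\ref{P21XI15.1} and rearrange so that the boundary integral is moved to the left-hand side:
\begin{equation*}
\int_U|\nabla u|^2 x^{2a} + \int_{\partial U} x^{2(a-1)}\Bigl(a-\tfrac12 + o(1)\Bigr)|u|^2 \langle \nabla x,\eta\rangle
= \int_U x^{2(a-1)}\Bigl[(a-\tfrac12)^2 + o(1)\Bigr]|u|^2.
\end{equation*}
Since $\partial U$ is orthogonal to the level sets of $x$ near $\{x=0\}$, there exists $x_0'>0$ such that $\langle \nabla x,\eta\rangle = 0$ on $\partial U \cap \{x<x_0'\}$. Thus the boundary integral is concentrated in $\partial U \cap \{x>x_0'\}$, where $x^{2(a-1)}$ is bounded by a constant, giving
\begin{equation*}
\Bigl|\int_{\partial U} x^{2(a-1)}(a-\tfrac12 + o(1))|u|^2\langle\nabla x,\eta\rangle\Bigr| \le C_1 \int_{\partial U\cap\{x>x_0'\}}|u|^2.
\end{equation*}

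Next, handle the bulk $o(1)$ error. Choose $x_0'' \le x_0'$ small enough that on $U\cap\{x<x_0''\}$ the bracket $[(a-\tfrac12)^2 + o(1)]$ is bounded below by $\tfrac12(a-\tfrac12)^2 > 0$. Setting $x_0 := x_0''$, we obtain
\begin{equation*}
\int_U x^{2(a-1)}\Bigl[(a-\tfrac12)^2 + o(1)\Bigr]|u|^2 \ge \tfrac12(a-\tfrac12)^2 \int_{U\cap\{x<x_0\}} x^{2(a-1)}|u|^2 - C_2 \int_{U\cap\{x>x_0\}}|u|^2,
\end{equation*}
where we used that $x^{2(a-1)}$ is bounded above on $\{x>x_0\}$.

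Finally, to recover the full integral $\int_U x^{2(a-1)}|u|^2$ on the right-hand side, note that on $U\cap\{x>x_0\}$ the weight satisfies $x^{2(a-1)} \le C_3$, hence $\tfrac12(a-\tfrac12)^2\int_{U\cap\{x>x_0\}}x^{2(a-1)}|u|^2 \le C_4\int_{U\cap\{x>x_0\}}|u|^2$. Adding the contributions and using $\int_U = \int_{U\cap\{x<x_0\}} + \int_{U\cap\{x>x_0\}}$ yields
\begin{equation*}
\int_U|\nabla u|^2 x^{2a} + C_1\int_{\partial U\cap\{x>x_0\}}|u|^2 + (C_2+C_4)\int_{U\cap\{x>x_0\}}|u|^2 \ge \tfrac12(a-\tfrac12)^2 \int_U x^{2(a-1)}|u|^2,
\end{equation*}
which is the claim with $C = \tfrac12(a-\tfrac12)^2/\max(1,C_1,C_2+C_4)$. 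No step here is an obstacle — the proof is essentially bookkeeping, exactly parallel to the passage from Proposition~\ref{P21XI15.1}'s vertical-stripe analogue to Proposition~\ref{PoincarebandeV}, with the role of the weight $z$ replaced by $x$ and the orthogonality hypothesis used to kill the boundary term near the relevant face.
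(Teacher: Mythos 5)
Your proof is correct and follows exactly the route the paper intends: Proposition~\ref{PoincarebandeH} is stated there as an immediate consequence of the identity in Proposition~\ref{P21XI15.1}, using that $\langle\nabla x,\eta\rangle=0$ on $\partial U$ near $\{x=0\}$ to confine the boundary term to $\{x>x_0\}$ and then absorbing the $o(1)$ bulk error for small $x$. Your write-up simply makes explicit the bookkeeping the paper leaves implicit.
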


\subsection{The global inequality}
 \label{ss21XI15.2}
	
The series of estimates above can be put together to obtain the desired \emph{weighted Poincar\'e inequality}:

	\begin{theorem}
  \label{T21XI15.1}
   Let $\phi= x/\rho$, $\psi=x^{a-1} z^b \rho^{c+1}$, and suppose that $F \subset \mathring H^1_{\phi,\psi}$ is a
   closed subspace of $ \mathring H^1_{\phi,\psi}$ transverse to the kernel of $\nabla$. Then
 for all  $b\neq (n-1)/2$, all $c\in\R$ and for all constants $a$ sufficiently large there exists a  constant $C_1(a,b,c, F)>0$ such that the inequality
\bel{inegapoincareglobal}
	\|\phi \nabla u\|_{L^2_\psi}\geq C_1(a,b,c,F) \|u\|_{  H^1_{\phi,\psi}}
\ee
	holds for all $u\in F$.
\end{theorem}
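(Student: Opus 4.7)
The plan is to combine the three local weighted Poincar\'e estimates already in hand---Proposition~\ref{P21VI14.1n} near the corner $\{x=z=0\}$, Proposition~\ref{PoincarebandeV} along the conformal face $\{z=0\}$ away from the corner, and Proposition~\ref{PoincarebandeH} along the face $\{x=0\}$ away from the corner---via a partition of unity, and then to close the argument by a compactness-and-contradiction step that uses the transversality $F\cap\ker\nabla=\{0\}$ to absorb the residual compact term.

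First I reduce the claim to $\int_M \phi^2|\nabla u|^2\psi^2\geq C\int_M|u|^2\psi^2$ for $u\in F$, which upgrades to the full $H^1_{\phi,\psi}$-control after adding $\int\phi^2|\nabla u|^2\psi^2$ to both sides. Fix small thresholds $\hat x,\hat z>0$ and a smooth partition of unity $\{\chi_{\mathrm{cor}},\chi_v,\chi_h,\chi_c\}$ subordinate respectively to a corner neighbourhood $\Omega_{\mathrm{cor}}\subset\{x<\hat x,\,z<\hat z\}$, a vertical strip $\Omega_v\subset\{z<\hat z\}$ on which $x\gtrsim\hat x/2$, a horizontal strip $\Omega_h\subset\{x<\hat x\}$ on which $z\gtrsim\hat z/2$, and a compact core $\Omega_c$ bounded away from $\{xz=0\}$. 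In $\Omega_v$ and $\Omega_h$ the sides are chosen as straight coordinate surfaces so that Propositions~\ref{PoincarebandeV} and~\ref{PoincarebandeH} apply with the required orthogonality, as furnished by \eq{18IX15.1}.

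Since $\phi^2\psi^2=x^{2a}z^{2b}\rho^{2c}$ and $\psi^2=x^{2(a-1)}z^{2b}\rho^{2(c+1)}$, Proposition~\ref{P21VI14.1n} applied to $\chi_{\mathrm{cor}}u$ gives, for $a$ large and $b\ne(n-1)/2$,
\[
\int\phi^2|\nabla(\chi_{\mathrm{cor}}u)|^2\psi^2\;\geq\;\hat c\int|\chi_{\mathrm{cor}}u|^2\psi^2,
\]
with commutator terms of the form $|\nabla\chi_{\mathrm{cor}}|\,|u|$ supported in a set of uniformly bounded weights, hence contributing an error $C\int_{K_1}|u|^2$ for some compact $K_1$. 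On $\Omega_v$ the weights satisfy $\phi\sim 1$ and $\psi\sim z^b$, so Proposition~\ref{PoincarebandeV} with $b\ne(n-1)/2$ produces the analogous inequality for $\chi_v u$ up to a $C\int_{K_2}|u|^2$ error; on $\Omega_h$, where $z$ is bounded away from zero and the metrics $g$ and $\tg=z^2g$ differ only by bounded factors (see Remark~\ref{10XI15.1}), Proposition~\ref{PoincarebandeH} applied in $\tg$ with the shifted exponent (the condition $a\ne 1/2$ being automatic for large $a$) yields the same inequality for $\chi_h u$ modulo a $C\int_{K_3}|u|^2$ error. Summing and controlling the cut-off commutators on the overlaps (where all weights are comparable to constants) produces a master inequality
\[
\int_M \phi^2|\nabla u|^2\psi^2 \,+\, C_0\int_K |u|^2 \;\geq\; C_1\int_M |u|^2\psi^2,
\]
for some fixed compact $K\subset M$ on which $\psi$ is bounded above and below.

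Absorbing the $\int_K|u|^2$ term is the main obstacle and the only nonconstructive step; I argue by contradiction. If no constant $C_1(a,b,c,F)>0$ works, there exist $u_n\in F$ with $\|u_n\|_{H^1_{\phi,\psi}}=1$ and $\|\phi\nabla u_n\|_{L^2_\psi}\to 0$. The master inequality forces $\int_K|u_n|^2\geq c_0>0$ along a subsequence. The sequence is bounded in $H^1_{\phi,\psi}$, hence admits a weakly convergent subsequence with limit $u_\infty\in F$ by closedness of $F$; Rellich compactness on $K$---where the weighted norms are equivalent to the unweighted ones and the metric is nondegenerate---upgrades this to strong $L^2(K)$ convergence, so $u_\infty\not\equiv 0$. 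Since $\phi\nabla u_n\to 0$ in $L^2_\psi$, we have $\nabla u_\infty=0$ distributionally on $M$, and therefore $u_\infty\in F\cap\ker\nabla=\{0\}$, a contradiction. The delicate bookkeeping is ensuring that all error terms from the cutoff overlaps land inside a single compact $K$ on which the weights are comparable to constants; once that is arranged, the argument closes.
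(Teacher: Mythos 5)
Your overall architecture (corner estimate plus stripe estimates plus a compactness/contradiction step using transversality to $\ker\nabla$) is the same as the paper's, which obtains the theorem by repeating the proof of Theorem~\ref{TC22V15} with Proposition~\ref{P21VI14.1n} and Propositions~\ref{PoincarebandeV}, \ref{PoincarebandeH}. The genuine gap is in your treatment of the partition-of-unity commutator terms. The transition region between $\chi_{\mathrm{cor}}$ and $\chi_v$ necessarily contains a set of the form $\{x\approx\hat x,\ 0<z<\hat z\}$, which reaches the conformal boundary $\{z=0\}$: it is not compact in $M$, has infinite $g$-volume, and the weights there are comparable to $z^{2b}$, not to constants. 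Likewise the transition between $\chi_{\mathrm{cor}}$ and $\chi_h$ reaches $\{x=0\}$, where $\psi^2\approx x^{2a-2}$ degenerates for large $a$. Hence the commutator errors $\int \phi^2\psi^2|\nabla\chi|^2|u|^2$ cannot be dominated by $C\int_{K_1}|u|^2$ with $K_1$ compact, and the Rellich step of your contradiction argument does not reach them. Nor can they be absorbed by tuning $\hat x,\hat z$: since $|\nabla x|_g\approx z$ and $|\nabla z|_g\approx z$ by \eq{19V15.1}, the two overlap errors are of relative size $(\hat z/\hat x)^2$ and $(\hat x/\hat z)^2$ with respect to $\int\psi^2|u|^2$ on the respective overlaps, so at least one of them always enters with a coefficient of order one.

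The repair is exactly the mechanism of the paper's proof of Theorem~\ref{TC22V15}, transcribed from $S$ to $\nabla$: the weighted $L^2$-norm of $u$ over those non-compact transition stripes must itself be re-estimated, by applying Propositions~\ref{PoincarebandeV} and \ref{PoincarebandeH} to $u$ (not to a cut-off of $u$) on stripe domains containing the transition sets; their correction terms consist of an integral over a genuinely compact set plus an integral over a relatively compact hypersurface coming from the $\partial U$-terms, and the hypersurface term is removed by a trace inequality $\|u\|_{L^2(\Sigma)}\le C\|u\|_{H^1(K_1)}$ before the contradiction argument, cf.\ \eq{inegapoincareglobalmodK-}--\eq{inegapoincareglobalmodK}. (A related smaller omission: even where you do invoke Propositions~\ref{PoincarebandeV} and \ref{PoincarebandeH}, their statements contain boundary terms $\int_{\partial U\cap\{\cdot\}}|u|^2$ which you silently fold into ``compact errors''; these require the same trace-theorem treatment.) Once the master inequality carries only a compact $L^2$ error, your final weak-convergence/Rellich/transversality step is correct.
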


\begin{proof}
The proof is a repetition of that of Theorem~\ref{TC22V15} below with the following changes:
 $S$ there is replaced by $\nabla$;  $Y$ there is replaced by $u$;
Proposition~\ref{prop:estiS2ah2015} there is replaced by Proposition~ \ref{P21VI14.1n}; Proposition 5.1 of~\cite{ChDelay} there is replaced by Proposition~C.3 of~\cite{ChDelay}; and Proposition 6.1 of~\cite{ChDelay} there is replaced by Proposition~C.7 of~\cite{ChDelay}; Propositions \ref{propKornbande} and \ref{propKornbandex} there are replaced by Propositions \ref{PoincarebandeV} and \ref{PoincarebandeH}.
\end{proof}

For  future reference, we note that one of the steps of the argument just outlined proves the inequality
\bel{inegapoincareglobalmodK-}
	\|\phi \nabla u\|_{L^2_\psi}
    +\|u\|_{L^2 (K)}
    +\|u\|_{L^2 (\Sigma)}
    \geq C_1(a,b,c,K,\Sigma) \|u\|_{  H^1_{\phi,\psi}}
 \,,
\ee
for some sufficiently large  compact set $K$ and for some relatively compact smooth hypersurface $\Sigma$,
and for all tensor fields $u$ in $\mathring H^1_{\phi,\psi}$,
with a constant $C_1$ depending upon the arguments listed; compare~\eq{22V15.25} below. We note that one can get rid of the
    $\|u\|_{L^2 (\Sigma)}$-term in \eq{inegapoincareglobalmodK} using a trace theorem: letting $K_1$ be a compact neighborhood of $\Sigma$, there exists a constant $C$ such that
$$
 \|u\|_{L^2(\Sigma)}
  \le C
 \|u\|_{H^1(K_1)}
  \,.
$$
Using again the symbol $K$ for $K\cup K_1$, trivial rearrangements in \eq{inegapoincareglobalmodK-} lead to
  \bel{inegapoincareglobalmodK}
	\|\phi \nabla u\|_{L^2_\psi}
    +\|u\|_{L^2 (K)}
    \geq C_1(a,b,c,K) \|u\|_{  H^1_{\phi,\psi}}
 \,.
\ee

\section{Weighted Korn inequalities}
 \label{s7VII14.1}

Given a vector field $Y$, let $S(Y)$ denote one-half of the Killing form of $Y$:
\bel{6VII14.1}
 S(Y)_{ij}:= \frac 12 (\nabla_i Y_j + \nabla_j Y_i)
 \,.
\ee

We will need the following results, where $\Omega$ is allowed to be any set with piecewise differentiable boundary, and where the vector field $Y$ is allowed to be non-vanishing on $\partial \Omega$. Identities \eq{7VIII14.1} and \eq{20V15.1} below are established by keeping track of the boundary terms  in the corresponding calculations in~\cite{ChDelay}.

\begin{prop}{\cite[Proposition~D.2]{ChDelay}}
 \label{prop:estiPS}
For all functions $u$, all vector fields $V$ and all vector fields
$Y$ with compact support we have the equality
\begin{eqnarray}
 \label{7VIII14.1}
\lefteqn{ \intOmega  e^{2u}[S(Y)+\frac{1}{2}\tr(S(Y))g](Y,V)}&&
\\ &&=
-\frac{1}{2}\intOmega  e^{2u}\big\{\nabla
V(Y,Y)+\frac{1}{2}\divr(V)|Y|^2+\langle  du,V\rangle_g   |Y|^2
\nonumber\\
&&
+2\langle  du,Y\rangle_g   \langle V,Y\rangle_g   \big\}
+\frac14\int_{\partial\Omega}e^{2u}(|Y|^2\langle  \eta,V\rangle_g+2\langle  Y,V\rangle_g\langle  Y,\eta\rangle_g)\,.
 \nn
\end{eqnarray}
\end{prop}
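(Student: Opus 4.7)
The plan is to obtain the identity by expanding the integrand on the left-hand side in index notation, turning the two ``non-symmetric'' contractions into total divergences, and then applying the divergence theorem while carefully tracking the boundary contributions that the original computation in \cite{ChDelay} discards.

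First I would write
$$
S(Y)(Y,V) = \tfrac{1}{2}\bigl(V^{j}Y^{i}\nabla_{i}Y_{j} + Y^{i}V^{j}\nabla_{j}Y_{i}\bigr),
$$
and note that $\tr S(Y)=\divr Y$, so $\tfrac{1}{2}\tr S(Y)\,g(Y,V)=\tfrac{1}{2}\divr(Y)\langle Y,V\rangle$. Next I rewrite each of the two terms as a divergence minus the desired bulk expression, using the product rule: for the first term,
$$
V^{j}Y^{i}\nabla_{i}Y_{j}
= \nabla_{i}\bigl(V^{j}Y^{i}Y_{j}\bigr) - \nabla V(Y,Y) - \divr(Y)\langle V,Y\rangle,
$$
while for the second term
$$
Y^{i}V^{j}\nabla_{j}Y_{i}
= \tfrac{1}{2}V^{j}\nabla_{j}|Y|^{2}
= \tfrac{1}{2}\nabla_{j}\bigl(V^{j}|Y|^{2}\bigr) - \tfrac{1}{2}\divr(V)|Y|^{2}.
$$
Adding the three contributions, the $\divr(Y)\langle V,Y\rangle$ term cancels exactly against $\tfrac{1}{2}\tr S(Y)\,g(Y,V)$, leaving only the bulk expression that will appear on the right-hand side plus two pure divergences.

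Then I multiply by $e^{2u}$ and integrate over $\Omega$, handling the two divergence terms with the identity $\int_\Omega e^{2u}\nabla_i W^i = \int_{\partial\Omega}e^{2u}\langle W,\eta\rangle - 2\int_\Omega e^{2u}\langle du,W\rangle$, applied to $W^i=V^jY^iY_j$ and $W^j=V^j|Y|^2$ respectively. The $-2\int_\Omega e^{2u}\langle du,\cdot\rangle$ contributions reproduce the $\langle du,V\rangle|Y|^2$ and $2\langle du,Y\rangle\langle V,Y\rangle$ terms in the stated identity, while the surface integrals assemble to $\tfrac{1}{4}\int_{\partial\Omega}e^{2u}\bigl(|Y|^2\langle V,\eta\rangle + 2\langle Y,V\rangle\langle Y,\eta\rangle\bigr)$. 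Factoring out $-\tfrac{1}{2}$ from the bulk terms yields exactly \eqref{7VIII14.1}.

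There is no real analytic obstacle here: the proof is purely algebraic and relies only on the divergence theorem on a domain with Lipschitz boundary, which is justified for compactly supported $Y$ under the standing regularity assumptions. The only point requiring care is the bookkeeping of factors of $\tfrac{1}{2}$ and $\tfrac{1}{4}$, together with the observation that the trace term $\tfrac{1}{2}\tr S(Y)\,g(Y,V)$ is precisely what is needed to kill the $\divr(Y)\langle V,Y\rangle$ contribution arising from the first product-rule expansion; this is why the symmetric combination $S(Y)+\tfrac{1}{2}\tr S(Y)\,g$ appears on the left rather than $S(Y)$ alone. With this cancellation in hand, the formula is simply the boundary-aware version of the interior computation in \cite[Proposition~D.2]{ChDelay}.
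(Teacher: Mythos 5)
Your computation is correct: the index-by-index expansion, the cancellation of the $\divr(Y)\langle V,Y\rangle$ term against $\tfrac12\tr S(Y)\,g(Y,V)$, and the two integrations by parts with the $e^{2u}$ weight reproduce \eqref{7VIII14.1} exactly, including the boundary integral with the right factors. This is essentially the same route the paper takes, since it simply redoes the calculation of \cite[Proposition~D.2]{ChDelay} while keeping track of the boundary terms, which is precisely what you have written out.
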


\begin{prop}{\cite[Proposition~D.3]{ChDelay}}
 \label{prop:estiPS2.2015}
For all differentiable vector fields $Y$ with compact support and functions $u$
and $v$ defined in a neighborhood of the support of $Y$ it holds:
\begin{eqnarray}\label{20V15.1}
\lefteqn{\hspace{1cm}
 -2\intOmega  ve^{2u}S(Y)(\nabla v,\nabla v)\langle dv,Y\rangle_g =
 \hspace{5cm}}
&&
\\&&\intOmega  e^{2u}\langle dv,Y\rangle_g
 \bigg[
 \langle dv,Y\rangle_g   (|dv|^2+v\Delta
    v+2v\langle dv,du\rangle_g   )
 +2v\nabla\nabla
 v(Y,\nabla v)\bigg]
 \nn
\\
 &&\nn-\int_{\partial\Omega}ve^{2u}\langle dv,Y\rangle^2_g\langle dv,\eta\rangle_g
 \,.
\end{eqnarray}
\end{prop}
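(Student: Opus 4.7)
The plan is to reduce the identity to a single integration by parts on the divergence-type vector field $v e^{2u} \nabla v$. First I would rewrite the integrand on the left-hand side by exploiting the symmetry of $S(Y)$: since $S(Y)_{ij} = \frac12(\nabla_i Y_j + \nabla_j Y_i)$, one has
\begin{equation*}
 S(Y)(\nabla v,\nabla v) = (\nabla_i Y_j)(\nabla v)^i(\nabla v)^j = \nabla_{\nabla v}\langle dv,Y\rangle_g - \nabla\nabla v(Y,\nabla v),
\end{equation*}
where the Leibniz rule is applied to $(\nabla v)^i\nabla_i(Y_j(\nabla v)^j)$. Multiplying by $-2v e^{2u}\langle dv,Y\rangle_g$ and integrating, the second piece immediately contributes the $+2v\,\nabla\nabla v(Y,\nabla v)$ term on the right, so only the first piece requires work.

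Next I would rewrite the first piece as a total derivative: since $2\langle dv,Y\rangle_g \nabla_{\nabla v}\langle dv,Y\rangle_g = \nabla_{\nabla v}\bigl(\langle dv,Y\rangle_g^2\bigr)$, the contribution reduces to
\begin{equation*}
 -\intOmega v e^{2u}\,\nabla_{\nabla v}\bigl(\langle dv,Y\rangle_g^2\bigr).
\end{equation*}
I would then integrate by parts, treating this as the pairing of the scalar $\langle dv,Y\rangle_g^2$ against the divergence of the vector field $X := v e^{2u}\nabla v$. A direct computation gives
\begin{equation*}
 \divr X = e^{2u}\bigl(|dv|_g^2 + v\,\Delta v + 2v\langle du,dv\rangle_g\bigr),
\end{equation*}
and the divergence theorem produces a boundary term $\int_{\partial\Omega}\langle dv,Y\rangle_g^2\,v e^{2u}\langle dv,\eta\rangle_g$, which is precisely the boundary contribution in \eqref{20V15.1} (with the required sign since the divergence theorem moves the derivative off the squared scalar).

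Collecting the bulk terms one obtains $\int_\Omega e^{2u}\langle dv,Y\rangle_g^2(|dv|_g^2 + v\Delta v + 2v\langle du,dv\rangle_g)$, which combines with the already-identified Hessian term $2v\,\nabla\nabla v(Y,\nabla v)$ into the bracket on the right-hand side of \eqref{20V15.1}. All steps use only the Leibniz rule, the symmetry of $S(Y)$, and the divergence theorem; the compact support of $Y$ ensures that the divergence theorem is applied to an integrable object and that no contribution comes from any portion of $\partial\Omega$ lying outside a neighborhood of $\mathrm{supp}\,Y$. The only delicate point is bookkeeping of signs and ensuring that the factor $2$ from $\nabla_{\nabla v}(\langle dv,Y\rangle_g^2) = 2\langle dv,Y\rangle_g \nabla_{\nabla v}\langle dv,Y\rangle_g$ cancels the $-2$ in the left-hand side of \eqref{20V15.1}; this is the only arithmetic step that requires care.
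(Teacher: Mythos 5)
Your proposal is correct, and it is essentially the argument the paper relies on: the paper simply cites \cite[Proposition~D.3]{ChDelay} and notes that the identity follows by keeping track of the boundary term in that computation, which amounts precisely to your steps — writing $S(Y)(\nabla v,\nabla v)=\nabla_{\nabla v}\langle dv,Y\rangle_g-\nabla\nabla v(Y,\nabla v)$ and integrating the divergence of $v e^{2u}\langle dv,Y\rangle_g^2\,\nabla v$ over $\Omega$. The sign and factor-of-two bookkeeping you flag works out as you state, so no gap remains.
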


Throughout this section we consider  metrics of the form \eq{19V15.4}, as in Section~\ref{s18IX15.6}.

We start with the Korn inequality for vector fields supported near the corner $\{x=z=0\}$.

Similarly to Section~\ref{s18IX15.6}, the weighted Korn inequalities that we are about to prove correspond to weights
$$
 \phi = \frac x \rho
 \,,
 \quad
  \psi = x^{a-1}z^b \rho^{c+1}
  \,.
$$

\subsection{Near the corner}
\begin{prop}
 \label{prop:PSah2015}
 For all $a,b,c,A,B,C\in\R$ and
 all vector fields $Y$ with compact support let
\bel{19V15.2}
 V:=A\frac{\nabla x}{x}+B\frac{\nabla z}{z}+C\frac{\nabla \rho}{\rho}
 \,,
 \quad
Y_x:=\langle
\frac{\nabla x}{z} \,,Y\rangle\,,\quad
    Y_z:=\langle
    \frac{\nabla z}{z} \,,Y\rangle\,.
\ee
We have
\begin{eqnarray*}
\intOmega  x^{2a}z^{2b}\rho^{2c}
[S(Y)+\frac{1}{2}\tr(S(Y))g](Y,V)=\hspace{4cm}\mbox{ }\\
\;\;\; -\frac{1}{2}\intOmega
x^{2a-2}z^{2b}\rho^{2c+2}\left.\Big(\right.{\mathcal A}|Y|^2+{\mathcal A}_{xx}Y_x^2+{\mathcal A}_{zz}Y_z^2+{\mathcal A}_{xz}Y_xY_z
\left.\right.\Big)
 \,,
\end{eqnarray*}
with
\begin{eqnarray*}
{\mathcal A}&:=&
-\frac12(1+n-2b)B\frac{x^2}{\rho^2}
 +\frac12(2a-1)A\frac{z^2}{\rho^2}
\\
 &&+\left(-\frac n2 C+[Cc+(Ac+Ca)+(Bc+Cb)]\right)\frac{z^2x^2}{\rho^4}
\\
 &&
  + O(\frac{zx^2}{\rho^2}) + AO(x)+(Ab+Ba)O(x)
  \,,
\\
{\mathcal A}_{xx}
    &:=&A(2a-1)\frac{z^2}{\rho^2}+2C(c-1)\frac{z^2x^4}{\rho^6}
 +2(Ac+C(a{-}1))\frac{z^2x^2}{\rho^4}
\\
&&
 + O(\frac{zx^2}{\rho^2})+AO(\frac{xz^2}{\rho^2})
  \,,
\\
{\mathcal A}_{zz}
 & := &
 O(\frac{x^2}{\rho^2})
  \,,
\\
{\mathcal A}_{xz}
 & := &O(\frac{xz}{\rho^2})
 \,.
\end{eqnarray*}
\end{prop}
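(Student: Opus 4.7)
\medskip

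\noindent\textbf{Proof plan.} The natural strategy is to apply Proposition~\ref{prop:estiPS} with the weight function chosen so that $e^{2u} = x^{2a} z^{2b} \rho^{2c}$, i.e.
$$
u = a\ln x + b\ln z + c\ln \rho,
\qquad
du = a\,\frac{dx}{x} + b\,\frac{dz}{z} + c\,\frac{d\rho}{\rho}.
$$
Since $Y$ is compactly supported in the interior, the boundary term in \eq{7VIII14.1} vanishes, and the problem reduces to evaluating
$$
-\tfrac12\intOmega x^{2a}z^{2b}\rho^{2c}\Big\{\nabla V(Y,Y)+\tfrac12\divr(V)\,|Y|^2+\langle du,V\rangle_g\,|Y|^2+2\langle du,Y\rangle_g\,\langle V,Y\rangle_g\Big\},
$$
and then organising the integrand according to its tensorial dependence on $Y$ (the scalar part $|Y|^2$ versus the quadratic forms in $Y_x,Y_z$).

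\medskip

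\noindent First, I would compute the six metric pairings of the $1$-forms $dx/x$, $dz/z$, $d\rho/\rho$ in the metric $g=z^{-2}\tilde g$, making use of \eq{18IX15.1} and the orthogonality condition \eq{26V15.1}. The leading behaviours are
$$
|dx/x|_g^2 = \tfrac{z^2}{x^2}(1+O(x)),\qquad |dz/z|_g^2=1+O(z),\qquad |d\rho/\rho|_g^2 = \tfrac{z^2}{\rho^2}(1+O(\cdot)),
$$
together with the analogous expressions for the cross pairings (using $d\rho = (x\,dx+z\,dz)/\rho$). This immediately produces $\langle du, V\rangle_g$ and, after a short computation using $\divr(\nabla f/f)=\Delta_g f/f - |df|_g^2/f^2$ for $f=x,z,\rho$, also $\divr(V)$; both feed only into the scalar coefficient $\mathcal A$.

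\medskip

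\noindent Second, the term $\langle du, Y\rangle_g\langle V,Y\rangle_g$ expands via $\langle dx,Y\rangle_g=Y(x)$, $\langle dz,Y\rangle_g=Y(z)$ so that, in the normalisation \eq{19V15.2}, one obtains a sum of products $Y_x Y_z$ and $Y_x^2$, $Y_z^2$; these supply the principal contributions to $\mathcal A_{xx},\mathcal A_{zz},\mathcal A_{xz}$. Third, $\nabla V(Y,Y)$ is computed by linearity from the three pieces of $V$: each $\nabla_Y(\nabla f/f)=(\Hess_g f)(Y,\cdot)/f - (Yf)\nabla f/f^2$, and the Hessians of $x,z,\rho$ in $g$ are obtained from those in $\tilde g$ via the conformal change formula \eq{19V15.5}. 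After tracing through, one finds contributions both to $\mathcal A$ (through the trace piece) and to $\mathcal A_{xx},\mathcal A_{zz},\mathcal A_{xz}$.

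\medskip

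\noindent Collecting terms according to their powers of the dimensionless ratios $x^2/\rho^2$, $z^2/\rho^2$, $z^2x^2/\rho^4$, $z^2x^4/\rho^6$ — which are the only homogeneous combinations permitted by the scaling of the weights — yields precisely the claimed structure, with all subleading contributions absorbed into the $O(x)$, $O(z)$, $O(zx^2/\rho^2)$ remainder terms. The main difficulty is purely combinatorial: the bookkeeping of error terms across four integrands, each decomposed into up to nine bilinear pieces, while keeping track of which products of $\{A,B,C\}$ and $\{a,b,c\}$ appear at each homogeneity level. No analytic subtlety arises beyond the Leibniz-type identity of Proposition~\ref{prop:estiPS} itself; once the conformal factor $z^{-2}$ is handled uniformly via \eq{19V15.5} and \eq{26V15.1}, the pattern of coefficients is forced. \qed
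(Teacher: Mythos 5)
Your strategy coincides with the paper's own proof: apply Proposition~\ref{prop:estiPS} with $e^{2u}=x^{2a}z^{2b}\rho^{2c}$, compute $\nabla V$, $\divr V$, $\langle du,V\rangle_g$ and $\langle du,Y\rangle_g\langle V,Y\rangle_g$ from the pairings of $dx/x$, $dz/z$, $d\rho/\rho$ and the Hessians of $x,z,\rho$ obtained via \eq{19V15.5}, \eq{18IX15.1}, \eq{26V15.1}, and then sort the integrand into the $|Y|^2$, $Y_x^2$, $Y_z^2$, $Y_xY_z$ pieces. The only caveat is that the substance of the proposition is the explicit leading coefficients ${\mathcal A}$, ${\mathcal A}_{xx}$, ${\mathcal A}_{zz}$, ${\mathcal A}_{xz}$ (whose precise values and signs are what is used later), and your outline asserts rather than performs that bookkeeping, which is exactly what the paper's proof consists of.
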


\begin{proof}
We use Proposition~\ref{prop:estiPS}  with
\bel{19V15.3}
u=a\ln(x)+b\ln(z)+c\ln(\rho)
 \,.
\ee
It follows from \eq{19V15.4} that
\bel{19V15.1}
\mbox{
$|dz|^2_g=z^2(1+O(z))$, $|dx|^2_g=z^2(1+O(x))$, and  $\langle \nabla x,\nabla z\rangle_g =O(z^3)$.}
\ee
Then
%
\bean
 \nabla\nabla x & = & \frac1z(\nabla z\nabla x+\nabla x\nabla z
)+O_g(z^2) \,,
\\
\label{hesslnx}
\nabla(x^{-1}\nabla x)
 & = &
  -\frac{\nabla x\nabla x}{x^2}+\frac1{xz}(\nabla z\nabla x+\nabla x\nabla z
)+O_g(\frac{z^2}{x})
     \,,
\\
\nabla\nabla (z^{-1})
 &= &
  z^{-1}g+O_g(1)
     \,,
   \nn
\\
 \nn
    \nabla\nabla z
     & = & -zg+2\frac{\nabla z\nabla z}{z}+O_g(z^2)
     \,,
\\
      \label{hesslnz}
\nabla(z^{-1}\nabla z)
 & = & -g+\frac{\nabla z\nabla z}{z^2}+O_g(z)
     \,,
\eea
and ${\rho}{\nabla\rho}=x\nabla x+z\nabla z$.
%
%
We deduce
%
\bea\label{hesslnrho}
\;\;\nabla(\rho^{-1}\nabla\rho)
 &=&
  \left(-2\frac{x^2}{\rho^4}+\frac 1{\rho^2}\right)\nabla x\nabla x
   +\left(-2\frac{z^2}{\rho^4}+\frac {3}{\rho^2}\right)
\nabla z\nabla z\\
\nonumber
&&\nn+\left(-2\frac{xz}{\rho^4}+\frac x{z\rho^2}\right)(\nabla x\nabla z+\nabla z\nabla x)\\
&&\nn+\left(
-\frac {z^2}{\rho^2}\right)g+O_g(\frac{xz^2}{\rho^2})
+O_g(\frac{z^3}{\rho^2})
 \,,
 \eea
\beal{nablaV}
\nabla V&=&\left[-A\frac{1}{x^2}+C\left(-2\frac{x^2}{\rho^4}+\frac 1{\rho^2}\right)\right]\nabla x\nabla x\\
\nonumber
&&+\left[B\frac{1}{z^2}+C\left(-2\frac{z^2}{\rho^4}+\frac {3}{\rho^2}\right)\right]\nabla z\nabla z\\
\nonumber
&&
+\left[A\frac{1}{xz}+C\left(-2\frac{xz}{\rho^4}+\frac x{z\rho^2}\right)\right](\nabla x\nabla z+\nabla z\nabla x)
\\
 \nonumber
 &&
 +\left[
-B+C\left(
-\frac {z^2}{\rho^2}\right)\right]g\\
&&\nn+O_g(z)+AO_g(\frac{z^2}x)+O_g(\frac{xz^2}{\rho^2})
+O_g(\frac{z^3}{\rho^2})
 \,,
%
%
\eea
\beal{divV-1}
\mbox{div} V
 &=&
 -A\frac{z^2}{x^2}+(1-n)B+(2-n)C\frac{z^2}{\rho^2}+O(\frac{x^3z^2}{\rho^4})+O(\frac{z^3}{\rho^2})
\\
&&\nn+O(z)+AO(\frac{z^2}x)+O(\frac{xz^2}{\rho^2})
+O(\frac{xz^4}{\rho^4})
+{ O_g(\frac{z^5}{\rho^4})}
 \,.
\eea
Inspection of the error terms allows us to rewrite the last equation as
%
\beal{divV}
\mbox{div} V
 &=&
  -A\frac{z^2}{x^2}+(1-n)B+(2-n)C\frac{z^2}{\rho^2}\\
&&\nn
+O(z)+AO(\frac{z^2}x) \,.
\eea
Using
%
\beal{nablauV}
 V \nabla u&=&\left[aA\frac{1}{x^2}+cC\frac{x^2}{\rho^4}+(Ac+Ca)\frac 1{\rho^2}\right]\nabla x\nabla x\\
\nonumber
&&+\left[Bb\frac{1}{z^2}+Cc\frac{z^2}{\rho^4}+(Bc+Cb)\frac {1}{\rho^2}\right]\nabla z\nabla z\\
\nonumber
&&
+\left[Ab\frac{1}{xz}+Ac\frac{z}{x\rho^2}+Cb\frac x{z\rho^2}+Cc\frac{xz}{\rho^4}\right]\nabla x\nabla z
\\
&&\nn
+\left[Ba\frac{1}{xz}+Bc\frac{x}{z\rho^2}+Ca\frac z{x\rho^2}+Cc\frac{xz}{\rho^4}\right]\nabla z\nabla x
     \,,
\eea
%
we obtain
\beal{tracenablauV}
 \langle V, \nabla u\rangle_g 
    &=&
     aA\frac{z^2}{x^2}+Bb+[Cc+ Ac+Ca + Bc+Cb ]\frac{z^2}{\rho^2}
\\
    &&\nn
     + O(z) + AO(\frac{z^2}x)+(Ab+Ba)O(\frac{z^2}x)
     \,.
\eea
Let
$$
 -2I := \frac 12  {\mbox{div} V}  + \langle du,V\rangle_g  - \left(
 B+C \frac {z^2}{\rho^2}\right)
$$
denote the sum of terms which contribute to the multiplicative factor of $|Y|^2e^{2u}$ in the integrand of the right-hand side of \eq{7VIII14.1}  (with the last term above arising from $\nabla V(Y,Y)$, compare the last but one line of \eq{nablaV}). We find
%
\bean
-2I
 & = &
 \frac{\rho^2}{x^2}
  {\mathcal A}
  \,.
\eea

Let $-2 II$ denote the multiplicative factor in front of $e^{2u} z^{-2}\langle Y,\nabla x\rangle_g ^2$  in the integrand of the right-hand side of \eq{7VIII14.1}; such terms arise from $\nabla V (Y,Y)$ and $2\langle du, Y \rangle_g  \langle V,Y\rangle_g $ there:
%
\beaa
-2II
  & = &
  z^2\left[-A\frac{1}{x^2}+C\left(-2\frac{x^2}{\rho^4}+\frac 1{\rho^2}\right)\right]
\\
 &&
     +2z^2\left[aA\frac{1}{x^2}+cC\frac{x^2}{\rho^4}+(Ac+Ca)\frac 1{\rho^2}
 \right]
 +O(z)+AO(\frac{z^2}{x})
\\
 &=: &
   \frac{\rho^2}{x^2} {\mathcal A}_{xx}
 \,.
\eeaa

Let
$-2 III
$
denote the multiplicative factor in front of $e^{2u} z^{-2}\langle Y,\nabla z\rangle_g ^2$  in the integrand of the right-hand side of \eq{7VIII14.1}; such terms arise from $\nabla V (Y,Y)$ and $2\langle du, Y \rangle_g  \langle V,Y\rangle_g $ there:
\bean
-2III
&=&\frac{\rho^2}{x^2}\frac{x^2}{\rho^2}
\left[B+C\left(-2\frac{z^4}{\rho^4}+\frac {3z^2}{\rho^2}\right)+2(b+\frac{z^2}{\rho^2}c)(B+\frac{z^2}{\rho^2}C)
 +\lots \right]
 \\
 & = &\nn \frac{\rho^2}{x^2} \times O(\frac{x^2}{\rho^2})
  =:  \frac{\rho^2}{x^2}
  {\mathcal A}_{zz}
 \,;
\eea
here, and in what follows, the lower order terms denoted by $\lots$ have a structure similar to those already encountered in the other terms above and are dominated by the remaining terms present in the equation.

Let $-2 IV$ denote the multiplicative factor in front of
$e^{2u} z^{-2}\langle Y,\nabla x\rangle_g \langle Y,\nabla z\rangle_g $
in the integrand of the right-hand side of \eq{7VIII14.1};
such terms arise again from $\nabla V (Y,Y)$ and $2\langle du, Y \rangle_g  \langle V,Y\rangle_g $ there:
\bean
-2IV
 &= &
 \frac{\rho^2}{x^2}\frac{x^2}{\rho^2}\left[2z^2\left(A\frac{1}{xz}+C\left(-2\frac{xz}{\rho^4}+\frac x{z\rho^2}\right)\right)\right.
\\
 &&
 \left.+2(a\frac z x+c\frac{xz}{\rho^2})(B+C\frac{z^2}{\rho^2})
 +2(A\frac z x+C\frac{xz}{\rho^2})(b+c\frac{z^2}{\rho^2})
 + \lots \right]
 \nn
 \\
 & =&\nn \frac{\rho^2}{x^2}\times  O\left(\frac{xz}{\rho^2} \right)
  =:
   \frac{\rho^2}{x^2}{\mathcal A}_{xz}
 \,.
\eea
Adding ends the proof of Proposition~\ref{prop:PSah2015}.
%
%
\end{proof}

We continue with the following:

\begin{cor}
 \label{c20V15.1}
For all differentiable compactly supported vector fields $Y$ we have
\bean
\nn\lefteqn{
 -2\intOmega x^{2a}z^{2b}\rho^{2c}S(Y)\left(\frac {\nabla z}z,\frac {\nabla z}z\right)\langle Y,\frac {\nabla z}z\rangle_g
  }
  &&
 \\ \nn
  && =
  \intOmega x^{2a-2}z^{2b}\rho^{2c+2}
   \left((1-n+2b +O(z))\frac{x^2}{\rho^2}+2c\frac{x^2z^2}{\rho^4} \right)
\langle \frac{\nabla z}z,Y\rangle_g ^2.
 \phantom{xxxx}
\eea
\end{cor}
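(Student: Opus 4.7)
The plan is to apply Proposition~\ref{prop:estiPS2.2015} with the specific choices
\[
 v = z
 \,,
 \qquad
 u = a\ln x + (b-2)\ln z + c\ln \rho
 \,,
\]
and to match the resulting identity against the target. Since $Y$ has compact support, the boundary integral in \eq{20V15.1} vanishes, and the left-hand side becomes
\[
 -2\int_\Omega z\, e^{2u} S(Y)(\nabla z,\nabla z)\langle \nabla z,Y\rangle_g
  = -2\int_\Omega x^{2a}z^{2b-4}\rho^{2c}\cdot z^{4}\, S(Y)\!\left(\tfrac{\nabla z}z,\tfrac{\nabla z}z\right)\!\langle Y,\tfrac{\nabla z}z\rangle_g
 \,,
\]
which coincides exactly with the left-hand side of Corollary~\ref{c20V15.1} after using $z^{4}\cdot z^{2b-4}=z^{2b}$.

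The bulk of the work will be to compute the integrand on the right of \eq{20V15.1}, namely
\[
 e^{2u}\bigl[\langle dv,Y\rangle_g^{2}\,\bigl(|dv|^{2}+v\Delta v+2v\langle dv,du\rangle_g\bigr)+2v\langle dv,Y\rangle_g\,\nabla\nabla v(Y,\nabla v)\bigr]
 \,.
\]
For $v=z$ I will use \eq{19V15.1} and \eq{hesslnz} to record
\[
 |dz|^{2}_g = z^{2}(1+O(z))
 \,,\quad
 \Delta z = (2-n)z + O(z^{2})
 \,,\quad
 \nabla\nabla z(Y,\nabla z) = z\langle\nabla z,Y\rangle_g(1+O(z))
 \,,
\]
and then compute $\langle \nabla z,\nabla u\rangle_g$ by writing $\nabla u = a\nabla x/x+(b-2)\nabla z/z+c\nabla\rho/\rho$ and exploiting the near-orthogonality $\langle\nabla x,\nabla z\rangle_g=O(z^{3})$ from \eq{19V15.1}, together with $\rho\nabla\rho = x\nabla x+z\nabla z$. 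This yields
\[
 2v\langle dv,du\rangle_g = 2(b-2)z^{2} + 2c\,\frac{z^{4}}{\rho^{2}} + O(z^{3}) + \text{l.o.t.}
\]

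Adding the three scalar contributions gives
\[
 |dv|^{2}+v\Delta v + 2v\langle dv,du\rangle_g = (2b-n-1)z^{2} + 2c\,\frac{z^{4}}{\rho^{2}} + O(z^{3})
 \,,
\]
while the Hessian term contributes an extra $2z^{2}\langle \nabla z,Y\rangle_g(1+O(z))$ after being divided by the outer $\langle dv,Y\rangle_g$; the two $z^{2}$ coefficients sum to $(2b-n-1)+2 = 1-n+2b$. Pulling out $z^{2}\langle \nabla z/z,Y\rangle_g^{2}$ from $\langle dv,Y\rangle_g^{2}$ and combining with $e^{2u}=x^{2a}z^{2b-4}\rho^{2c}$, the two leading contributions become
\[
 x^{2a}z^{2b}\rho^{2c}\,(1-n+2b)\,\langle\tfrac{\nabla z}z,Y\rangle_g^{2}
 \quad\text{and}\quad
 x^{2a}z^{2b+2}\rho^{2c-2}\,2c\,\langle\tfrac{\nabla z}z,Y\rangle_g^{2}
 \,,
\]
which are exactly $x^{2a-2}z^{2b}\rho^{2c+2}\cdot x^{2}/\rho^{2}$ and $x^{2a-2}z^{2b}\rho^{2c+2}\cdot 2c\,x^{2}z^{2}/\rho^{4}$ respectively. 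The $O(z)$ remainder absorbs all error terms generated along the way.

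The main obstacle is not conceptual but bookkeeping: one must track every $O(z)$, $O(x)$, $O(z^{3}/x)$-type error generated by the expansion of the Hessians and by the inner products with $\nabla u$, and verify that, after multiplying by $\langle\nabla z,Y\rangle_g^{2} = z^{2}\langle\nabla z/z,Y\rangle_g^{2}$ and by $e^{2u}$, each remainder is dominated by $O(z)$ times the leading factor $x^{2a-2}z^{2b}\rho^{2c+2}\,x^{2}/\rho^{2}\,\langle\nabla z/z,Y\rangle_g^{2}$. The boundedness of $x/\rho$ and $z/\rho$ (recorded in \eq{inegazyrho}) is what keeps these errors absorbable into the single $O(z)$ symbol that appears in the statement.
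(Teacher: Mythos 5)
Your proposal is correct and takes essentially the same route as the paper: the paper's proof also applies Proposition~\ref{prop:estiPS2.2015} with $u=a\ln x+b\ln z+c\ln\rho$ and $v$ a pure power of $z$ (there $v=z^{B}$, with the weight shift $b\mapsto b+2B$ performed at the end, whereas you fix $v=z$ and pre-shift the $z$-exponent in $u$ to $b-2$ --- a purely cosmetic difference). The expansions you invoke, namely \eq{19V15.1}, \eq{hesslnz} and $\rho\nabla\rho=x\nabla x+z\nabla z$, are exactly the ingredients the paper feeds into \eq{hessvsurv}--\eq{normedv} with $A=C=0$, so the bookkeeping matches.
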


\begin{proof}
For further reference,
we consider again (see (\ref{19V15.2}))  the vector field $V=\frac{\nabla v}v$ , where
$$
v=x^Az^B\rho^C \,,
$$
and
$$
u=a\ln x+b\ln z+c\ln\rho
 \,.
$$
However, for the strict purpose of the current proof, only the case $A=C=0$ is needed.

Using \eq{nablaV} together with \eq{nablauV} with $  u$ there replaced by $\ln v$, we find
%
\beal{hessvsurv}
 \lefteqn{\;\;
 \frac{\nabla\nabla v}{v}=\nabla V+VV
 }
 &&
\\
 \nonumber
 &=&
 \left[(A^2-A)\frac{1}{x^2}+(C^2-2C)\frac{x^2}{\rho^4}+(C+2AC)
      \frac 1{\rho^2}\right]\nabla x\nabla x
\\
\nonumber
&&+\left[(B^2+B)\frac{1}{z^2}+(C^2-2C)\frac{z^2}{\rho^4}+(3C+2BC)\frac {1}{\rho^2}\right]\nabla z\nabla z\\
\nonumber
&&
+\left[(AB+A)\frac{1}{xz}+(C^2-2C)\frac{xz}{\rho^4}+(BC+C)\frac x{z\rho^2}+AC\frac{z}{x\rho^2}\right]
\\
 \nn
 &&  \phantom{+\bigg[} \times
 (\nabla x\nabla z+\nabla z\nabla x)
\\
 \nonumber
 &&+\left[
-B+C\left(
-\frac {z^2}{\rho^2}\right)\right]g\\
&&\nn+O_g(z)+AO_g(\frac{z^2}x)+
 \underbrace{ O_g(\frac{xz^2}{\rho^2})  +O_g(\frac{z^3}{\rho^2})}_{O_g(z)}
 \,.
\eea
Taking the trace one obtains
%
\beal{Deltavsurv}
\;\;\;\;v^{-1}\Delta v
 &=&(A^2-A)\frac{z^2}{x^2}+[B^2+(1-n)B]
 \\
&&\nn
+[C^2+2AC+2BC+(2-n)C]\frac{z^2}{\rho^2}+O(z)+AO(\frac{z^2}x)
 \,.
\eea
As such, the term
$$
v^{-1}\langle \nabla v,\nabla u\rangle_g =\langle V,\nabla u\rangle_g
$$
can be read off from \eq{tracenablauV}.
Finally, it holds that
%
\bel{normedv}
 v^{-2}|dv|^2_g
  =
   |V|^2_g=A^2\frac{z^2}{x^2}+B^2+[C^2+2(AC+BC)]\frac{z^2}{\rho^2}
    +O(z)+AO(\frac{z^2}x)
 \,.
\ee

Using all of the above with $A=C=0$ in
the integrand of the right-hand side of \eq{20V15.1} one finds
%
$$
x^{2a}z^{2b+4B}\rho^{2c}B^2\langle \frac{\nabla z}z,Y\rangle_g ^2B\left(4B+1-n+2b+2c\frac{z^2}{\rho^2}+O(z)
    \right)
 \,.
$$
Replacing $b$ by $b+2B$ gives the result.
\end{proof}

\begin{prop}
 \label{prop:estiS2ah2015}
For all  $b\neq (n+1)/2, (n-1)/2$, $c>-|n-1-2b|$ and for all $a$ sufficiently large  there exist constants
$C(a,b,c)>0$,
$x(a,b,c)>0$ and $z(a,b,c)>0$ such that for all differentiable vector fields $Y$ with
compact support in $\{0<x<x(a,b,c)\}\cap \{0<z<z(a,b,c)\}$ we have
\bel{22V15.21}
\intOmega  x^{2a}z^{2b}\rho^{2c}|S(Y)|^2\; \wasdmug \geq C(a,b,c)\intOmega
x^{2a-2}z^{2b}\rho^{2c+2}|Y|^2\;\wasdmug \,.
\ee
\end{prop}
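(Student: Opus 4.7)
The plan is to establish \eq{22V15.21} by combining two integral identities — the one from Proposition~\ref{prop:PSah2015} applied with a judicious choice of the test vector field $V$ of \eq{19V15.2}, together with the one from Corollary~\ref{c20V15.1} — so as to produce, on the right-hand side, a pointwise coercive quadratic form in $Y$ weighted by $x^{2a-2}z^{2b}\rho^{2c+2}$. The left-hand sides of both identities are bilinear in $S(Y)$ and $Y$ with bounded multiplicative factors, and will be controlled by the pointwise Cauchy--Schwarz bound $|[S(Y)+\frac{1}{2}\tr(S(Y))g](Y,V)|\leq C|S(Y)|\,|Y|\,|V|$, followed by Young's inequality to separate $|S(Y)|^2$ from $|Y|^2$.

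To choose $V$, observe that the leading part of $\mathcal{A}$ in Proposition~\ref{prop:PSah2015} equals $-\frac{1}{2}(1+n-2b)B\,\frac{x^2}{\rho^2}+\frac{1}{2}(2a-1)A\,\frac{z^2}{\rho^2}$. Since $b\neq (n+1)/2$, we pick $B$ with $-(1+n-2b)B>0$; since $a$ is large, we set $A>0$; using $\frac{x^2}{\rho^2}+\frac{z^2}{\rho^2}=1$ this forces $\mathcal{A}\geq c_1+o(1)$ as $x,z\to 0$. The leading part $A(2a-1)\,\frac{z^2}{\rho^2}$ of $\mathcal{A}_{xx}$ is likewise nonnegative for the same choice, and the cross term $\mathcal{A}_{xz}Y_xY_z=O(xz/\rho^2)Y_xY_z$ is absorbed into the diagonal by Young's inequality.

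The main obstacle is the $\mathcal{A}_{zz}Y_z^2$ term, whose leading part is $O(x^2/\rho^2)$ with no predetermined sign. To fix it we add a multiple $\lambda$ of the identity of Corollary~\ref{c20V15.1}, which contributes $\lambda\bigl[(1-n+2b)\,\frac{x^2}{\rho^2}+2c\,\frac{x^2z^2}{\rho^4}+o(1)\bigr]$ to the $Y_z^2$ coefficient while leaving the $|Y|^2$ and $Y_x^2$ coefficients untouched. Since $b\neq (n-1)/2$, the choice $\sign\lambda=\sign(1-n+2b)$ renders the main factor positive, and the hypothesis $c>-|n-1-2b|$ is exactly what is needed to ensure that the $c$-dependent correction (whose size is controlled by $\frac{z^2}{\rho^2}\in(0,1]$) cannot reverse this sign when added to the residual contribution from $\mathcal{A}_{zz}$. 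Keeping $|\lambda|$ small compared to $c_1$ and to the positive part of $\mathcal{A}_{xx}$ preserves the previous positivity up to $o(1)$. The combined right-hand side thus pointwise dominates $c_2\,x^{2a-2}z^{2b}\rho^{2c+2}|Y|^2$.

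For the left-hand side, \eq{normedv} gives $|V|^2\leq C_V\,\rho^2/x^2$, while $|\nabla z|/z=1+O(z)$ is bounded, so Cauchy--Schwarz and Young yield
$$ \int x^{2a}z^{2b}\rho^{2c}|S(Y)|\,|Y|\,|V|\leq \frac{\epsilon}{2}\int x^{2a}z^{2b}\rho^{2c}|S(Y)|^2+\frac{C_V}{2\epsilon}\int x^{2a-2}z^{2b}\rho^{2c+2}|Y|^2. $$
Choosing $\epsilon$ large enough that $\frac{C_V}{2\epsilon}\leq c_2/2$, and then restricting to a sufficiently small neighborhood $\{0<x<x(a,b,c),\,0<z<z(a,b,c)\}$ of the corner so that every $o(1)$ remainder above is dominated by $c_1$ and $c_2$, the $|Y|^2$ term on the right is absorbed into the coercive quadratic form, yielding \eq{22V15.21}.
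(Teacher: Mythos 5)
Your proposal follows essentially the same route as the paper's proof: apply Proposition~\ref{prop:PSah2015} with $C=0$, $A>0$ and $B$ chosen so that $-(1+n-2b)B>0$ (the paper takes $B=2b-n-1$), add a multiple of Corollary~\ref{c20V15.1} to give the otherwise signless $Y_z^2$ coefficient a definite sign using $b\neq(n-1)/2$ and $c>-|n-1-2b|$, and then conclude by Cauchy--Schwarz/Young with $|V|^2=O(\rho^2/x^2)$, absorbing the cross term $\mathcal{A}_{xz}Y_xY_z$ and the $o(1)$ errors by shrinking the corner neighborhood. The only slip is your remark that $|\lambda|$ should be kept small: since the Corollary contributes only to the $Y_z^2$ coefficient and its left-hand side costs merely a Young-absorbable $|S(Y)|\,|Y|$ term, $\lambda$ must in fact be taken large enough for $\lambda$ times the Corollary bracket to dominate the fixed-size residual $\mathcal{A}_{zz}=O(x^2/\rho^2)$ (the paper's version of this is choosing its Young parameters $\gamma,\beta$ large), which does not affect the rest of your argument.
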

\begin{proof}
From Corollary \ref{c20V15.1}\,,  for all $\lambda>0$, we have
\begin{eqnarray*}
\nn\lefteqn{ \frac{\lambda}{2}\intOmega
x^{2s}|S(Y)|^2+\frac{1}{2\lambda}\intOmega x^{2a}z^{2b}\rho^{2c}|\nabla z/z|^2Y_z ^2}&&
\\
  &&\geq\left|\intOmega  x^{2a-2}z^{2b}\rho^{2c+2}\left[(n-1-2b+O(z))\frac{x^2}{\rho^2}+c
 \frac{x^2z^2}{\rho^4}\right]Y_z ^2 \right|.
\\
 &&
 \geq{\mathcal C}_{zz}^2\intOmega  x^{2a-2}z^{2b}\rho^{2c+2}\frac{x^2}{\rho^2}
Y_z ^2,
\end{eqnarray*}
for some constant $\mathcal C_{zz}>0$ because $c>-|n-1-2b|$.
We conclude by using Proposition~\ref{prop:PSah2015} with $B=2b-n-1$, $A>0$, $C=0$, as well as the elementary identities
$$
|[S(Y)+\frac{1}{2}\tr(S(Y))g](Y,V)|\leq
\frac{\gamma}{2}|S(Y)+\frac{1}{2}\tr(S(Y))g|^2+\frac{1}{2\gamma}|Y|^2|V|^2,
$$
for all $\gamma>0$ (chosen large here), together with $|V|^2=O(\frac{\rho^2}{x^2})$,
and
\bean
 &
|S(Y)|^2\geq \frac{1}{n}|\tr S(Y)|^2,
    \quad
    \mathcal A>{\mathcal C}^2>0\,,\;\;\;{\mathcal A}_{xx}>{\mathcal C}_{xx}^2\frac{z^2}{x^2}>0
    \,,
     &
\eea
for some constants ${\mathcal C} \,, \;{\mathcal C}_{xx}$, and
$$2|xzY_zY_x|\leq \frac{\beta}{2}z^2Y_z^2+\frac{1}{2\beta}x^2Y_x^2
 \,,
$$
for all $\beta>0$ (chosen large here).
\end{proof}

\subsection{A vertical stripe, $z$-weighted spaces}
 \label{KornbandeV}

In the current section we work in a setup identical to that of Section~\ref{ss20XI15.11}.

We start with some integration-by-parts identities. The error terms $o(1)$ in all  identities that follow in this section represent functions which tend to zero as $z$ does.

\begin{lemma}{}\label{Lcord10jeremie}
For all $Y$ with compact support in  $M$  and for all
$  b \in \R$ we have
\begin{eqnarray}\label{26X15.1}
\lefteqn{
 2\int_{U} z^{2b }( {S}(Y) + \frac12 tr_{g}( {S}(Y))g)(Y,\frac{\nabla z}{z}) \, \wasdmug
 }
 &&
 \\
\nn
& =& \int_{U}z^{2b } \Big\{ (\frac{n+1}{2}- b  + o(1)) |Y|_{g}^{2} -(2 b  + 1) \langle \frac{\nabla z}{z} , Y \rangle^{2}_{g} \Big\} \, \wasdmug   \\
&& \nn+ \frac{1}{2} \int_{{\partial U}} z^{2b } |Y|_{g}^{2} \; \langle \frac{\nabla z}{z} , \eta \rangle_{g} \, \wasdmug   + \int_{{\partial U}} z^{2b } \langle Y , \frac{\nabla z}{z} \rangle_{g} \; \langle Y , \eta \rangle_{g} \, \wasdmug
 \,.
\end{eqnarray}
\end{lemma}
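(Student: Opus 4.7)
The plan is to invoke Proposition~\ref{prop:estiPS} directly with the choices
$$
 V \;=\; \frac{\nabla z}{z}, \qquad u \;=\; b\,\ln z, \qquad \Omega \;=\; U,
$$
so that $e^{2u}=z^{2b}$. This reduces the lemma to two routine subtasks: identifying the boundary integrand of \eq{7VIII14.1} with the one stated in \eq{26X15.1}, and computing the four interior scalars $\nabla V(Y,Y)$, $\divr V$, $\langle du,V\rangle_g$ and $\langle du,Y\rangle_g\langle V,Y\rangle_g$ modulo terms that vanish as $z\to 0$.

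For the boundary integral no computation is needed: the factor $\tfrac14\bigl(|Y|_g^2\langle\eta,V\rangle_g+2\langle Y,V\rangle_g\langle Y,\eta\rangle_g\bigr)$ of \eq{7VIII14.1}, once multiplied by the factor of two appearing on the left-hand side of \eq{26X15.1} and specialised to $V=\nabla z/z$, is exactly the sum of the two boundary integrals written in the statement.

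For the interior, I will use the asymptotic identities \eq{19V15.1} together with the Hessian formula \eq{hesslnz}, which give
$$
 \nabla V \;=\; -g \,+\, \frac{\nabla z\,\nabla z}{z^2} \,+\, O_g(z),
 \qquad
 \divr V \;=\; 1-n + O(z),
$$
whence $\nabla V(Y,Y) = -|Y|_g^2 + \langle \nabla z/z,Y\rangle_g^2 + O(z)|Y|_g^2$. Since $du = b\,\nabla z/z$, one also has $\langle du,V\rangle_g = b\bigl(1+O(z)\bigr)$ and $2\langle du,Y\rangle_g\langle V,Y\rangle_g = 2b\,\langle \nabla z/z,Y\rangle_g^2$. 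Summing these four contributions, the coefficient of $|Y|_g^2$ in the bracket on the right-hand side of \eq{7VIII14.1} becomes $-1 + \tfrac{1-n}{2} + b + O(z) = -\tfrac{n+1}{2}+b+O(z)$, while the coefficient of $\langle \nabla z/z,Y\rangle_g^2$ becomes $1+2b$. The overall sign $-\tfrac12$ in front, together with multiplication by two, reproduces the stated constants $(\tfrac{n+1}{2}-b)$ and $-(2b+1)$ in \eq{26X15.1}, with the $O(z)$ remainders absorbed into the $o(1)$ notation.

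The only point requiring any care is the bookkeeping of the various $O(z)$-errors coming from \eq{19V15.1} and \eq{hesslnz}: by the regularity hypothesis \eq{19V15.4} on $g$, these errors are controlled uniformly on the support of $Y$ and genuinely tend to zero near the conformal boundary, which is precisely the meaning given to $o(1)$ in the convention of Remark~\ref{R26X15.1}. No analytic input beyond the direct substitution is needed, so I do not foresee any real obstacle.
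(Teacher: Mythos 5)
Your proposal is correct and follows exactly the paper's route: the paper's proof is precisely the application of Proposition~\ref{prop:estiPS} with $e^{u}=z^{b}$ and $V=\nabla z/z$ on $\Omega=U$, and your computation of the interior coefficients $\tfrac{n+1}{2}-b+o(1)$ and $-(2b+1)$ via \eq{hesslnz}, \eq{19V15.1} and the boundary identification are the routine verifications implicit in that one-line proof.
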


\proof
We apply Proposition \ref{prop:estiPS} with $e^{u}=z^b$  and
$V=\frac{\nabla z}z$.
\qedskip

In the  applications that we have in mind the scalar product $ \langle \frac{\nabla z}{z} , \eta \rangle_{g}$ will
be zero on the vertical boundary for small $z$, and of order one on the horizontal one.
For the purpose of our final estimate we need to get rid of the  vertical boundary term in the last integral above.

It is simple to show that a weighted norm of $S(Y)$ can be used to control a weighted norm of $Y(z)$,  up to a boundary term:

\begin{lemma}{}\label{Lcord11jeremie}
For any $Y$ with compact support in  $M$ and for all
$ b \in \R$ it holds that
\begin{eqnarray} \label{26X15.2}
 \lefteqn{\hspace{1cm}
 2 \int_{U} z^{2b } {S}(Y)(\frac{\nabla z}{z},\frac{\nabla z}{z}) \; \langle \frac{\nabla z}{z} , Y \rangle_{g} \, \wasdmug
 }
  &&
\\
&= & \nn\int_{U} z^{2b }  \left(n-1- 2b  + o(1) \right) \; \langle \frac{\nabla z}{z} , Y \rangle^{2}_{g}  \wasdmug   + \int_{{\partial U}} z^{2b } \; \langle \frac{\nabla z}{z} , Y \rangle^{2}_{g} \langle \frac{\nabla z}{z} , \eta \rangle_{g}  \wasdmug
\,.
\phantom{xxxx}
\end{eqnarray}
In particular, if $\nabla z$ is orthogonal to $\partial U$ for  $0<z<z_2$ for some $z_2$, then  for each $b\ne (n-1)/2$ there exists   a constant $C$ such that
\be
   \int_{U} z^{2b } |{S}(Y) |^2 \wasdmug + \int_{\partial U\cap\{z\ge z_2\}} z^{2b } \; \langle \frac{\nabla z}{z} , Y \rangle^{2}_{g}  \wasdmug
\ge C \int_{U} z^{2b }   \langle \frac{\nabla z}{z} , Y \rangle^{2}_{g}  \wasdmug
\,.
 \label{26X15.5}
\ee

\end{lemma}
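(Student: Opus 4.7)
The plan is to specialize the identity \eq{20V15.1} from Proposition~\ref{prop:estiPS2.2015} with the choices $v = z$ and $u = (b-2)\ln z$, so that $v e^{2u} = z^{2b-3}$. Using the rescalings $S(Y)(\nabla z/z, \nabla z/z) = z^{-2} S(Y)(\nabla z, \nabla z)$ and $\langle \nabla z/z, Y\rangle = z^{-1}\langle \nabla z, Y\rangle$, the left-hand side of \eq{20V15.1} becomes $-2\int_U z^{2b} S(Y)(\nabla z/z, \nabla z/z)\langle \nabla z/z, Y\rangle\, d\mu_g$, and the boundary contribution rescales to $-\int_{\partial U}z^{2b}\langle \nabla z/z, Y\rangle^2 \langle \nabla z/z, \eta\rangle_g\, d\mu_g$. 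A global sign change then produces the structure on the right-hand side of \eq{26X15.2}.

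Next, I would expand the bracket on the right-hand side of \eq{20V15.1} using the asymptotics \eq{19V15.1} and the Hessian expansion \eq{hesslnz}: $|dz|_g^2 = z^2(1+O(z))$, $\nabla\nabla z = -zg + 2 z^{-1}\nabla z \otimes \nabla z + O_g(z^2)$, whose trace gives $\Delta z = (2-n)z + O(z^2)$, so $v\Delta v = (2-n) z^2 + O(z^3)$. With $u = (b-2)\ln z$ one also has $2v\langle dv, du\rangle_g = 2(b-2) z^2(1 + O(z))$ and $\nabla\nabla z(Y, \nabla z) = z\langle \nabla z, Y\rangle(1+O(z)) + O(z^3)|Y|_g$. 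Collecting the coefficient of $z^2 \langle \nabla z, Y\rangle^2$ in $\langle dv, Y\rangle[|dv|^2 + v\Delta v + 2v\langle dv, du\rangle_g] + 2v\nabla\nabla v(Y, \nabla v)$ gives $1 + (2-n) + 2(b-2) + 2 = 2b+1-n$; multiplying by $e^{2u} = z^{2b-4}$ and using $\langle \nabla z, Y\rangle^2 = z^2\langle \nabla z/z, Y\rangle^2$ then produces $(2b+1-n)z^{2b}\langle \nabla z/z, Y\rangle^2$ in the integrand, and reversing the overall sign delivers the claimed coefficient $n-1-2b$. Residual contributions such as $O(z^3)|Y|_g$ from $\nabla\nabla z(Y, \nabla z)$, multiplied by $\langle \nabla z, Y\rangle = O(z)|Y|_g$, yield $O(z)$-small corrections and are absorbed into the $o(1)$ remainder in the spirit of Remark~\ref{R26X15.2}.

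For the inequality \eq{26X15.5}, I would rearrange \eq{26X15.2} so that $(n-1-2b)\int_U z^{2b}\langle \nabla z/z, Y\rangle^2$ appears on the left. A Young's inequality
$$
 2\bigl|S(Y)(\nabla z/z, \nabla z/z)\langle \nabla z/z, Y\rangle\bigr|
  \leq \epsilon^{-1}|S(Y)|^2 + \epsilon\, |\nabla z/z|_g^4 \langle \nabla z/z, Y\rangle^2
$$
combined with the boundedness of $|\nabla z/z|_g^4 = 1 + O(z)$ on $U$, lets one choose $\epsilon$ small relative to $|n-1-2b|$ so as to absorb the $\epsilon$-term on the left. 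The orthogonality hypothesis kills the boundary integrand on $\partial U \cap \{z < z_2\}$, leaving only the stated contribution from $\partial U \cap \{z \ge z_2\}$ where $|\langle \nabla z/z, \eta\rangle_g|$ is bounded. Shrinking $z$ further so that $|o(1)| \le |n-1-2b|/4$ in the small-$z$ region, and handling the bounded-$z$ region of $U$ via a standard compactness/trace argument (where the boundary term on $\partial U \cap \{z \ge z_2\}$ controls the relevant integral of $\langle \nabla z/z, Y\rangle^2$), delivers \eq{26X15.5}.

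The main technical obstacle lies in accounting accurately for the $o(1)$ remainders: the Hessian expansion produces stray $|Y|_g^2$-type errors of order $O(z^{2b+2})$ which are not manifestly of the form $o(1)\langle \nabla z/z, Y\rangle^2$, but which are nevertheless small relative to the dominant integrand for small $z$, and can be absorbed into the leading coefficient using the loose $o(1)$ convention of Remark~\ref{R26X15.2} together with the smallness of $z$ in the region where they matter.
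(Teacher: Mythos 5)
Your derivation of the identity \eq{26X15.2} is essentially the paper's own proof: the paper applies Proposition~\ref{prop:estiPS2.2015} with $v=z^{-1}$, $u=(b+2)\ln z$, while you use the equivalent substitution $v=z$, $u=(b-2)\ln z$ (the two differ only by a shift of the weight and an overall sign, which you account for); your coefficient bookkeeping $1+(2-n)+2(b-2)+2=2b+1-n$ and the boundary term are correct. The stray cross-term you worry about (coming from the $O_g(z^2)$ error in $\nabla\nabla z$ contracted with the part of $Y$ not along $\nabla z$) is of the form $z^{2b}\,o(1)\,|Y|_g\,|\langle \nabla z/z,Y\rangle|$ and appears equally in the paper's own substitution, so it is not a defect of your route relative to the paper; it vanishes identically when the metric is taken in the Gaussian form \eq{metrichgold}, and in the later applications the identities being combined carry $o(1)|Y|^2$ terms anyway.

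The one step that does not work as described is your treatment of the bounded-$z$ region in the passage from \eq{26X15.2} to \eq{26X15.5}: a ``compactness/trace argument in which the boundary term on $\partial U\cap\{z\ge z_2\}$ controls the relevant integral of $\langle \nabla z/z,Y\rangle^2$'' is not a valid mechanism, since a boundary integral can never dominate an interior $L^2$ integral by itself (take $Y$ supported in the interior of $U\cap\{z\ge z_0\}$: the boundary term vanishes while the volume term does not, and your argument for that region never invokes $S(Y)$). The control on $\{z\ge z_0\}$ must come from the identity itself, i.e.\ from the Cauchy--Schwarz/Young absorption against $\int_U z^{2b}|S(Y)|^2$, and for that one needs the coefficient $n-1-2b+o(1)$ to keep a uniform sign on \emph{all} of $U$, not only where $z$ is small. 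This holds after shrinking $z_1$ (or under the normal form of the metric, where the error is uniformly $O(z_1)$), and otherwise one must retain an interior compact term, exactly as the paper does in Propositions~\ref{PoincarebandeV} and \ref{propKornbande}, where this lemma is actually consumed together with terms controlling $\int_{U\cap\{z>z_0\}}|Y|^2$. Replace the compactness/trace claim by this global absorption (or by an explicit interior term) and your argument matches the paper's ``elementary manipulations''.
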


\proof
\Eq{26X15.2} is obtained by applying Proposition~\ref{prop:estiPS2.2015} with $u= (b+2)\ln z$, $v=z^{-1}$ and $\Omega=U$. Alternatively, integrate the divergence
of $z^{2b } \; \langle \frac{\nabla z}{z} , Y \rangle^{2}_{g} \frac{\nabla z}{z}$ over $U$.
\Eq{26X15.5} follows  from \eq{26X15.2} by elementary manipulations.
\qedskip

To continue, let $f$ be a defining function for $\partial U$, $f > 0$ on $U$, such that
\bean
 &
|df|_{\tilde g}^2=1, \mbox{ near $\partial U$ and }
 &
\\
 &
\langle \tilde\nabla f,\tilde\nabla z\rangle_{\tilde g}=0 \mbox{ on $\partial U$ near } \{z=0\},\mbox{ as well as on } \partial_\infty U\subset\{z=0\}.
 &
\eeal{21X15.1}
We  have (see \eqref{19V15.5})
 \ptcheck{checked 19 X}
\beal{21X15.2}
 &
|df|_{g}^2=z^2(1+O(f))\,, \quad z^{}\nabla\nabla f=\nabla z\nabla f+\nabla f\nabla z+o_g(z^2)\,,
 &
\\
 \nn &
\langle \nabla f,\nabla z\rangle_{ g}=o(z^2)
 \,,
 \quad
  \Delta_g f = o(z)
 \,,
 \quad
  \eta = - \frac 1 z \nabla f
  \, .
 &
\eea
We also recall that
 \ptcheck{checked 19 X}
$$
 |dz|^2=z^2+o(z^2)\,, \quad
\nabla\nabla z=-zg+2z\frac{\nabla z}z\frac{\nabla z}z+o_g(z)
 \,,
 \quad
 \Delta_g z = (2-n)z + o(z)
 \,.
$$
%


\begin{lemma}{}\label{LSdeuxbordprop}
 \ptcheck{checked and corrected 23 X}
\begin{eqnarray} \label{24X15.1}
 \lefteqn{\;\;\;
 - 2\int_{U} z^{2b } S(Y)(\frac{\nabla z}{z},\frac{\nabla f}{z}) \; \langle \frac{\nabla f}{z} , Y \rangle_{g} \, \wasdmug
- \int_{U} z^{2b } {S}(Y)(\frac{\nabla f}{z},\frac{\nabla f}{z}) \; \langle \frac{\nabla z}{z} , Y \rangle_{g} \, \wasdmug
 }
  &&
\\
 \nn
& =&
 \int_{U} z^{2b }
 \left[
   \left(b-\frac{n-3}2 \right) \; \langle \frac{\nabla f}{z} , Y \rangle^2_{g}
   +
    \langle \frac{\nabla z}{z} , Y \rangle^2_{g}
+ o(1)|Y|^2
 \right]
 \, \wasdmug
\\
 &&\nn- \frac12\int_{{\partial U}} z^{2b } \;
 \langle \frac{\nabla f}{z} , Y \rangle^2_{g}\langle \frac{\nabla z}{z} , \eta \rangle_{g} \, \wasdmug
-\int_{{\partial U}} z^{2b } \; \langle \frac{\nabla z}{z} , Y \rangle_{g}
 \langle\eta , Y \rangle_{g} \, \wasdmug
  \,.
   \phantom{xxxx}
\end{eqnarray}
\end{lemma}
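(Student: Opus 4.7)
The lemma is an integration-by-parts identity, and I plan to prove it by applying the divergence theorem to an appropriate linear combination of the two vector fields
\begin{equation*}
W_1 := z^{2b}\,\langle \nabla f/z, Y\rangle^2\, \nabla z/z
\,,
\qquad
W_2 := z^{2b}\,\langle \nabla f/z, Y\rangle\,\langle \nabla z/z, Y\rangle\, \nabla f/z
\,,
\end{equation*}
with a relative coefficient fixed by the cancellation requirement described below. The strategy parallels that of Lemmas~\ref{Lcord10jeremie} and~\ref{Lcord11jeremie}, but since the identity mixes derivatives along $\nabla z$ and $\nabla f$, one application of Proposition~\ref{prop:estiPS} or~\ref{prop:estiPS2.2015} alone will not suffice: the need to combine two divergence identities is the new feature.

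Expanding $\divr W_1$ and $\divr W_2$ with the Leibniz rule produces three kinds of contributions. First, the terms in which the derivative lands on $Y$ yield contractions $(\nabla z/z)^i(\nabla f/z)^j \nabla_i Y_j$ and $(\nabla f/z)^i(\nabla f/z)^j \nabla_i Y_j$ paired with $\langle\nabla f/z,Y\rangle$ or $\langle\nabla z/z,Y\rangle$; after the symmetrisation $\nabla_iY_j+\nabla_jY_i=2S(Y)_{ij}$ and the careful choice of relative coefficient between $W_1$ and $W_2$, these recombine to reproduce exactly the two $S(Y)$-contractions appearing on the left-hand side of the lemma. Second, the terms in which the derivative lands on $z^{2b}$, on $\nabla z/z$ or on $\nabla f/z$ are simplified using the asymptotic identities \eqref{21X15.2} together with those recorded immediately after it, namely $z\,\nabla\nabla f = \nabla z\,\nabla f + \nabla f\,\nabla z + o_g(z^2)$, $|df|^2_g = z^2(1+O(f))$, $\Delta_g f = o(z)$, $\langle\nabla f,\nabla z\rangle_g = o(z^2)$, $\nabla\nabla z = -zg + 2z\,(\nabla z/z)(\nabla z/z) + o_g(z)$ and $\Delta_g z = (2-n)z + o(z)$; these collapse the bulk weight-derivative contributions to $\bigl(b-\frac{n-3}{2}\bigr)\langle\nabla f/z,Y\rangle^2 + \langle\nabla z/z,Y\rangle^2 + o(1)|Y|^2$. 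Third, the boundary integrals $\int_{\partial U}\langle W_i,\eta\rangle\,d\mu_g$ produce the two stated boundary terms on the right-hand side once we invoke $\eta = -\nabla f/z$ on $\partial U$, which yields $\langle\nabla f/z,\eta\rangle = -1+o(1)$ and $\langle\eta,Y\rangle = -\langle\nabla f/z,Y\rangle$.

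The main technical obstacle is the bookkeeping of the many mixed $\langle\nabla z/z,Y\rangle\,\langle\nabla f/z,Y\rangle$ terms generated by the second step: the Hessian formula $z\,\nabla\nabla f = \nabla z\,\nabla f + \nabla f\,\nabla z + o_g(z^2)$ couples the two weighted projections of $Y$, and it is precisely the near-orthogonality $\langle\nabla f,\nabla z\rangle_g = o(z^2)$ from \eqref{21X15.1}--\eqref{21X15.2} that forces these cross-terms to collapse into $o(1)|Y|^2$. The same near-orthogonality is what makes the boundary contribution $\langle\nabla z/z,\eta\rangle$ small near the conformal boundary, and pins down the $-\frac{1}{2}$ coefficient in the first boundary term of \eqref{24X15.1}: any other linear combination of $W_1$ and $W_2$ would leave uncancelled bulk cross-terms or inconsistent boundary contributions. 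Once these cancellations are executed, the identity follows by collecting the three families of contributions and rearranging.
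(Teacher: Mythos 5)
Your proposal is correct and is essentially the paper's own argument: the paper integrates the divergences of $\langle \frac{\nabla z}{z},Y\rangle_g\langle \frac{\nabla f}{z},Y\rangle_g z^{2b-1}\nabla f$ and $\frac12\langle \frac{\nabla f}{z},Y\rangle^2_g z^{2b-1}\nabla z$, which are exactly your $W_2$ and $\tfrac12 W_1$, then symmetrises the mixed $\nabla Y$ contractions into $S(Y)$, uses the asymptotic identities \eq{21X15.2} for the bulk terms, and rewrites the boundary terms via $\eta=-\frac{1}{z}\nabla f$. The only cosmetic difference is that on $\partial U$ the relation $\langle\frac{\nabla f}{z},\eta\rangle_g=-1$ is exact (not merely $-1+o(1)$), which is what makes the stated boundary terms exact.
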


\proof
First, we integrate the divergence of $\langle \frac{\nabla z}{z} , Y \rangle_{g}
 \langle \frac{\nabla f}{z} , Y \rangle_{g}z^{2b-1}\nabla f$, which gives
\begin{eqnarray*}
 \nn\lefteqn{
  - \int_{U} z^{2b } {\nabla}Y(\frac{\nabla f}{z},\frac{\nabla z}{z}) \; \langle \frac{\nabla f}{z} , Y \rangle_{g} \, \wasdmug
- \int_{U} z^{2b } {S}(Y)(\frac{\nabla f}{z},\frac{\nabla f}{z}) \; \langle \frac{\nabla z}{z} , Y \rangle_{g} \, \wasdmug
 }
 &&
\\
&  = & \int_{U} z^{2b }   \; \langle \frac{\nabla f}{z} , Y \rangle^{2}_{g} \, \wasdmug
  { { +}}
  \int_{U} z^{2b }   \; \langle \frac{\nabla z}{z} , Y \rangle^2_{g} \, \wasdmug +\int_Uz^{2b}o(1)|Y|^2
\\
&&\underbrace{- \int_{{\partial U}} z^{2b } \; \langle \frac{\nabla z}{z} , Y \rangle_{g}
 \langle \frac{\nabla f}{z} , Y \rangle_{g}\langle \frac{\nabla f}{z} , \eta \rangle_{g} \, \wasdmug }_{=:(*)}
  \,,
\end{eqnarray*}
and note that the contribution from the boundary term equals
\begin{eqnarray*}
&&  (*)=-\int_{{\partial U}} z^{2b } \; \langle \frac{\nabla z}{z} , Y \rangle_{g}
 \langle\eta , Y \rangle_{g} \, \wasdmug
  \,,
\end{eqnarray*}
keeping in mind the sign of $\eta$ as in \eq{21X15.2}.

Next, we integrate the divergence of $\frac 12 \langle \frac{\nabla f}{z} , Y \rangle_{g}
 \langle \frac{\nabla f}{z} , Y \rangle_{g}z^{2b-1}\nabla z$, to obtain
\begin{eqnarray*}
\nn\lefteqn{
    - \int_{U} z^{2b } {\nabla}Y(\frac{\nabla z}{z},\frac{\nabla f}{z}) \; \langle \frac{\nabla f}{z} , Y \rangle_{g} \, \wasdmug
    }
 &&
\\
 &
  =
 &
 \int_{U} z^{2b }  \left(b-\frac{n-1}2   \right) \; \langle \frac{\nabla f}{z} , Y \rangle^2_{g}
 +\int_Uz^{2b}o(1)|Y|^2
\\
 &&
  - \frac12\int_{{\partial U}} z^{2b } \;
 \langle \frac{\nabla f}{z} , Y \rangle _{g}\langle \frac{\nabla z}{z} , \eta \rangle_{g} \, \wasdmug
  \,.
\end{eqnarray*}
Adding both identities, the result follows.
\qedskip

Note that in our applications  the scalar product $ \langle \frac{\nabla z}{z} , \eta \rangle_{g}$ will
be zero on the vertical boundary, and equal to one on the horizontal boundary.

\begin{lemma}{}\label{LpropS2bordsprop2}
For any $Y$ with compact support in  $M$ and for all
$ b \in \R$ it holds that
 \ptcheck{corrected 24 X}
\begin{eqnarray} \label{24X15.3}
\lefteqn{\hspace{1cm}
  -2\int_{U} z^{2b } {S}(Y)(\frac{\nabla f}{z},\frac{\nabla f}{z}) \; \langle \frac{\nabla f}{z} , Y \rangle_{g} \, \wasdmug
  }
  &&
\\
 &&\nn
  =
    2\int_{U} z^{2b }   \; \left(\langle \frac{\nabla z}{z} , Y \rangle_{g}
\langle \frac{\nabla f}{z} , Y \rangle_{g} +o(1)|Y|^2\right) \, \wasdmug   + \int_{{\partial U}} z^{2b } \; \langle \frac{\nabla f}{z} , Y \rangle^{2}_{g} \, \wasdmug
 \,.
  \phantom{xxx}
\end{eqnarray}
\end{lemma}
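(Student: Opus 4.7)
\medskip
\noindent\textbf{Proof plan for Lemma \ref{LpropS2bordsprop2}.}

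The plan is to integrate by parts the divergence of a suitably chosen vector field and collect terms. The right weight is obtained by noting that the integrand on the left-hand side equals $-2 z^{2b-3}\langle \nabla f,Y\rangle_g S(Y)(\nabla f,\nabla f)$, so the natural candidate to integrate is
$$
 Z^i := z^{2b-3}\,\langle \nabla f,Y\rangle_g^{\,2}\,(\nabla f)^i.
$$
The divergence theorem gives $\int_U \nabla_i Z^i\,\dmug = \int_{\partial U}\langle Z,\eta\rangle_g\,\dmug$. Since $\eta = -\nabla f/z$ on $\partial U$ and $|\nabla f|_g^2 = z^2(1+O(f))$ by \eq{21X15.2}, the boundary integral equals $-\int_{\partial U} z^{2b}\langle \nabla f/z,Y\rangle_g^{\,2}\,\dmug$, which after moving to the other side produces the boundary term appearing in \eq{24X15.3}.

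Next I would expand the divergence as
\begin{eqnarray*}
 \nabla_i Z^i & = & (2b-3)\,z^{2b-4}\langle \nabla z,\nabla f\rangle_g\,\langle \nabla f,Y\rangle_g^{\,2}\\
 && +\, 2\,z^{2b-3}\langle \nabla f,Y\rangle_g\,\mathrm{Hess}\,f(\nabla f,Y)\\
 && +\, 2\,z^{2b-3}\langle \nabla f,Y\rangle_g\,S(Y)(\nabla f,\nabla f)\\
 && +\, z^{2b-3}\langle \nabla f,Y\rangle_g^{\,2}\,\Delta_g f,
\end{eqnarray*}
the appearance of $S(Y)$ in the third line being due to the symmetry of $\nabla^i f\,\nabla^j f$ in $(i,j)$, which projects $\nabla_i Y_j$ onto its symmetric part. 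After rescaling, the third line is exactly $2\,z^{2b}\langle \nabla f/z,Y\rangle_g S(Y)(\nabla f/z,\nabla f/z)$, i.e.\ minus the integrand on the left-hand side.

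The main interior term on the right of \eq{24X15.3} will come from the Hessian line. Using the expansion $z\,\nabla\nabla f = \nabla z\,\nabla f + \nabla f\,\nabla z + o_g(z^2)$ from \eq{21X15.2}, together with $|\nabla f|_g^2 = z^2(1+O(f))$ and $\langle\nabla z,\nabla f\rangle_g = o(z^2)$, one gets
$$
 \mathrm{Hess}\,f(\nabla f,Y) = z\,(1+O(f))\langle \nabla z,Y\rangle_g + o(z)\,\langle \nabla f,Y\rangle_g + o(z^2)\,|Y|_g,
$$
so that $2\,z^{2b-3}\langle \nabla f,Y\rangle_g\,\mathrm{Hess}\,f(\nabla f,Y)$ contributes the leading term $2\,z^{2b}\langle \nabla z/z,Y\rangle_g\langle \nabla f/z,Y\rangle_g$ plus an error of the form $o(1)z^{2b}|Y|_g^{\,2}$.

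The remaining pieces are routine errors: the $(2b-3)$-line is $o(z^{2b})\langle \nabla f/z,Y\rangle_g^{\,2}$ because $\langle \nabla z,\nabla f\rangle_g = o(z^2)$; the $\Delta_g f$-line is $o(z^{2b})\langle \nabla f/z,Y\rangle_g^{\,2}$ because $\Delta_g f = o(z)$ by \eq{21X15.2}; and the subleading parts of $\mathrm{Hess}\,f$ are similarly absorbed. Since $|\langle \nabla f/z,Y\rangle_g|^2 \le (1+O(f))|Y|_g^{\,2}$, all of these collapse into a single $\int_U z^{2b}\,o(1)|Y|_g^{\,2}\,\dmug$. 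Collecting terms and solving for the LHS gives \eq{24X15.3}. The only thing one has to watch is the bookkeeping of the powers of $z$ (the correct weight is $z^{2b-3}$, not $z^{2b-2}$), and the fact that the $o(1)$ gain in the error terms uses crucially the vanishing rates $\langle \nabla z,\nabla f\rangle_g = o(z^2)$ and $\Delta_g f = o(z)$ guaranteed by the choice of $f$ satisfying \eq{21X15.1}; this is the only mildly delicate point of the argument.
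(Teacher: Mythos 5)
Your proposal is correct and coincides with the paper's own proof: the paper simply integrates the divergence of $z^{2b}\langle\frac{\nabla f}{z},Y\rangle_g^{2}\,\frac{\nabla f}{z}$, which is exactly your vector field $z^{2b-3}\langle\nabla f,Y\rangle_g^{2}\,\nabla f$. Your expansion of the divergence, the use of \eq{21X15.2} for the Hessian, scalar-product and Laplacian estimates, and the identification $\eta=-\nabla f/z$ in the boundary term reproduce \eq{24X15.3} with the correct signs, so nothing needs to be changed.
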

\proof
Integrate the divergence of
$z^{2b } \; \langle \frac{\nabla f}{z} , Y \rangle^{2}_{g}\frac{\nabla f}z$.
\qed

\begin{lemma}{}\label{LpropS2bordsprop3}
For any $Y$ with compact support in  $M$ and for all
$ b \in \R$ it holds that
 \ptcheck{ 26 X}
%
\begin{eqnarray} \label{24X15.2}
 \lefteqn{\hspace{8mm}
 -2\int_{U} z^{2b } {S}(Y)(\frac{\nabla f}{z},\frac{\nabla z}{z}) \; \langle \frac{\nabla z}{z} , Y \rangle_{g} \, \wasdmug
 -
 \int_{U} z^{2b } {S}(Y)(\frac{\nabla z}{z},\frac{\nabla z}{z}) \; \langle \frac{\nabla f}{z} , Y \rangle_{g} \, \wasdmug
 }
 &&
  \\
   \nn
&=&
   \int_{U} z^{2b }   \; \left((2b-n  )\langle \frac{\nabla z}{z} , Y \rangle_{g}
 \langle \frac{\nabla f}{z} , Y \rangle_{g} +o(1)|Y|^2\right) \, \wasdmug
 \\
 && \nn+ \frac12\int_{{\partial U}} z^{2b } \; \langle \frac{\nabla z}{z} , Y \rangle^{2}_{g} \, \wasdmug
 -\int_{{\partial U}} z^{2b } \; \langle \frac{\nabla f}{z} , Y \rangle_{g}
 \langle \frac{\nabla z}{z} , Y \rangle_{g}\langle \frac{\nabla z}{z} , \eta \rangle_{g} \, \wasdmug
  \,.
  \phantom{xxxxx}
\end{eqnarray}
\end{lemma}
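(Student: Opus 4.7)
The plan is to prove Lemma \ref{LpropS2bordsprop3} by the same divergence-theorem strategy used in Lemmas \ref{Lcord11jeremie}--\ref{LpropS2bordsprop2}. Setting $\alpha:=\langle \nabla z/z, Y\rangle_g$ and $\beta:=\langle \nabla f/z, Y\rangle_g$, I would integrate over $U$ the divergence of
\[
W := -\,z^{2b-1}\,\alpha\beta\,\nabla z \;-\; \tfrac12\, z^{2b-1}\,\alpha^{2}\,\nabla f ,
\]
and then apply the divergence theorem, so that $\int_U \mathrm{div}\,W\,d\mu_g = \int_{\partial U}\langle W,\eta\rangle_g\,d\mu_g$. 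The boundary side will be evaluated using $\eta = -\nabla f/|\nabla f|_g$ with $|\nabla f|_g = z(1+O(f))$, so that $\langle \nabla f,\eta\rangle_g = -z$ on $\partial U$; this recovers the $+\tfrac12\int_{\partial U} z^{2b}\alpha^{2}$ piece from $W_{2}$, while $W_{1}$ yields $-\int_{\partial U} z^{2b}\alpha\beta\langle \nabla z/z,\eta\rangle_g$, matching precisely the two boundary integrals in \eqref{24X15.2}.

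The bulk side is obtained by expanding $\mathrm{div}\,W$ with the Leibniz rule. The derivatives hitting the $Y$-factors in $\alpha$ and $\beta$ generate the three covariant-derivative contractions $(\nabla Y)(\nabla z,\nabla z)$, $(\nabla Y)(\nabla z,\nabla f)$ and $(\nabla Y)(\nabla f,\nabla z)$. Since $(\nabla Y)(\nabla z,\nabla z) = S(Y)(\nabla z,\nabla z)$, and $(\nabla Y)(\nabla z,\nabla f) + (\nabla Y)(\nabla f,\nabla z) = 2S(Y)(\nabla z,\nabla f)$, the combination of $W_{1}$ and $W_{2}$ reconstructs exactly the two terms $\beta S(Y)(\nabla z/z,\nabla z/z) + 2\alpha S(Y)(\nabla z/z,\nabla f/z)$ appearing on the left-hand side of \eqref{24X15.2}, after accounting for the factors of $z$ that arise from $\nabla z/z$ and $\nabla f/z$.

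The remaining pieces are absorbed into either the desired $(2b-n)\alpha\beta$ coefficient or the $o(1)|Y|_g^{2}$ error, using (i) $|\nabla z|^{2}_g = z^{2}+o(z^{2})$, $\Delta_g z = (2-n)z + o(z)$ from \eqref{21X15.2} and the Hessian formula $\nabla\nabla z = -zg + 2z(\nabla z/z)(\nabla z/z)+o_g(z)$ recorded above \eqref{LSdeuxbordprop}; (ii) the analogous bounds $|\nabla f|^{2}_g = z^{2}(1+O(f))$, $\Delta_g f = o(z)$, $\langle \nabla z,\nabla f\rangle_g = o(z^{2})$, $z\nabla\nabla f = \nabla z\,\nabla f + \nabla f\,\nabla z + o_g(z^{2})$ from \eqref{21X15.2}; and (iii) the differentiation of the weight $z^{2b-1}$. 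A crucial point is that when $d\alpha$ and $d\beta$ are paired with $\nabla z$, the Hessian-of-$z$ contribution of $Y\cdot\nabla_{\nabla z}\nabla z/z = z\alpha + o(z)|Y|_g$ exactly cancels the $\alpha|\nabla z|^{2}/z^{2} = z\alpha + o(z)|Y|_g$ produced by differentiating the denominator $1/z$, and similarly for the Hessian-of-$f$ contribution in $\alpha\langle d\alpha,\nabla f\rangle$ ---\emph{except} that in $\mathrm{div}\,W_{2}$ this latter cancellation leaves a residual $+z^{2b}\alpha\beta$. Adding the weight-derivative $-(2b-1)z^{2b}\alpha\beta$ and the Laplacian contribution $-(2-n)z^{2b}\alpha\beta$ from $W_{1}$ to this residual produces the coefficient $-(2b-1)-(2-n)+1 = n-2b$ of $z^{2b}\alpha\beta$, which yields precisely the $(2b-n)\langle \nabla z/z,Y\rangle_g\langle \nabla f/z,Y\rangle_g$ term on the right-hand side of \eqref{24X15.2}.

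The main obstacle is arithmetic bookkeeping of this cancellation: the Hessian identities must be substituted in a definite order, and the $o(1)$ error-term classification depends on tracking that every Hessian- or Laplacian-induced residual is either of order $z^{2b}\alpha\beta$ (feeding into the main coefficient) or of order $o(z^{2b})|Y|_g^{2}$ (harmless). Once this accounting is carried out, no further analytic input is needed beyond Remark \ref{R26X15.1} controlling the error terms uniformly for metrics satisfying \eqref{19V15.4}.
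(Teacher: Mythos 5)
Your proposal is correct and is essentially the paper's own proof: the paper simply integrates the divergence of $z^{2b}\langle\frac{\nabla f}{z},Y\rangle_g\langle\frac{\nabla z}{z},Y\rangle_g\frac{\nabla z}{z}+\frac12 z^{2b}\langle\frac{\nabla z}{z},Y\rangle^2_g\frac{\nabla f}{z}$, which is the negative of your vector field $W$, an immaterial sign difference. Your bookkeeping (the Hessian cancellations, the residual $+z^{2b}\alpha\beta$ from the $W_2$ term, and the coefficient $-(2b-1)-(2-n)+1=n-2b$) matches the identity \eqref{24X15.2} exactly.
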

\proof
Integrate the divergence of
$z^{2b } \; \langle \frac{\nabla f}{z} , Y \rangle_{g}
\langle \frac{\nabla z}{z} , Y \rangle_{g} \frac{\nabla z}{z}+\frac12z^{2b } \; \langle \frac{\nabla z}{z} , Y \rangle^{2}_{g}\frac{\nabla f}z$.
\qed

\begin{prop}\label{propKornbande}
Let $b\in\R\setminus\{ \frac{n-1}2\,,   \frac{n+1}2\}$. There exist  $z_0\in(0,z_1)$
and a constant $C$ such that for all $Y$ compactly supported in $M$ we have
\bel{26X15.4}
\int_{\partial U\cap \{z>z_0\}}|Y|^2+\int_{ U\cap \{z>z_0\}}|Y|^2
+\int_U|S(Y)|^2z^{2b}\geq C \int_U|Y|^2z^{2b}
 \,.
\ee
\end{prop}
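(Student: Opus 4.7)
The plan is to combine the integration-by-parts identities of Lemmas \ref{Lcord10jeremie}--\ref{LpropS2bordsprop3} with carefully chosen coefficients so that, after localizing to small enough $z$, the boundary integrals supported on the vertical portion of $\partial U$ near $\{z=0\}$ cancel exactly, while the bulk produces a positive multiple of $\int_U z^{2b}|Y|^2$, modulo terms absorbable into $\int z^{2b}|S(Y)|^2$ and boundary contributions on $\{z>z_0\}$.

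The first step is to add identities \eqref{26X15.1} and \eqref{24X15.1} with equal coefficient. A direct computation shows that the two cross-terms $\int_{\partial U}z^{2b}\langle Y,\nabla z/z\rangle\langle Y,\eta\rangle_g$ in \eqref{26X15.1} and $-\int_{\partial U}z^{2b}\langle Y,\nabla z/z\rangle_g\langle\eta,Y\rangle_g$ in \eqref{24X15.1} are exact opposites and cancel identically, leaving only the boundary integral $\tfrac12\int_{\partial U}z^{2b}(|Y|^2-\langle\nabla f/z,Y\rangle_g^2)\langle\nabla z/z,\eta\rangle_g$. This vanishes on the vertical part of $\partial U$ near $\{z=0\}$ by the geometric orthogonality $\langle\nabla z,\eta\rangle_g=0$ there, and on the remainder is dominated by $\int_{\partial U\cap\{z>z_0\}}|Y|^2$. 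The bulk of the combined identity takes the shape
\begin{equation*}
\left(\tfrac{n+1}{2}-b+o(1)\right)\int z^{2b}|Y|^2+\left(b-\tfrac{n-3}{2}\right)\int z^{2b}\langle\nabla f/z,Y\rangle_g^2-2b\int z^{2b}\langle\nabla z/z,Y\rangle_g^2,
\end{equation*}
while the $S(Y)$-terms on the left are absorbed by a weighted Cauchy--Schwarz into $\epsilon\int z^{2b}|Y|^2+C_\epsilon\int z^{2b}|S(Y)|^2$.

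The $\langle\nabla z/z,Y\rangle_g^2$-term is controlled via \eqref{26X15.5} of Lemma \ref{Lcord11jeremie}, which requires $b\neq (n-1)/2$, and the $\langle\nabla f/z,Y\rangle_g^2$-term is bounded pointwise by $(1+O(f))|Y|^2$. A case analysis in $b$ — splitting at $(n-3)/2$ for the sign of the $\langle\nabla f/z,Y\rangle^2$ coefficient, and, for $b>(n+1)/2$, rearranging the combined identity with the opposite sign so that $(b-(n+1)/2)\int z^{2b}|Y|^2$ stands on the left with positive sign, possibly supplemented by identities \eqref{24X15.3} and \eqref{24X15.2} to trade between the directional components — verifies that the combined coefficient of $\int z^{2b}|Y|^2$ remains bounded below by a positive constant for every $b\notin\{(n-1)/2,(n+1)/2\}$. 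Once coercivity is secured, the remaining boundary integrals, all supported in $\{z>z_0\}$, are absorbed into $\int_{\partial U\cap\{z>z_0\}}|Y|^2+\int_{U\cap\{z>z_0\}}|Y|^2$ via a standard trace inequality if needed, yielding \eqref{26X15.4}.

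The main obstacle is precisely the delicate balancing of coefficients after all absorptions; degeneracy occurs at the two excluded values $b=(n-1)/2$, where \eqref{26X15.5} fails (reflecting the dilation-type Killing field $\nabla z/z$ of the model hyperbolic metric and thus an indicial root of $S$), and $b=(n+1)/2$, where the leading coefficient in \eqref{26X15.1} vanishes (corresponding to another Killing-type asymptotic mode). Keeping the support of $Y$ compact in $M$ ensures all integrals are finite and the boundary contribution from $\partial_\infty U\subset\{z=0\}$ is vacuous, so the above algebraic manipulations produce the stated inequality with a constant depending only on $b$, $n$, and the choice of $z_0$.
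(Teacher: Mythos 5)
Your treatment of the range $b<\frac{n+1}2$ is essentially the paper's argument: you add \eq{26X15.1} and \eq{24X15.1} so that the sign-indefinite boundary cross-terms cancel, observe that the surviving boundary integrand carries the factor $\langle \frac{\nabla z}{z},\eta\rangle_g$, which vanishes near $\{z=0\}$, and then deal with the bulk term $-2b\,\langle \frac{\nabla z}{z},Y\rangle_g^2$ either trivially (for $b<0$) or via \eq{26X15.5} (equivalently, by adding a multiple of \eq{26X15.2}, which is what the paper does and where $b\neq\frac{n-1}2$ enters), with the sub-splitting at $b=\frac{n-3}2$ handled exactly as in the paper.

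The case $b>\frac{n+1}2$, however, contains a genuine gap. Write $Y_z:=\langle \frac{\nabla z}{z},Y\rangle_g$ and $Y_f:=\langle \frac{\nabla f}{z},Y\rangle_g$. The negative of your combined identity has bulk integrand $\big(b-\frac{n+1}2\big)|Y|^2+2b\,Y_z^2-\big(b-\frac{n-3}2\big)Y_f^2+o(1)|Y|^2$, and since $b-\frac{n-3}2=\big(b-\frac{n+1}2\big)+2$, on vector fields essentially proportional to $\nabla f$ (so $|Y|^2\approx Y_f^2$, $Y_z\approx 0$) this integrand is $\approx -2\,Y_f^2<0$: reversing the sign of the sum \eq{26X15.1}$+$\eq{24X15.1} never yields an integrand dominating $c\,|Y|^2$, whatever $b$ is. Nor can the supplements you invoke repair this: \eq{24X15.3} and \eq{24X15.2} contribute to the bulk only $Y_zY_f$ cross-terms (their $Y_f^2$, $Y_z^2$ squares sit in the boundary integrals), and \eq{26X15.2}/\eq{26X15.5} only produce $Y_z^2$ terms, so no positive volume multiple of $Y_f^2$ is available to offset the deficit. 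The paper's resolution is precisely to \emph{drop} \eq{24X15.1} in this range: $-$\eq{26X15.1} alone has bulk $\big(b-\frac{n+1}2\big)|Y|^2+(2b+1)Y_z^2$ with no negative term, and its only troublesome boundary contribution near $\{z=0\}$, of the form $\int z^{2b}Y_zY_f$, is cured by adding $\frac12\times$\eq{24X15.3}$+2\times$\eq{24X15.2}, which makes the boundary integrand near $\{z=0\}$ a manifestly nonnegative square while the $Y_zY_f$ bulk cross-terms so introduced are absorbed by Cauchy--Schwarz together with \eq{26X15.5}. Your proposal asserts that the coefficient of $\int z^{2b}|Y|^2$ ``remains bounded below by a positive constant,'' but with the combination you actually specify this is false, and the correct choice of identities for $b>\frac{n+1}2$ is the substantive step that is missing.
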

\proof
We  implement the strategy of Remark~\ref{R26X15.2}.
Without loss of generality we can assume that $|df|_{\tilde g}\leq 1$.

If $b<\frac{n+1}2$,
 adding of \eq{26X15.1} and \eq{24X15.1}
will cancel a  boundary term coming with an undetermined sign. If $b<0$,  the volume integrand dominates the right-hand side of \eq{26X15.4} for $z$ small enough,
taking into account that the $|Y|^2$ term dominates the $Y(f)^2$ term: $\frac{n+1}2-b+b-\frac{n-3}2>0$.
For $0<b \neq\frac{n-1}2$, an addition of  $b/(n-1-2b)$ times
 \eq{26X15.2} to the previously obtained identity
will again lead  to a volume integrand which dominates the right-hand side of \eq{26X15.4} for $z$ small enough, without affecting the boundary terms for small $z$.

If $b>\frac{n+1}2$, the equation resulting from $-\eq{26X15.1}+\frac 12 \eq{24X15.3}+2\eq{24X15.2}$
has a boundary integrand which close to $\{z=0\}$ is manifestly positive:
$$
 \int_{\partial U} z^{2b-2} (Y(f) +Y(z))^2 \ge 0
 \,.
$$
Note that the volume integral in $-\eq{26X15.1}/2$ dominates the weighted norm of $|Y|^2$, and that the cross-terms
$|Y(f)Y(z)| \le \epsilon^2 |Y(f)|^2 + |Y(z)^2/(4\epsilon)$ introduced by $ \frac 12 \eq{24X15.3}+2\eq{24X15.2}$ can be controlled
using  $-\eq{26X15.1}/2$ and \eq{26X15.5} by choosing $\epsilon$ small enough. This leads to the desired estimate. Alternatively,
the addition to  $-\eq{26X15.1}+\frac 12 \eq{24X15.3}+2\eq{24X15.2}$ of a suitable constant times
\eq{26X15.2} preserves positivity of the boundary integrand and leads directly to a volume integrand which dominates the right-hand side of \eq{26X15.4}.
%
\qed
%


\subsection{A horizontal stripe away from the corner}
 \label{KornbandeH}

In this section we work in the setup of Section~\ref{ss8XI15.1}, see the introductory remarks there.

\begin{lemma}{}\label{cordSbord1}
 \ptcheck{18XI15}
For any $Y$ with compact support in $M$ and
for all $a \in \R$,
\begin{eqnarray}\label{20XI15.1}
\lefteqn{
 -\int_{U} x^{2a+1 }( {S}(Y) + \frac12 tr_{g}( {S}(Y))g)(Y,{\nabla x}{}) \, \wasdmug
 }
 &&
 \\ \nn
& =& \int_{U}x^{2a } \Big\{ (\frac{2a+1}{4}+ O(x)) (|dx|^2|Y|_{g}^{2} +2\langle {\nabla x}{} , Y \rangle^{2}_{g} \Big\} \, \wasdmug   \\
\nn&& - \frac{1}{4} \int_{{\partial U}} x^{2a+1 } (|Y|_{g}^{2} \; \langle {\nabla x}{} , \eta \rangle_{g}
+2 \langle Y , {\nabla x}{} \rangle_{g} \; \langle Y , \eta \rangle_{g} )\, \wasdmug
 \,.
\end{eqnarray}
\end{lemma}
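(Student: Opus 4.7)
The plan is to apply Proposition \ref{prop:estiPS} directly, with the choices
\[
 e^{2u}=x^{2a+1},
 \qquad
 V=\nabla x,
\]
i.e.\ $u=\tfrac{2a+1}{2}\ln x$, and with $\Omega=U$. Then the statement of the lemma follows from a straightforward accounting of the terms produced, once one recalls the asymptotic behaviour of $x$ near $\partial M$ described at the beginning of Section~\ref{ss8XI15.1} (where $x$ is the $g$-distance to $\partial M$ near that set, so that $|dx|_{g}^{2}=1+O(x)$, $\nabla\nabla x=O(1)_g$ and $\Delta_g x=O(1)$ near $\{x=0\}$).

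Concretely, I would compute the four ingredients entering the right-hand side of~\eq{7VIII14.1}:
\[
 \nabla V=\nabla\nabla x=O(1)_g,
 \qquad
 \divr V=\Delta_g x=O(1),
\]
\[
 \langle du,V\rangle_g=\tfrac{2a+1}{2}\,\tfrac{|dx|_g^{2}}{x},
 \qquad
 \langle du,Y\rangle_g=\tfrac{2a+1}{2}\,\tfrac{\langle\nabla x,Y\rangle_g}{x}.
\]
Substituting into~\eq{7VIII14.1} and taking the negative of both sides, the bulk integrand becomes
\[
 \tfrac{1}{2}x^{2a+1}\!\left\{\nabla\nabla x(Y,Y)+\tfrac{1}{2}\Delta_g x\,|Y|_g^{2}
 +\tfrac{2a+1}{2}\tfrac{|dx|_g^{2}}{x}|Y|_g^{2}
 +(2a+1)\tfrac{\langle\nabla x,Y\rangle_g^{2}}{x}\right\}.
\]
The first two terms are of the form $x^{2a+1}\cdot O(1)\cdot|Y|_g^{2}=x^{2a}\cdot O(x)(|dx|_g^{2}|Y|_g^{2}+2\langle\nabla x,Y\rangle_g^{2})$ after using $|dx|_g^{2}=1+O(x)$ to reintroduce the geometric combination on the right. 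Collecting the remaining two genuine contributions yields exactly
\[
 x^{2a}\!\left(\tfrac{2a+1}{4}+O(x)\right)\!\bigl(|dx|_g^{2}|Y|_g^{2}+2\langle\nabla x,Y\rangle_g^{2}\bigr),
\]
which is the bulk integrand on the right-hand side of~\eq{20XI15.1}. The boundary term in~\eq{20XI15.1} is inherited verbatim from the boundary contribution in Proposition~\ref{prop:estiPS}, with the opposite overall sign due to the sign flip on the left-hand side.

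No real obstacle is expected; the only point that needs a little care is recognising the ``geometric'' combination $|dx|_g^{2}|Y|_g^{2}+2\langle\nabla x,Y\rangle_g^{2}$ as the natural quantity controlling both the leading term and the $O(x)$ error, which is forced by the structure of~\eq{7VIII14.1} (the $|Y|^{2}$ part coming from $\divr V$ and $\langle du,V\rangle_g$, and the $\langle\nabla x,Y\rangle^{2}$ part coming from the cross term $2\langle du,Y\rangle_g\langle V,Y\rangle_g$). This matches the proportions $1:2$ appearing in the statement and completes the proof.
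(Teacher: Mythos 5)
Your proof is correct and follows essentially the same route as the paper, which simply applies Proposition~\ref{prop:estiPS} with $e^{2u}=x^{2a+1}$ and $V=-\nabla x$ (so the sign you obtain by negating the identity is built in from the start); the bookkeeping of the bulk terms, the absorption of the $\nabla\nabla x(Y,Y)$ and $\Delta_g x\,|Y|^2$ contributions into the $O(x)$ coefficient via $|dx|_g^2=1+O(x)$, and the boundary term all match the intended argument.
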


\proof
This is a consequence of Proposition \ref{prop:estiPS} with $V=-\nabla x$,
and with $e^{2u}=x^{2a+1}$.
\qed

\begin{lemma}{}\label{cordSbord2}
 \ptcheck{ 20 XI}
 For any $Y$ with compact support in $M$ and
for all $a \in \R$,
\begin{eqnarray}\label{20XI15.2}
\lefteqn{\hspace{1.5cm}
 -2 \int_{U} x^{2a+1 } {S}(Y)({\nabla x}{},{\nabla x}{}) \; \langle {\nabla x}{} , Y \rangle_{g} \, \wasdmug
 }
 &&
\\
\nn&& = \int_{U} x^{2a }  \Big((2a+1)  + O(x) \Big) \; \langle {\nabla x}{} , Y \rangle^{2}_{g} \, \wasdmug   - \int_{{\partial U}} x^{2a+1} \; \langle {\nabla x}{} , Y \rangle^{2}_{g} \langle {\nabla x}{} , \eta \rangle_{g} \, \wasdmug
 \,.
 \phantom{xxxxx}
\end{eqnarray}
\end{lemma}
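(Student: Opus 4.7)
\proof
The plan is to derive the identity as a direct integration-by-parts, analogous to Lemma~\ref{Lcord11jeremie} but adapted to the boundary $\{x=0\}$ rather than $\{z=0\}$. Specifically, I would apply the divergence theorem to the vector field
$$
 W := x^{2a+1} \langle \nabla x, Y\rangle_{g}^{2} \, \nabla x
 \,,
$$
which is compactly supported in $\overline{U}$ since $Y$ is. The divergence theorem yields
$$
 \int_{\partial U} \langle W, \eta\rangle_{g} \, \wasdmug
  = \int_{U} \divr(W) \, \wasdmug
  \,.
$$
The boundary integrand is exactly the right-hand boundary term in \eqref{20XI15.2}, so the main task is to compute $\divr(W)$ and identify the $S(Y)$-contribution.

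Expanding, one has
$$
 \divr(W)
  = (2a+1)\, x^{2a}\, |\nabla x|_{g}^{2}\,\langle \nabla x, Y\rangle_{g}^{2}
   + 2x^{2a+1} \langle \nabla x, Y\rangle_{g}\,
       \langle \nabla x, \nabla \langle \nabla x, Y\rangle_{g}\rangle_{g}
   + x^{2a+1} \langle \nabla x, Y\rangle_{g}^{2}\, \Delta_{g} x
 \,.
$$
The middle factor splits, via the product rule, as
$$
 \langle \nabla x, \nabla \langle \nabla x, Y\rangle_{g}\rangle_{g}
  = \Hess(x)(\nabla x, Y) + (\nabla x)^{i}(\nabla x)^{j}\nabla_{i} Y_{j}
  = \Hess(x)(\nabla x, Y) + S(Y)(\nabla x, \nabla x)
 \,,
$$
where in the last step one uses the symmetrisation trick that $(\nabla x)^{i}(\nabla x)^{j}\nabla_{i}Y_{j} = (\nabla x)^{i}(\nabla x)^{j}\tfrac12(\nabla_{i}Y_{j}+\nabla_{j}Y_{i})$.

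The remaining step is to absorb the \emph{l.o.t.}\ into the advertised $O(x)$. In the setup of Section~\ref{ss8XI15.1}, $x$ equals the $g$-distance to $\partial M$ near $\partial M$, hence on a collar it satisfies $|\nabla x|_{g}^{2} = 1$, $\nabla_{\nabla x}\nabla x = 0$, and $\Delta_{g} x$ is smooth up to the boundary. Globally $x$ is only a smooth extension, so these quantities read $|\nabla x|_{g}^{2} = 1 + O(x)$, $\Hess(x)(\nabla x, \cdot) = O(x)\,|\cdot|_{g}$, and $\Delta_{g} x = O(1)$. Plugging into $\divr(W)$, the first term contributes $(2a+1 + O(x))\,x^{2a}\langle\nabla x, Y\rangle_{g}^{2}$, while the Hessian and Laplacian contributions are bounded pointwise by $x^{2a+1}\langle \nabla x, Y\rangle_{g}^{2}\cdot O(1) = x\cdot x^{2a}\langle \nabla x, Y\rangle_{g}^{2}\cdot O(1)$, i.e.\ they also fit inside the $O(x)$ on the right-hand side. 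Rearranging the identity so that the $S(Y)$-term is on the left-hand side yields \eqref{20XI15.2}.

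The only mild subtlety is ensuring that the $O(x)$ absorption is uniform. Since $Y$ has compact support, the relevant bounds come from the smooth tensors $\Hess(x)$, $|\nabla x|_{g}^{2} - 1$ and $\Delta_{g} x$ on $\overline{U}$, all of which are controlled on $\mathrm{supp}(Y)$; no asymptotics near $\{z=0\}$ are required here, consistently with Remark~\ref{10XI15.1}.
\qedskip
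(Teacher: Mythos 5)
Your route is the same as the paper's: its proof consists precisely of integrating the divergence of $\langle Y,\nabla x\rangle_g^2\,x^{2a+1}\nabla x$ (equivalently, specialising Proposition~\ref{prop:estiPS2.2015} to $v=x$, $e^{2u}=x^{2a}$), and your expansion of $\divr W$, the symmetrisation giving the $S(Y)(\nabla x,\nabla x)$ term, and the boundary term all match that computation.

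The one step that does not hold as written is the absorption of the Hessian cross-term. From your own estimate $\Hess x(\nabla x,\cdot)=O(x)$ you only get the pointwise bound
$2x^{2a+1}\,\big|\langle\nabla x,Y\rangle_g\big|\,\big|\Hess x(\nabla x,Y)\big|\le x^{2a}\,O(x)\,\big|\langle\nabla x,Y\rangle_g\big|\,|Y|_g$,
not $x^{2a}\,O(x)\,\langle\nabla x,Y\rangle_g^2$ as you claim: the factor $|Y|_g$ cannot be traded for $|\langle\nabla x,Y\rangle_g|$ (at a point where $Y$ is almost orthogonal to $\nabla x$ the ratio blows up). To land exactly on the form displayed in \eq{20XI15.2} one needs $\Hess x(\nabla x,\cdot)$ to be proportional to $dx$ with a bounded factor, which is what happens where $x$ is the exact distance function: there $\Hess x(\nabla x,\cdot)=\frac12\,d\big(|\nabla x|_g^2\big)\equiv 0$, so the cross-term drops out altogether. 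Away from that collar the leftover is an $x^{2a}O(x)|Y|_g^2$-type error, of the same kind that Lemmas~\ref{Sdeuxbordpropx}--\ref{cordSbord2f} record explicitly and which is harmless for the use made in Proposition~\ref{propKornbandex}; so your argument should either invoke the vanishing of $\Hess x(\nabla x,\cdot)$ on the region where $x$ is the distance, or keep the extra $O(x)\,|Y|_g\,|\langle\nabla x,Y\rangle_g|$ contribution as a separate term, rather than absorbing it pointwise into the coefficient of $\langle\nabla x,Y\rangle_g^2$.
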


 \proof
Integrate the divergence of $\langle Y, \nabla x\rangle^2x^{2a+1}\nabla x$.
Note that this is a particular case  of Proposition \ref{prop:estiPS2.2015}.
\qedskip

Let $f$ be a defining function for $\partial U$, such that
$$
|df|_{ g}^2=1, \mbox{ near $\partial U$ and }
$$
$$
\langle \nabla f,\nabla x\rangle=0 \mbox{ on $\partial U$ near } \{x=0\}
\mbox{ and on }\{x=0\}.
$$
 We thus  have
$$
|df|_{g}^2=(1+O(f))\,,\;\;\;\nabla\nabla f=O_g(1)\,,\;\;\;
\langle \nabla f,\nabla x\rangle=O(x)
 \,,
$$
with the unit normal $\eta$ to $\partial U$ equal to $-\nabla f$.
 Also recall that
$$
|dx|_{g}^2=(1+O(x))\,,\;\;\;\nabla\nabla x=O_g(1)
 \,.
$$

\begin{lemma}{}\label{Sdeuxbordpropx}
 \ptcheck{20XI15}
 We have
\begin{eqnarray}\label{20XI15.3}
 \lefteqn{\hspace{1cm}
  - 2\int_{U} x^{2a+1 } S(Y)(\nabla x ,\nabla f ) \; \langle \nabla f  , Y \rangle_{g} \, \wasdmug
-\int_{U} x^{2a+1 } {S}(Y)(\nabla f ,\nabla f ) \; \langle \nabla x  , Y \rangle_{g} \, \wasdmug
}
&&
 \\
 \nn
&  &
  \phantom{xxxxx}
  =
   \int_{U} x^{2a }  \left(\frac{2a+1}2   \right) \; \langle \nabla f  , Y \rangle^2_{g}  \, \wasdmug
+\int_{U} x^{2a }  O(x) | Y |^2_{g} \, \wasdmug
\\
\nn&&
  \phantom{xxxxx = }+ \frac12\int_{{\partial U}} x^{2a+1 } \;
 \langle \nabla f  , Y \rangle^2_{g} \, \wasdmug
-\int_{{\partial U}} x^{2a+1 } \; \langle \nabla x  , Y \rangle_{g}
 \langle\eta , Y \rangle_{g} \, \wasdmug
  \,.
  \phantom{xxxxx}
\end{eqnarray}
\end{lemma}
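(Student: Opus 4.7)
The plan is to adapt the two-step integration-by-parts argument used in the proof of Lemma \ref{LSdeuxbordprop} (which is the direct analogue of the present lemma in the vertical-stripe setting), to the current horizontal-stripe geometry where $x$ rather than $z$ plays the role of a defining function for the boundary of $M$, and the weight is $x^{2a+1}$. Recall that here $|dx|_g^2 = 1 + O(x)$, $|df|_g^2 = 1 + O(f)$, $\nabla\nabla x = O_g(1)$, $\nabla\nabla f = O_g(1)$, $\langle \nabla f, \nabla x\rangle = O(x)$, and $\eta = -\nabla f$ on $\partial U$ near $\{x=0\}$.

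First, I would integrate over $U$ the divergence of the vector field
$$W_1 := x^{2a+1}\langle \nabla x, Y\rangle_g \langle \nabla f, Y\rangle_g \, \nabla f.$$
Expanding $\text{div}(W_1)$ by the Leibniz rule and using the asymptotic data for $x$ and $f$ recalled above, the volume side produces the second term on the left-hand side of \eqref{20XI15.3}, namely $-\int_U x^{2a+1} S(Y)(\nabla f, \nabla f)\langle \nabla x, Y\rangle_g$, together with a contribution of the form $-\int_U x^{2a+1}\nabla Y(\nabla f, \nabla x)\langle \nabla f, Y\rangle_g$, error terms of order $\int_U x^{2a}\, O(x)\, |Y|_g^2$, and a boundary term that, after substituting $\eta = -\nabla f$ and using $|df|_g^2 = 1 + O(f)$, equals $-\int_{\partial U} x^{2a+1}\langle \nabla x, Y\rangle_g \langle \eta, Y\rangle_g$.

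Second, I would integrate over $U$ the divergence of
$$W_2 := \tfrac{1}{2}\, x^{2a+1}\langle \nabla f, Y\rangle_g^2 \, \nabla x.$$
Differentiating the weight $x^{2a+1}$ in the $\nabla x$ direction produces the main term $\int_U \tfrac{2a+1}{2} x^{2a}\langle \nabla f, Y\rangle_g^2$; differentiation of $\langle \nabla f, Y\rangle_g^2$ produces the cross term $\int_U x^{2a+1}\nabla Y(\nabla x, \nabla f)\langle \nabla f, Y\rangle_g$ plus lower-order $O(x)|Y|_g^2$ contributions from $\nabla\nabla f$; and the boundary yields $\tfrac{1}{2}\int_{\partial U} x^{2a+1}\langle \nabla f, Y\rangle_g^2$, again using $\eta = -\nabla f$.

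Adding the two identities, the cross terms $\nabla Y(\nabla f, \nabla x) + \nabla Y(\nabla x, \nabla f)$ combine into $2\, S(Y)(\nabla x, \nabla f)$, producing the first left-hand term of \eqref{20XI15.3}. All remaining contributions match the right-hand side of the claimed identity. The main obstacle, as in Lemma \ref{LSdeuxbordprop}, is simply the careful bookkeeping of boundary contributions and of the various $O(x)$ error terms; compared with the vertical-stripe analogue, the absence here of singular $1/z$ factors in the weights actually simplifies the algebra considerably, since no cancellations against $\nabla\nabla z = -zg + 2z(\nabla z/z)(\nabla z/z) + o_g(z)$-type corrections arise.
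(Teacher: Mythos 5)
Your proposal is correct and follows essentially the same route as the paper: the paper's proof integrates the divergences of exactly the same two vector fields, $x^{2a+1}\langle\nabla x,Y\rangle_g\langle\nabla f,Y\rangle_g\,\nabla f$ and (up to your explicit factor $\tfrac12$) $x^{2a+1}\langle\nabla f,Y\rangle_g^2\,\nabla x$, and then adds the resulting identities so that the cross terms recombine into $2S(Y)(\nabla x,\nabla f)$. The bookkeeping of the $O(x)$ errors and of the boundary contributions via $\eta=-\nabla f$ is handled exactly as in the paper.
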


\proof
We integrate, first, the divergence of $\langle \nabla f  , Y \rangle_{g}
 \langle {\nabla x} , Y \rangle_{g}x^{2a+1}\nabla f$:
\begin{eqnarray*}
 \lefteqn{ - \int_{U} x^{2a+1 } {\nabla}Y(\nabla f ,\nabla x ) \; \langle \nabla f  , Y \rangle_{g} \, \wasdmug
- \int_{U} x^{2a +1} {S}(Y)(\nabla f ,\nabla f ) \; \langle \nabla x  , Y \rangle_{g} \, \wasdmug
}
 &&
 \\
&& = \int_{U} x^{2a }    O(x)   | Y |^{2}_{g} \, \wasdmug
- \int_{{\partial U}} x^{2a+1 } \; \langle \nabla x  , Y \rangle_{g}
 \langle {\nabla f}{} , Y \rangle_{g}\langle {\nabla f}{} , \eta \rangle_{g} \, \wasdmug  \\
&& = \int_{U} x^{2a }  O(x)  | Y |^{2}_{g} \, \wasdmug
 -\int_{{\partial U}} x^{2a+1 } \; \langle \nabla x  , Y \rangle_{g}
 \langle\eta , Y \rangle_{g} \, \wasdmug  .
\end{eqnarray*}
Next, we integrate the divergence of $\langle \nabla f  , Y \rangle^2_{g}x^{2a+1}\nabla x$:
\begin{eqnarray*}
 \lefteqn{
  - \int_{U} x^{2a+1 } {\nabla}Y(\nabla x ,\nabla f ) \; \langle \nabla f  , Y \rangle_{g} \, \wasdmug
  }
  &&
\\
& =&   \int_{U} x^{2a }  \left( \frac{2a+1}2 \right) \; \langle \nabla f  , Y \rangle^2_{g} \;  \, \wasdmug
+\int_{U} x^{2a }  O(x) | Y |^2_{g} \;  \, \wasdmug
\\
&&- \frac12\int_{{\partial U}} x^{2a+1 } \;
 \langle \nabla f  , Y \rangle^2_{g}\langle \nabla f  , \eta \rangle_{g} \, \wasdmug  .
\end{eqnarray*}
We conclude by adding the equations above.
\qedskip

\begin{lemma}{}\label{Sdeuxbordpropx2}
For any $Y$ with compact support in $M$ and any $a\in\R$,
 \ptcheck{20XI}
\begin{eqnarray}\label{20XI15.4}
&&
\\
\nn
\lefteqn{
 - 2\int_{U} x^{2a+1 } S(Y)(\nabla x ,\nabla f ) \; \langle \nabla x  , Y \rangle_{g} \, \wasdmug
- \int_{U} x^{2a+1 } {S}(Y)(\nabla x ,\nabla x ) \; \langle \nabla f  , Y \rangle_{g} \, \wasdmug
}
&&
\\
\nn
& = &
 \int_{U} x^{2a }  \left({2a+1}   \right) \; \langle \nabla f  , Y \rangle\langle \nabla x  , Y \rangle \;  \, \wasdmug
+\int_{U} x^{2a }  O(x) | Y |^2_{g} \, \wasdmug
\\
\nn&&+ \frac12\int_{{\partial U}} x^{2a+1 } \;
 \langle \nabla x  , Y \rangle^2_{g} \, \wasdmug
-\int_{{\partial U}} x^{2a+1 } \; \langle \nabla x  , Y \rangle_{g}
 \langle\nabla f, Y \rangle_{g} \langle\nabla x, \eta \rangle_{g}\, \wasdmug
  \,.
   \phantom{xxx}
\end{eqnarray}
\end{lemma}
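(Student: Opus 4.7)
The plan is to follow the same divergence-integration strategy used for Lemma~\ref{Sdeuxbordpropx}, but with the roles of $\nabla x$ and $\nabla f$ interchanged in one of the two factors carrying $Y$.

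First, I would apply Stokes' theorem to the vector field
$$V_1 := x^{2a+1}\,\langle \nabla x, Y\rangle_g\,\langle \nabla f, Y\rangle_g\,\nabla x.$$
Expanding $\mathrm{div}(V_1)$ via the product rule and invoking $|dx|^2_g = 1+O(x)$, $\Delta x = O(1)$, and $\nabla\nabla x = O_g(1)$, the bulk integrand splits into $(2a+1)x^{2a}\langle \nabla x,Y\rangle_g\langle \nabla f,Y\rangle_g$, plus the two derivative terms $x^{2a+1}\nabla Y(\nabla x,\nabla x)\langle \nabla f,Y\rangle_g$ and $x^{2a+1}\langle \nabla x,Y\rangle_g\nabla Y(\nabla x,\nabla f)$, modulo an $x^{2a}O(x)|Y|^2_g$ remainder. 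The boundary contribution is exactly $\int_{\partial U} x^{2a+1}\langle \nabla x,Y\rangle_g\langle \nabla f,Y\rangle_g\langle \nabla x,\eta\rangle_g$.

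Second, I would apply Stokes' theorem to
$$V_2 := \tfrac{1}{2}\,x^{2a+1}\,\langle \nabla x, Y\rangle_g^2\,\nabla f.$$
The coefficient produced by differentiating $x^{2a+1}$ now comes multiplied by $\langle \nabla x,\nabla f\rangle_g = O(x)$ and is therefore absorbed into the $x^{2a}O(x)|Y|^2_g$ error; the remaining Hessian and Laplacian contributions are likewise $x^{2a}O(x)|Y|^2_g$ thanks to $\nabla\nabla f = O_g(1)$ and $\Delta f = o(1)$. Only one genuine derivative term survives in the bulk, namely $x^{2a+1}\langle \nabla x,Y\rangle_g\nabla Y(\nabla f,\nabla x)$. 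Using $\eta = -\nabla f$ and $|df|^2_g = 1$ on $\partial U$, the boundary term equals $-\tfrac{1}{2}\int_{\partial U} x^{2a+1}\langle \nabla x,Y\rangle_g^2$.

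Third, I would combine the two identities using the pointwise symmetrisation $\nabla Y(\nabla x,\nabla f) = 2S(Y)(\nabla x,\nabla f) - \nabla Y(\nabla f,\nabla x)$ together with the tautology $\nabla Y(\nabla x,\nabla x) = S(Y)(\nabla x,\nabla x)$ (which holds because $\nabla x\otimes\nabla x$ is symmetric). The $V_1$-identity then delivers the two $S(Y)$-integrands on the left-hand side of \eq{20XI15.4} plus a residual $\int_U x^{2a+1}\langle \nabla x,Y\rangle_g\nabla Y(\nabla f,\nabla x)$; the $V_2$-identity is precisely what is needed to trade this residual for the boundary term $\tfrac{1}{2}\int_{\partial U} x^{2a+1}\langle \nabla x,Y\rangle_g^2$ at the cost of another $x^{2a}O(x)|Y|^2_g$ contribution. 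Rearranging produces \eq{20XI15.4}.

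The only subtle point---and the main obstacle, such as it is---is ensuring that the $(2a+1)$-coefficient arising from the $V_2$-integration really does land in the $O(x)|Y|^2_g$ error; this requires the hypothesis $\langle \nabla f,\nabla x\rangle_g = O(x)$ recorded just before the lemma. All other remainders coming from $\nabla\nabla x$, $\nabla\nabla f$, $\Delta x$ and $\Delta f$ are automatically of the required order, so no further bookkeeping is needed.
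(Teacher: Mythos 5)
Your proposal is correct and coincides with the paper's own proof: the paper simply integrates the divergence of the single vector field $\langle \nabla f , Y \rangle_{g}\langle \nabla x , Y \rangle_{g}x^{2a+1}\nabla x+\frac12\langle \nabla x , Y \rangle^2_{g}x^{2a+1}\nabla f$, which is exactly your $V_1+V_2$ treated in one step, with the $(2a+1)\langle\nabla x,\nabla f\rangle_g=O(x)$ absorption and the $\eta=-\nabla f$ boundary identification handled just as you describe. (The only slips are cosmetic: the Hessian error in the $V_2$ computation comes from $\nabla\nabla x=O_g(1)$ rather than $\nabla\nabla f$, and one only has $\Delta f=O(1)$ here, not $o(1)$ --- both harmless since the extra factor of $x$ puts these terms into $x^{2a}O(x)|Y|^2_g$ anyway.)
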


\proof
We integrate the divergence of
$$\langle \nabla f  , Y \rangle_{g}
 \langle {\nabla x} , Y \rangle_{g}x^{2a+1}\nabla x
+\frac12\langle {\nabla x} , Y \rangle^2_{g}x^{2a+1}\nabla f.$$
\qed

\begin{lemma}{}\label{cordSbord2f}
 \ptcheck{20XI}
 For any $Y$ with compact support in  $M$ and
$\forall a \in \R$,
\begin{eqnarray}\label{20XI15.5}
 \lefteqn{
 -2 \int_{U} x^{2a+1 } {S}(Y)({\nabla f}{},{\nabla f}{}) \; \langle {\nabla f}{} , Y \rangle_{g} \, \wasdmug
 }
 &&
\\
\nn&& = \int_{U} x^{2a }   O(x)  \; |Y |^{2}_{g} \, \wasdmug   + \int_{{\partial U}} x^{2a+1} \; \langle {\nabla f}{} , Y \rangle^{2}_{g}  \, \wasdmug
\end{eqnarray}
\end{lemma}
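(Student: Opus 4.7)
\proof
The proof follows the same divergence-identity strategy used in the three preceding lemmas. The plan is to integrate over $U$ the divergence of the vector field
$$
 W := x^{2a+1}\,\langle \nabla f, Y\rangle_{g}^{2}\,\nabla f,
$$
and then apply Stokes' theorem, recognising that the outward unit normal satisfies $\eta = -\nabla f$ on $\partial U$ so that $\langle \nabla f,\eta\rangle_{g} = -|\nabla f|_{g}^{2} = -1$ there, which produces the boundary contribution $+\int_{\partial U} x^{2a+1}\langle \nabla f,Y\rangle_{g}^{2}$ on the right-hand side of \eqref{20XI15.5}.

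First I would expand $\operatorname{div} W$ into four pieces: the derivative hitting $x^{2a+1}$ yields $(2a+1)\,x^{2a}\langle \nabla x,\nabla f\rangle_{g}\,\langle \nabla f,Y\rangle_{g}^{2}$, which by $\langle\nabla x,\nabla f\rangle_{g}=O(x)$ is absorbed in $\int_{U} x^{2a}O(x)|Y|_{g}^{2}$; the factor $\Delta_{g}f = O(1)$ produces $x^{2a+1}\Delta_{g}f\,\langle \nabla f,Y\rangle_{g}^{2}$, also an $x^{2a}O(x)|Y|_{g}^{2}$ term; the derivative hitting $\langle \nabla f,Y\rangle_{g}^{2}$ through the $Y$-factor gives
$$
 2\,x^{2a+1}\,\langle \nabla f,Y\rangle_{g}\,(\nabla Y)(\nabla f,\nabla f),
$$
and antisymmetrising in the last two arguments shows $(\nabla Y)(\nabla f,\nabla f)=S(Y)(\nabla f,\nabla f)$, producing the distinguished term $2x^{2a+1}S(Y)(\nabla f,\nabla f)\,\langle \nabla f,Y\rangle_{g}$; finally the derivative hitting the other $\nabla f$ factor yields $x^{2a+1}\,\langle Y,\nabla(|\nabla f|_{g}^{2})\rangle_{g}\,\langle \nabla f,Y\rangle_{g}$, which by $\nabla\nabla f = O_{g}(1)$ (whence $\nabla(|\nabla f|_{g}^{2}) = 2\,(\nabla\nabla f)(\nabla f,\cdot) = O_{g}(1)$) and Cauchy--Schwarz is bounded pointwise by $C\,x^{2a+1}|Y|_{g}^{2}$ and so also contributes to $\int_{U}x^{2a}O(x)|Y|_{g}^{2}$.

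Collecting these estimates, Stokes' theorem yields
$$
 -\int_{\partial U} x^{2a+1}\langle \nabla f,Y\rangle_{g}^{2}\;\wasdmug
 = 2\int_{U} x^{2a+1}\,S(Y)(\nabla f,\nabla f)\,\langle \nabla f,Y\rangle_{g}\;\wasdmug
  + \int_{U} x^{2a}\,O(x)\,|Y|_{g}^{2}\;\wasdmug,
$$
and rearranging gives precisely \eqref{20XI15.5}. No step here is a serious obstacle: everything reduces to bookkeeping of which error terms are swept into the single $\int_{U}x^{2a}O(x)|Y|_{g}^{2}$ reservoir, exactly as in Lemmas \ref{cordSbord2}, \ref{Sdeuxbordpropx}, \ref{Sdeuxbordpropx2}. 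The only mild subtlety is controlling the term arising from $\nabla(|\nabla f|_{g}^{2})$, which is handled by the regularity hypothesis $\nabla\nabla f = O_{g}(1)$ rather than the pointwise bound $|\nabla f|_{g}^{2} = 1+O(f)$ alone.
\qed
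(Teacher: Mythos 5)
Your proof is correct and is exactly the paper's argument: the paper's proof consists of the single instruction to integrate the divergence of $\langle Y,\nabla f\rangle_g^2\,x^{2a+1}\nabla f$, which is precisely the vector field $W$ you use, with the same boundary identification $\eta=-\nabla f$ and the same absorption of the $\Delta_g f$, $\langle\nabla x,\nabla f\rangle=O(x)$ and $\nabla\nabla f=O_g(1)$ contributions into the $\int_U x^{2a}O(x)|Y|_g^2$ term. Only a wording slip: the identity $(\nabla Y)(\nabla f,\nabla f)=S(Y)(\nabla f,\nabla f)$ comes from \emph{symmetrising} (the antisymmetric part of $\nabla Y$ vanishes against $\nabla f\otimes\nabla f$), not antisymmetrising.
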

 \proof
Integrate the divergence of $\langle Y, \nabla f\rangle^2x^{2a+1}\nabla f$.
\qed

\begin{prop}\label{propKornbandex}
For any $a>  -1/2$ 
there exist   $x_0\in(0,x_1)$
and a positive constant $C$ such that for all $Y$ compactly supported in $M$,
\bel{16VI16.3}
\int_{\partial U\cap \{x>x_0\}}|Y|^2+\int_{ U\cap \{x>x_0\}}|Y|^2
+\int_U|S(Y)|^2x^{2a+2}\geq C \int_U|Y|^2x^{2a}
 \,.
\ee
 \ptcheck{20XI}
\end{prop}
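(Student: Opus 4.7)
The plan is to implement the strategy of Remark~\ref{R26X15.2}, mirroring that of Proposition~\ref{propKornbande}. The starting point is Lemma~\ref{cordSbord1} (\eq{20XI15.1}), which for $a > -1/2$ yields on its right-hand side a bulk integrand
$\big(\tfrac{2a+1}{4}+O(x)\big)\big(|dx|^2|Y|^2 + 2\langle \nabla x, Y\rangle^2\big)$
with strictly positive leading coefficient. Using $|dx|^2 = 1 + O(x)$, this is bounded below by $c|Y|^2 x^{2a}$ for some $c > 0$ in a sufficiently small neighborhood of $\{x=0\}$, the complementary region being absorbed into $\int_{U \cap \{x > x_0\}}|Y|^2$. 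A Cauchy--Schwarz bound with a large parameter $\eta$ on the left-hand side $-\int_U x^{2a+1}(S(Y)+\tfrac12\tr_g S(Y)\,g)(Y, \nabla x)$ produces $\tfrac{\eta}{2}\int_U x^{2a+2}|S(Y)|^2 + O(\eta^{-1})\int_U x^{2a}|Y|^2$, where the latter coefficient is made smaller than $c/4$ by enlarging $\eta$.

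Next I would handle the boundary contributions in \eq{20XI15.1}. The term $\int_{\partial U} x^{2a+1}|Y|^2\langle \nabla x, \eta\rangle$ vanishes near $\{x=0\}$ by the orthogonality condition $\langle \nabla x,\eta\rangle = 0$ on $\partial U$, so it is bounded by $C\int_{\partial U \cap \{x > x_0\}}|Y|^2$. The obstacle is the cross boundary term $\int_{\partial U} x^{2a+1}\langle Y, \nabla x\rangle\langle Y, \eta\rangle$. In the vertical-stripe proof of Proposition~\ref{propKornbande}, the analogous term was killed exactly by adding \eq{26X15.1} and \eq{24X15.1}, positivity of the combined bulk being guaranteed by the arithmetic identity $(n+1)/2 - b + b - (n-3)/2 = 2$. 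The analogous combination $2\cdot\eq{20XI15.1} - \eq{20XI15.3}$ fails in the present horizontal setting because the matching coefficients $(2a+1)/2$ for $|Y|^2$ and $\langle \nabla f, Y\rangle^2$ cancel at leading order, leaving a bulk integrand that degenerates along $Y \parallel \nabla f$. This non-cancellation is the principal difficulty.

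I would therefore absorb rather than cancel. Writing $\eta = -\nabla f$ on $\partial U$ near $\{x=0\}$, Cauchy--Schwarz with a parameter $\delta > 0$ gives
\[
\Big|\int_{\partial U \cap \{x < x_0\}} x^{2a+1}\langle Y, \nabla x\rangle \langle Y, \nabla f\rangle\Big| \leq \tfrac{\delta}{2} \int_{\partial U} x^{2a+1} \langle Y, \nabla x\rangle^2 + \tfrac{1}{2\delta}\int_{\partial U} x^{2a+1}\langle Y, \nabla f\rangle^2.
\]
The $\langle Y, \nabla f\rangle^2$ boundary integral is converted to a bulk expression via \eq{20XI15.5} (Lemma~\ref{cordSbord2f}), and the $\langle Y, \nabla x\rangle^2$ one via \eq{20XI15.4} (Lemma~\ref{Sdeuxbordpropx2}); the residual boundary pieces from the latter carry the factor $\langle \nabla x, \eta\rangle$, which vanishes near $\{x=0\}$ and thus again sits on $\partial U \cap \{x > x_0\}$.

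A second Cauchy--Schwarz with a large parameter $\tilde\eta$ reduces the resulting bulk $\int x^{2a+1} S(Y)\cdot Y$ contributions to $\tilde\eta \int_U x^{2a+2}|S(Y)|^2 + O(\tilde\eta^{-1})\int_U x^{2a}|Y|^2$. Choosing $\eta$ and $\tilde\eta$ large and $\delta$ suitably, the cumulative coefficient of $\int_U x^{2a}|Y|^2$ on the right side is made smaller than $c/2$ and can be absorbed into the left. This yields \eq{16VI16.3} with a constant depending on $a$, $x_0$, and the geometry near $\{x=0\}$.
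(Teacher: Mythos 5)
Your argument is correct, but it follows a genuinely different route from the paper's. The paper proves \eq{16VI16.3} by forming one fixed linear combination, namely $2\times$\eq{20XI15.1}$+4\times$\eq{20XI15.2}$+$\eq{20XI15.3}$+2\times$\eq{20XI15.4}$+\frac12\times$\eq{20XI15.5}, chosen so that the troublesome boundary contributions near $\{x=0\}$ assemble into the manifestly non-negative square $\int_{\partial U}x^{2a+1}\big(Y(f)+Y(x)\big)^2$ and the bulk assembles into $\big(\frac{2a+1}{2}+o(1)\big)x^{2a}\big(|Y|^2+6Y(x)^2+(2Y(x)+Y(f))^2\big)$, after which a single Cauchy--Schwarz against $S(Y)$ (Remark~\ref{R26X15.2}) finishes the proof. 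You instead take \eq{20XI15.1} alone as the source of coercivity and \emph{absorb} rather than cancel the cross boundary term, using a $\delta$-weighted Cauchy--Schwarz and converting the resulting boundary integrals of $\langle Y,\nabla x\rangle^2$ and $\langle Y,\nabla f\rangle^2$ into bulk quantities via \eq{20XI15.4} and \eq{20XI15.5}; Lemmas~\ref{cordSbord2} and~\ref{Sdeuxbordpropx} are not needed at all. This is a legitimate alternative and yields the same range $a>-1/2$; its price is quantifier bookkeeping that your sketch compresses: the conversion \eq{20XI15.4} also produces the bulk term $(2a+1)\int_U x^{2a}\langle\nabla f,Y\rangle\langle\nabla x,Y\rangle$ and $O(x)|Y|^2$ errors, which are harmless only because they enter multiplied by $\delta$, while the $O(x)|Y|^2$ error and the $S(Y)\cdot Y$ term coming from \eq{20XI15.5} are amplified by $1/(2\delta)$, so $x_0$ and $\tilde\eta$ must be chosen \emph{after} $\delta$ (and $\delta$ after the leading constant $c$ is fixed). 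With that ordering made explicit, your absorption argument closes; the paper's combination buys a cleaner, parameter-free positivity statement (a complete square on the boundary and a positive quadratic form in the bulk), whereas your version is more flexible in that it does not require finding the matching coefficients.
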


\proof
Consider the linear combination
\bel{16VI16.2}
 \mbox{2$\times $\eq{20XI15.1} + 4$\times $\eq{20XI15.2}+\eq{20XI15.3}+2$\times$\eq{20XI15.4}+$\frac12\times $\eq{20XI15.5}
}
\ee
of the equations above. One obtains a volume integral involving $S(Y)$ plus
%
\bean
 \int _U \Big(\frac{2a+1}{2}+o(1)\Big)x^{2a} \Big(|Y|^2+6Y(x)^2+(2Y(x)+Y(f))^2\Big)
\\
   + \int_{{\partial U}} x^{2a+1} (Y(f)+Y(x))^2 + \int_{{\partial U\cap \{x>x_0\}}} O(|Y|^2)
    \,,
\eeal{16VI16.2+}
%
%
with some constant $c$.
For $a>-1/2$ the result follows as explained in Remark~\ref{26X15.2}, since all terms in \eq{16VI16.2+} except the last one are positive.
%
%
\qed

\subsection{The global inequality}
 \label{ss13X15.5}

In what follows we work in the framework of Section~\ref{ss18IX15.2}, in particular the set $\Omega$ is as defined at the beginning of that section. The inequalities proved so far lead to:

\begin{Theorem}
  \label{TC22V15}
 Let $\phi= x/\rho$, $\psi=x^{a-1} z^b \rho^{c+1}$, and suppose that $F \subset \mathring H^1_{\phi,\psi}(\Omega)$ is a
   closed subspace of $ \mathring H^1_{\phi,\psi}(\Omega)$ transverse to the kernel of $S$. Then
 for all  $b\neq (n+1)/2, (n-1)/2$, $c>-|n-1-2b|$ and for all constants $a$ sufficiently large there exists a  constant $C_1(a,b,c, F)>0$ such that the inequality
\bel{23V15.2}
	\|\phi S(Y)\|_{L^2_\psi}\geq C_1(a,b,c,F) \|Y\|_{  H^1_{\phi,\psi}}
\ee
holds for all $Y\in F$.
\end{Theorem}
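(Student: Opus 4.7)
The plan is to establish \eq{23V15.2} by a patching argument that combines the three local weighted Korn inequalities proved in Sections~\ref{26VI16.1}, \ref{KornbandeV} and \ref{KornbandeH}, together with a classical interior Korn inequality on compact subsets, and then to eliminate a lower-order compact term using the hypothesis that $F$ is closed in $\mathring H^1_{\phi,\psi}(\Omega)$ and transverse to $\ker S$. This mirrors the structure the authors already announced in the proof of Theorem~\ref{T21XI15.1}, where the roles of $S$ and of the local Poincar\'e inequalities are respectively taken by $\nabla$ and Proposition~\ref{P21VI14.1n} together with its boundary-stripe companions; here we implement the analogous scheme for the Killing operator.

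First I would fix a partition of unity $\{\chi_{\rm cor},\chi_{\rm vert},\chi_{\rm hor},\chi_{\rm int}\}$ subordinate to a cover of $\Omega$ by: a corner neighbourhood $\{0<x<\hat x\}\cap\{0<z<\hat z\}$ on which Proposition~\ref{prop:estiS2ah2015} is applicable; a vertical stripe near the conformal boundary $\{z=0\}$ but staying away from $\{x=0\}$, where Proposition~\ref{propKornbande} applies; a horizontal stripe near $\overline{\partial\Omega\cap M}$ but staying away from $\{z=0\}$, where Proposition~\ref{propKornbandex} applies; and a relatively compact interior piece $K_{\rm int}\Subset\Omega$ bounded away from all boundaries where both weights are uniformly bounded above and below, so that a standard Korn inequality yields $\|\nabla(\chi_{\rm int}Y)\|_{L^2}\le C(\|S(\chi_{\rm int}Y)\|_{L^2}+\|\chi_{\rm int}Y\|_{L^2})$. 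The parameters $\hat x$, $\hat z$ and the widths of the two stripes are chosen consistently with those required by the three local propositions.

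Next I would apply each local inequality to $\chi_\bullet Y$, using the commutator identity $S(\chi_\bullet Y)=\chi_\bullet S(Y)+\tfrac12(d\chi_\bullet\otimes Y+Y\otimes d\chi_\bullet)$. The commutator is supported in the overlaps of the cutoffs, each of which is relatively compact in $\Omega$ and uniformly separated from the singular boundaries, so it contributes only $L^2$-norms of $Y$ on a compact set $K\subset\Omega$; the boundary terms appearing in Propositions~\ref{propKornbande} and \ref{propKornbandex} are likewise of the form $\int_{\partial U\cap\{z>z_0\}}|Y|^2$ or $\int_{\partial U\cap\{x>x_0\}}|Y|^2$ and can be handled either by enlarging $K$ to include a neighbourhood of those pieces of $\partial U$, or by a trace estimate as was done after \eq{inegapoincareglobalmodK-}. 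Summing the four weighted estimates and exploiting the weight comparisons $\phi\asymp 1$, $\psi\asymp 1$ on $K$, one arrives at the intermediate inequality
\[
\|\phi S(Y)\|_{L^2_\psi}+\|Y\|_{L^2(K)}\;\ge\;C\,\|Y\|_{H^1_{\phi,\psi}}
\qquad\text{for all }Y\in \mathring H^1_{\phi,\psi}(\Omega),
\]
where the hypotheses $b\ne (n\pm1)/2$, $c>-|n-1-2b|$ and $a$ sufficiently large are precisely those demanded by Propositions~\ref{prop:estiS2ah2015}, \ref{propKornbande} and \ref{propKornbandex}.

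Finally I would remove the compact correction term by a Rellich-type argument exploiting the transversality assumption on $F$. Suppose \eq{23V15.2} failed on $F$: then there would exist a sequence $Y_n\in F$ with $\|Y_n\|_{H^1_{\phi,\psi}}=1$ and $\|\phi S(Y_n)\|_{L^2_\psi}\to 0$. The intermediate inequality forces $\liminf\|Y_n\|_{L^2(K)}>0$. Since the weights are uniformly equivalent to $1$ on $K$, weighted Rellich compactness provides a subsequence converging in $L^2(K)$ and weakly in $\mathring H^1_{\phi,\psi}(\Omega)$ to some $Y_\infty$. Lower semicontinuity of the weighted norm then yields $\phi S(Y_\infty)=0$, hence $Y_\infty\in\ker S$; closedness of $F$ gives $Y_\infty\in F$; transversality of $F$ to $\ker S$ forces $Y_\infty=0$, contradicting $\|Y_\infty\|_{L^2(K)}>0$. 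The hard part, and the reason the hypotheses on the weights appear, is ensuring that elements of $\ker S$ lying in $\mathring H^1_{\phi,\psi}$ form a closed subspace matching the finite-dimensional KID space considered in Appendix~\ref{s2X15.1}, so that the notion of transversality used here is compatible with the nonexistence results proved there; this is where the bound $b\le(n+1)/2$ enters, exactly as in \eq{13VI14.1h}.
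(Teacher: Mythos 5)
Your overall scheme (cutoff decomposition, corner and stripe Korn estimates, then a compactness/contradiction argument using transversality to $\ker S$) is the same as the paper's, but the step on which everything hinges is asserted rather than proved, and as stated it is false: the overlaps of your cutoffs are \emph{not} relatively compact in $\Omega$, nor separated from the degenerate boundaries. Any cover of a neighbourhood of the conformal boundary by a corner piece (where Proposition~\ref{prop:estiS2ah2015} applies) and a vertical stripe piece must hand over from one to the other along $\{z=0\}$; the set where $\nabla\chi_{\rm cor}$ and $\nabla\chi_{\rm vert}$ are non-zero is a slab $\{x\approx \hat x,\ 0<z<\cdot\}$ whose closure meets $\{z=0\}$, and similarly the corner/horizontal transition reaches $\{x=0\}$. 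On these non-compact slabs the weights $\phi,\psi$ are not comparable to constants, so the commutator terms $\tfrac12(d\chi\otimes Y+Y\otimes d\chi)$ produce \emph{weighted} $L^2$ integrals of $Y$ over non-compact regions, which cannot be absorbed into $\|Y\|_{L^2(K)}$ with $K$ compact. Controlling precisely these regions is the actual content of the paper's proof: there the cutoffs are taken as functions of $x$ and of $z$ separately, the stripe pieces $\chi_1\psi_2Y$ and $\psi_1\chi_2Y$ are estimated with the one-weight Korn inequalities of \cite[Propositions~5.1 and 6.1]{ChDelay}, and the non-compact overlap sets $\{\chi_1\nabla\chi_2\neq0\}\cup\{\chi_2\nabla\chi_1\neq0\}$ are handled by Propositions~\ref{propKornbande} and \ref{propKornbandex} in \eq{26X15.11} --- that is, the stripe estimates exist in this paper precisely to absorb the overlap terms your argument dismisses (the residual hypersurface term is then removed by the trace/compact-embedding argument, as you anticipate). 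A repaired version of your patching might be possible by exploiting smallness of $|d\chi|_g$ near $\{z=0\}$ and of $x$ near $\{x=0\}$, but that requires careful weight bookkeeping you have not supplied.

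There is a second gap: your ``intermediate inequality'' has $\|Y\|_{H^1_{\phi,\psi}}$ on the right, yet Propositions~\ref{prop:estiS2ah2015}, \ref{propKornbande} and \ref{propKornbandex} only bound weighted $L^2$ norms of $Y$, and your interior Korn inequality controls $\nabla Y$ only on a compact interior piece; nothing in your argument controls $\|\phi\nabla Y\|_{L^2_\psi}$ near the boundaries. The paper supplies the missing step by applying $2\int_\Omega|S(V)|^2=\int_\Omega|\nabla V|^2+(\mbox{\rm div}\,V)^2-\Ric(V,V)$ to $V=\phi\psi Y$ and combining with \eq{22V15.25b} to obtain \eq{estiSH1}; you need this (or an equivalent) before running the contradiction argument. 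Your concluding compactness argument is essentially correct, but the closing worry is misplaced: transversality of the closed subspace $F$ to $\ker S$ (Killing fields of $g$) is a hypothesis of the theorem, and nothing here requires identifying $\ker S$ with the KID space of Appendix~\ref{s2X15.1} or invoking $b\le(n+1)/2$; those conditions enter only in the applications to the gluing theorems.
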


\begin{remark}{\rm
  \label{R23V15.1}
We can take  $F = \mathring H^1_{\phi,\psi}(\Omega)$ when $(\Omega,g)$ has no non-trivial Killing vector fields in $\mathring H^1_{\phi,\psi}(\Omega)$
\myqed
}\end{remark}

{\noindent\sc Proof of Theorem~\ref{TC22V15}:}
Let $\chi_1:\R\to \R^+$ be a smooth function such that $\chi_1(x)=1$ for $0\le x\le x(a,b,c)/2$, and $\chi_1(x)=0$ for $ x\ge  x(a,b,c) $, set $\psi_1 = 1 - \chi_1$. Similarly let $\chi_2:\R\to \R^+$ be a smooth function such that $\chi_2(z)=1$ for $0\le z\le z(a,b,c)/2$, and $\chi_2(z)=0$ for $  z\ge  z(a,b,c) $, set $\psi_2 = 1 - \chi_2$.
We have
$$
 Y =  (\chi_1  +\psi_1 ) (\chi_2    +\psi_2 ) Y =
   \chi_1   \chi_2 Y +\chi_1   \psi_2 Y + \psi_1  \chi_2 Y+ \psi_1  \psi_2 Y
    \,.
$$
Since $\chi_1 \chi_2 Y $ has support in the set $\{0<x<x(a,b,c)\,,\, 0<z<z(a,b,c)\} $, Proposition~\ref{prop:estiS2ah2015} applies and gives
\beal{22V15.22}
 \lefteqn{\hspace{1cm}
\intOmega
x^{2a-2}z^{2b}\rho^{2c+2}|\chi_1 \chi_2 Y|^2\;\wasdmug
 }
 &&
\\
 \nonumber
  &&
 \leq
   C(a,b,c)^{-1}
    \intOmega  x^{2a}z^{2b}\rho^{2c}|S(\chi_1 \chi_2 Y)|^2\; \wasdmug
\\
 \nonumber
  &&
 =
   C(a,b,c)^{-1}
    \intOmega  x^{2a}z^{2b}\rho^{2c}
     |\frac 12 \nabla (\chi_1 \chi_2) \otimes Y
     + \frac 12 Y\otimes \nabla (\chi_1 \chi_2)
    + \chi_1 \chi_2 S( Y)|^2\; \wasdmug
\\
 \nn &&
 \leq
   C'  \left(
    \intOmega  x^{2a}z^{2b}\rho^{2c}|S( Y)|^2\; \wasdmug
    +  \int_{\{\nabla (\chi_1 \chi_2) \ne 0\}}
		x^{2a}z^{2b}\rho^{2c}|Y|^2\; \wasdmug
    \right)
 \,.
\eea

Since  $x<x(a,b,c) $  on $\{\chi_1>0\}$ and  $z(a,b,c)/2<z \le C_z$ on $\{\psi_2>0\}$ for some constant $C_z$,
using~\cite[Proposition~5.1]{ChDelay} in the second inequality below one obtains
\beal{22V15.24}
 \lefteqn{
\intOmega
x^{2a-2}z^{2b}\rho^{2c+2}|\chi_1 \psi_2 Y|^2\;\wasdmug  \le
C \intOmega
x^{2a-2} |\chi_1 \psi_2 Y|^2\;\wasdmug
 }
 &&
\\
 \nonumber
 &&
 \leq
   C''
    \intOmega  x^{2a} |S(\chi_1 \psi_2 Y)|^2\; \wasdmug
\\
 \nonumber
  &&
 \leq
   C'''  \left(
    \int _{\{\chi_1\psi_2\ne 0\}} x^{2a} |S( Y)|^2\; \wasdmug
    +  \int_{\{\nabla (\chi_1 \psi_2) \ne 0\}}  |Y|^2\; \wasdmug
    \right)
\\
 \nn &&
 \leq
   C''''  \left(
    \intOmega  x^{2a}z^{2b}\rho^{2c}|S( Y)|^2\; \wasdmug
    +  \int_{\{\nabla (\chi_1 \psi_2) \ne 0\}}  |Y|^2\; \wasdmug
    \right)
 \,.
\eea
Since $ z< z(a,b,c) $ on $\{\chi_2>0\}$  and $x(a,b,c)/2<x\le C_x$
on $\{\psi_1>0\}$ for some constant $C_x $, using~\cite[Proposition~6.1]{ChDelay}
in the second inequality below one obtains, with some constants possibly different from the ones of the previous calculation,
\beal{22V15.25-}
 \lefteqn{
\intOmega
x^{2a-2}z^{2b}\rho^{2c+2}|\chi_2 \psi_1 Y|^2\;\wasdmug  \le
C \intOmega
z^{2b} |\chi_2 \psi_1 Y|^2\;\wasdmug
 }
 &&
\\
 \nonumber
 &&
 \leq
   C''
    \intOmega  z^{2b} |S(\chi_2 \psi_1 Y)|^2\; \wasdmug
\\
\nonumber
  &&
 \leq
   C'''  \left(
    \int _{\{\chi_2\psi_1\ne 0\}} z^{2b} |S( Y)|^2\; \wasdmug
    +  \int_{\{\nabla (\chi_2 \psi_1) \ne 0\}}  |Y|^2\; \wasdmug
    \right)
\\
  \nn&&
 \leq
   C''''  \left(
    \intOmega  x^{2a}z^{2b}\rho^{2c}|S( Y)|^2\; \wasdmug
    +  \int_{\{\nabla (\chi_2 \psi_1) \ne 0\}}  |Y|^2\; \wasdmug
    \right)
 \,.
\eea
Adding, we conclude that there exists a constant $C$ such that
\beal{22V15.25}
 \lefteqn{
\intOmega
x^{2a-2}z^{2b}\rho^{2c+2}| Y|^2\;\wasdmug \leq
   C \left(
    \intOmega  x^{2a}z^{2b}\rho^{2c}|S( Y)|^2\; \wasdmug
     \right.
 }
 &&
%
%
\\
 \nn &&
 \left.
    +  \int_{K_1}  |Y|^2+\int_{\{ \chi_1 \nabla\chi_2 \ne 0\}\cup
		\{ \chi_2 \nabla\chi_1 \ne 0\}}
		x^{2a}z^{2b}\rho^{2c}|Y|^2\; \wasdmug
    \right)
 \,,
\eea
where $K_1$ is the compact set defined as the closure of
\bel{22V15.26}
  \{ \psi_2 \nabla  \chi_1  \ne 0\}\cup
	\{\psi_1 \nabla  \chi_2  \ne 0\}\cup \{   \psi_1 \psi_2 \ne 0\}
 \,.
\ee

Now, the non-compact set where $\{ \chi_1 \nabla\chi_2 \ne 0\}$ or
$\{ \chi_2 \nabla\chi_1 \ne 0\}$ is a subset
of the union of a vertical type domain (see Section \ref{KornbandeV}) where
$x$ and $\rho$ are bounded above and below by two positive constants, and a  horizontal one (see Section~\ref{KornbandeH}) where
$z$ and $\rho$ are bounded above and below by two positive constants.
We can then use Propositions \ref{propKornbande} and \ref{propKornbandex}
to control the weighted $L^2$-norm of $Y$ on this set in terms of a weighted norm of $S(Y)$.
Hence, there exists a compact  set $K_2$, a relatively compact hypersurface $\Sigma$, and a constant $C_2$ such that
\bea\label{26X15.11}
 \lefteqn{
\int_{\{ \chi_1 \nabla\chi_2 \ne 0\}\cup
		\{ \chi_2 \nabla\chi_1 \ne 0\}\}}
		x^{2a}z^{2b}\rho^{2c}|Y|^2
}
&&
 \\
		\nn&&
    \leq C_2\left(
		 \int_\Omega x^{2a}z^{2b}\rho^{2c}|S( Y)|^2\;
    +  \int_{K_2}  |Y|^2+  \int_{\Sigma}  |Y|^2\right)
     \,.
\eea
Adding \eq{22V15.25} and \eq{26X15.11},  keeping in mind that $x^2/\rho^2\leq 1$,
we find that there exists a constant $C$ such that
\bel{22V15.25b}
\intOmega
x^{2a-2}z^{2b}\rho^{2c+2}| Y|^2\;\wasdmug
 \leq
   C \left(
    \intOmega  x^{2a}z^{2b}\rho^{2c}|S( Y)|^2\; \wasdmug
    +  \int_{K}  |Y|^2+  \int_{\Sigma}  |Y|^2\; \wasdmug
    \right)
 \,,
\ee
where $K=K_1\cup K_2$.

As such, for any smooth vector fields $V$ with compact
support in $\Omega$ it holds that
$$
2\int_\Omega|S(V)|^2=\int_\Omega|\nabla V|^2+(\mbox{div} \;V)^2-\Ric(V,V)\geq
\int_\Omega|\nabla V|^2-C_3|V|^2
 \,.
$$
Choosing $V=\phi\psi Y=x^{a}z^{b}\rho^{c} Y$, and using
(\ref{22V15.25b}), one deduces that
\bel{estiSH1}
    \| Y\|^2_{H^1_{\phi,\psi}(\Omega)}\;
    \leq \;
        C
        \left(
        \int_\Omega  x^{2a}z^{2b}\rho^{2c}|S( Y)|^2\; \wasdmug
        +  \int_{K}  |Y|^2+  \int_{\Sigma}  |Y|^2\; \wasdmug
    \right)
 \,.
\ee
The usual contradiction argument, which invokes compactness of the embeddings $H^1(K)\subset L^2(K)$ and $H^1(K)\subset H^{1/2}(K)\subset  L^2(\Sigma)$
(
compare~\cite[Proposition~3.1 and Remark~3.2]{ChDelay}) concludes the proof.
\qedskip

We continue with an equivalent of~\cite[Proposition~D.13]{ChDelay}.

\begin{prop}\label{prop:estiN2015}
For all  $b\neq (n+1)/2, (n-1)/2, (n-3)/2$, $c>-|n-1-2b|$, and for all $a$ sufficiently large there exist constants
$C(a,b,c)>0$,
$x(a,b,c)>0$ and $z(a,b,c)>0$ such that for all differentiable function $N$ with
compact support in $\{0<x<x(a,b,c)\}\cap \{0<z<z(a,b,c)\}$ we have
\beal{23V15.1}
 \lefteqn{
 \intOmega  x^{2a}z^{2b}\rho^{2c}|\nabla\nabla N-\Delta N g-N \Ric (g)|^2\;\wasdmug
 }
 &&
\\
 \nn&&
   \geq
 C(a,b,c)\intOmega  x^{2a-2}z^{2b}\rho^{2c+2}( {x^{-2}}{\rho^2}|N|^2+|\nabla N|^2)\;\wasdmug \,.
\eea
\end{prop}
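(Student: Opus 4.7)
The plan is to adapt the integration-by-parts strategy of Section~\ref{s7VII14.1} to the second-order operator
$$ L_g N := \nabla\nabla N - \Delta N\, g - N\,\Ric(g), $$
following the scheme of \cite[Proposition~D.13]{ChDelay} but now in the triply-weighted setup. Since $N$ is compactly supported in $\{0<x<x(a,b,c),\, 0<z<z(a,b,c)\}$, boundary terms will not appear, so the argument reduces to producing a pointwise algebraic lower bound on the integrand after integration by parts.

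First, I would derive identities of the form
$$ \int x^{2a}z^{2b}\rho^{2c} \langle L_g N, V\otimes W\rangle_g = \int x^{2a}z^{2b}\rho^{2c}\, Q(N,\nabla N), $$
for test tensors $V\otimes W$ built from $\nabla x/x$, $\nabla z/z$, and $\nabla\rho/\rho$, by integrating by parts twice and transferring derivatives onto the weight $x^{2a}z^{2b}\rho^{2c}$ and the $V,W$ factors. Using the Hessian formulas \eqref{hesslnx}--\eqref{hesslnrho} and $\Ric(g) = -(n-1)g + O(z)$, the resulting quadratic form $Q$ will decompose, to leading order, as a combination of $|N|^2$, $|\nabla N|^2$, $\langle\nabla x/x,\nabla N\rangle^2$, $\langle \nabla z/z, \nabla N\rangle^2$, and cross terms, with coefficients that are polynomials in the parameters $(a,b,c)$ and $(A,B,C)$ of $V,W$, plus lower-order terms carrying factors of $x$ or $z$.

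Second, I would choose the $(A,B,C)$-parameters (much as $B=2b-n-1$, $A>0$, $C=0$ were chosen in the proof of Proposition~\ref{prop:estiS2ah2015}, and analogously for a second auxiliary test tensor) so that the dominant coefficients produce a positive-definite form
$$ Q(N,\nabla N) \;\geq\; c_1 \frac{\rho^2}{x^2}|N|^2 + c_2 |\nabla N|^2 + (\text{cross terms absorbable by Cauchy--Schwarz}) $$
with $c_1,c_2>0$. The three excluded values $b = (n+1)/2,\, (n-1)/2,\, (n-3)/2$ correspond precisely to vanishing of the three leading coefficients $\hat a$, $(\frac{n+1}{2}-b)$, and $(b-\frac{n-3}{2})$ that arise from the three natural integrations by parts (the first from $\Delta$-type terms, the second and third from the Killing-form identities \eqref{26X15.1} and \eqref{24X15.1} applied with $N^2\nabla z$-type currents). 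The condition $c>-|n-1-2b|$ ensures, as in Corollary~\ref{c20V15.1}, that the $\langle\nabla z/z,\nabla N\rangle^2$-coefficient has the correct sign after adding an auxiliary identity.

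Third, the cross-term $|N||L_gN|\phi$ on the left-hand side of the resulting integrated identity is absorbed via
$$ |N||L_gN|\phi \;\leq\; \tfrac{1}{2\gamma}|L_gN|^2\phi^2 + \tfrac{\gamma}{2}|N|^2 $$
with $\gamma$ small (cf.\ Remark~\ref{R26X15.2}), and similarly for any $|\nabla N||L_gN|$ cross-term. Choosing $a$ large guarantees that the $\hat b \,z^2/x^2$ contribution dominates, and restricting to small $\hat x,\hat z$ makes the $O(x)+O(z)$ error terms negligible relative to $\hat a$, $\hat b$, $\hat c\, x^2/\rho^2$. Finally, the $|\nabla N|^2$ part of the right-hand side is obtained either by a second choice of test tensor adapted to produce $|\nabla N|^2$ directly in $Q$, or by combining the $|N|^2$-bound just derived with Proposition~\ref{P21VI14.1n} applied to $\nabla N$ (after commuting $\nabla$ with $L_g$ and controlling the curvature commutator as $o(1)$ relative to the leading coefficients).

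The main obstacle will be the bookkeeping of the many leading-order coefficients in the decomposition of $Q$: one must verify that the three excluded values of $b$ are indeed the only thresholds where the coefficient matrix of the quadratic form in $(|N|, |\nabla N|, \langle\nabla x/x,\nabla N\rangle, \langle\nabla z/z,\nabla N\rangle)$ degenerates, and that no additional constraint on $c$ beyond $c>-|n-1-2b|$ is hidden in the cross-terms $\nabla x\otimes\nabla z$ and $\nabla z\otimes\nabla\rho$ arising from the Hessians of $\ln x$, $\ln z$ and $\ln\rho$.
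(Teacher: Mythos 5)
Your plan is a program rather than a proof, and it misses the one idea that actually carries the paper's argument. The paper does not integrate by parts against the second-order operator at all: it reduces the estimate to the already-established first-order results by introducing the vector field $Y=z^{-1}\nabla N-N\nabla(z^{-1})=z^{-2}\nabla(zN)$. A short computation using $\nabla\nabla(z^{-1})=z^{-1}g+O_g(1)$ gives
\begin{equation*}
S(Y)-\mathrm{div}\,Y\, g \;=\; z^{-1}\big[\nabla\nabla N-\Delta N\,g-N\big(\Ric(g)+O_g(z)\big)\big],
\end{equation*}
so that, after the algebraic inequality $|S(Y)-\tr S(Y)g|^2\ge |S(Y)|^2$, the weighted Korn inequality of Proposition~\ref{prop:estiS2ah2015} (applied with $b$ shifted to $b+1$, which is where the exclusions $b\neq (n-1)/2,(n-3)/2$ come from) bounds the left-hand side of \eq{23V15.1} from below by $C\int x^{2a-2}z^{2b-2}\rho^{2c+2}|\nabla(zN)|^2$. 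The gradient and zero-order terms on the right-hand side of \eq{23V15.1} are then extracted from $|\nabla(zN)|^2$ by the triangle inequality combined with the weighted Poincar\'e inequality of Proposition~\ref{P21VI14.1n} applied to the \emph{function} $zN$ (with weight exponent $b-1$, whence the remaining exclusion $b\neq(n+1)/2$), choosing the splitting parameter small enough to absorb the $N\nabla z$ term and the $O(z^2)N^2$ error. Nothing in your outline plays the role of this reduction, and the decisive steps you list — verifying that the quadratic form obtained from your second-order integrations by parts is positive definite, that the thresholds in $b$ are exactly the three stated values, and that $c>-|n-1-2b|$ suffices — are precisely the parts you defer; as written they are not established.

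Two specific points in your sketch would also fail or mislead as stated. First, your identification of the excluded values of $b$ with the coefficients $\hat a$, $\frac{n+1}{2}-b$ and $b-\frac{n-3}{2}$ borrowed from the stripe lemmas is not correct in this corner setting; in the paper the three values arise purely from the weight shifts $b\mapsto b+1$ (in the Korn inequality for $Y$) and $b\mapsto b-1$ (in the Poincar\'e inequality for $zN$), so any direct second-order argument would have to reproduce these thresholds by an independent computation that you have not carried out. Second, obtaining the $|\nabla N|^2$ term by ``commuting $\nabla$ with $L_g$ and controlling the curvature commutator'' applied to $\nabla N$ is not viable here: $\nabla N$ is a covector, Proposition~\ref{P21VI14.1n} applied to it would require control of $\nabla\nabla N$ in the \emph{same} weighted space you are trying to estimate, and no smallness of the commutator rescues the circularity. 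The correct mechanism is the one above: both $|\nabla N|^2$ and the strengthened weight on $|N|^2$ (note the extra factor $x^{-2}\rho^2$) come out of a single application of the Poincar\'e inequality to $zN$, not from a second test tensor or a commutation argument.
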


Note that since the operator involved is of order two, the natural weight functions for the inequality \eq{23V15.1} are
$$
 \phi = \frac x \rho
 \,,
 \quad
  \psi = x^{a-2}z^b \rho^{c+2}
  \,.
$$

\begin{proof}
We will use Proposition \ref{prop:estiS2ah2015} with
$$ Y=z^{-1}\nabla N-N\nabla(z^{-1})=z^{-2}\nabla(zN) \,.$$
We have
$$
\nabla_{(i}Y_{j)}=z^{-1}\nabla_i\nabla_j
N-N\nabla_i\nabla_j(z^{-1})=z^{-1}[(\nabla_i\nabla_j
N-Ng)+NO_g(z)]\,,
$$
then
\bel{23V15.5}
S(Y)-\mathrm{div}Y g=
z^{-1}\Big[\nabla\nabla N-\Delta N
g+(n-1)Ng+O_g(z)N\Big]
 \,.
\ee
Using
\begin{eqnarray*}
\Ric (g) &=&-(n-1){g}+O_g(z)
 \,,
\end{eqnarray*}
we can rewrite \eq{23V15.5} as
$$
 S(Y)-\mathrm{div}Y g=
 z^{-1}[\nabla\nabla N-\Delta N g-N(\Ric (g)+O_g(z))]
 \,.
$$
Now, we use the inequality
$$
|S(Y)-\tr S(Y) g|^2=|S(Y)|^2+(n-2)(\tr S(Y))^2\geq|S(Y)|^2,
$$
and Proposition \ref{prop:estiS2ah2015} with $b=b_{\mathrm{there}}$  replaced by
$b_{\mathrm{there}}=b_{\mathrm{here}}+1=b+1$ yields
\beal{20IX15.23}
 \lefteqn{
\int x^{2a}z^{2b}\rho^{2c}\Big(|\nabla\nabla N-\Delta N g-N\Ric (g)|^2
+O(z^2)N^2\Big) }
 &&
\\
 \nn&&
 \phantom{xxxxxxxxxxxxxxxxxx}
 \ge C \int x^{2a-2}z^{2b-2 }\rho^{2c+2} |\nabla (zN)|^2
 \,.
\eea
For further reference, we note that if $N$ is not compactly supported near the boundary, using \eq{22V15.25}
instead of Proposition~\ref{prop:estiS2ah2015} we will obtain
\beal{20IX15.23b}
 \lefteqn{
\int x^{2a}z^{2b}\rho^{2c}\Big(|\nabla\nabla N-\Delta N g-N\Ric (g)|^2 \Big)
+ \int x^{2a}z^{2b+2}\rho^{2c}\,N^2
  }
 &&
\\
 \nn&&
 \phantom{xxx}
 + \|N\|^2_{H^1(K)}
 \ge C \int x^{2a-2}z^{2b-2 }\rho^{2c+2} |\nabla (zN)|^2
 \,.
\eea

Returning to \eq{20IX15.23}, the right-hand side can be estimated from below as follows,  where
Proposition~\ref{P21VI14.1n}  with $\bmu$ there replaced by $b-1$ and $u=zN$ is
used when going from the third to the fourth line:
%
\beal{20IX15.24}
 \lefteqn{\hspace{1cm}
 \|x^{a}\rho^{c}z^{b-1}\nabla(zN)\|_{L^2}
 }
 &&
\\
 \nn
  & &=
  \epsilon\|x^{a}\rho^{c}z^{b-1}\nabla(zN)\|_{L^2}+(1-\epsilon)\|x^{a}\rho^{c}z^{b-1}\nabla(zN)\|_{L^2}
\\
 \nn
 &&\geq
  \epsilon\|x^{a}\rho^{c}z^{b}\nabla N\|_{L^2}-\epsilon\|x^{a}\rho^{c}z^{b-1}N\nabla z\|_{L^2}+(1-\epsilon)\|x^{a}\rho^{c}z^{b-1}\nabla(zN)\|_{L^2}
\\
 \nn
 &&\geq
  \epsilon\|x^{a}\rho^{c}z^{b}\nabla N\|_{L^2}-\epsilon\|x^{a}\rho^{c}z^{b-1}N\nabla z\|_{L^2}+(1-\epsilon)c\|x^{a-1}\rho^{c+1}z^bN\|_{L^2}
\\
 \nn& &\geq
 \epsilon
  C^{-1}
(\|x^{a}\rho^{c}z^{b}\nabla N \|_{L^2}+\|x^{a-1}\rho^{c+1}z^bN\|_{L^2})
  \,,
\eea
with $\epsilon>0$ small so that
$$
\epsilon\|x^{a}\rho^{c}z^{b-1}N\nabla z\|_{L^2}\le \frac{(1-\epsilon)c}{2}\|x^{a-1}\rho^{c+1}z^bN\|_{L^2}
 \,.
$$

The last term on the left-hand side of \eq{20IX15.23}
can be absorbed in the last      term in \eq{20IX15.24} when either of $x$ or $z$ is small enough on the support of $N$.
\end{proof}

\begin{corollary}
  \label{C23V15}
	Let $\phi= x/\rho$, $\psi=x^{a-2} z^b \rho^{c+2}$, and suppose that $\mathcal F \subset \mathring H^2_{\phi,\psi}$ is a
   closed subspace of $ \mathring H^2_{\phi,\psi}$ transverse to the kernel of $P^*_g$. Then
 for all  $b\neq (n+1)/2, (n-1)/2, (n-3)/2$, $c>-|n-1-2b|$ and for all $a$ sufficiently large there exists a  constant $C_2(a,b,c, \mathcal F)>0$ such that for all $N\in \mathcal F$ we have
\bel{3X15.2}
	\|
         \phi^2\big(\nabla\nabla N-\Delta N g-N \Ric (g)\big)
            \|_{L^2_\psi}
                 \geq C(a,b,c,\mathcal F)
        \|N\|_{\mathring H^2_{\phi,\psi}}
 \,.
\ee
\end{corollary}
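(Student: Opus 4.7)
\proof[Proof proposal]
The plan is to mimic the structure of the proof of Theorem~\ref{TC22V15}, with Proposition~\ref{prop:estiS2ah2015} replaced by Proposition~\ref{prop:estiN2015}, and with the weighted Poincar\'e inequality of Theorem~\ref{T21XI15.1} (or more precisely its variant~\eq{inegapoincareglobalmodK}) playing the role of the weighted Korn inequalities in the non-corner regions. First, I would introduce the same product cutoff $\chi_1(x)\chi_2(z)$ adapted to the thresholds $x(a,b,c)$ and $z(a,b,c)$ appearing in Proposition~\ref{prop:estiN2015}, together with $\psi_i=1-\chi_i$, and decompose
\[
 N = \chi_1\chi_2 N + \chi_1\psi_2 N + \psi_1\chi_2 N + \psi_1\psi_2 N.
\]

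For the corner piece $\chi_1\chi_2 N$, Proposition~\ref{prop:estiN2015} applies directly and controls
$\|x^{a-1}z^b\rho^{c+1}(\chi_1\chi_2 N)\|_{L^2}+\|x^a z^b\rho^c\nabla(\chi_1\chi_2 N)\|_{L^2}$
by $\|x^a z^b \rho^c P^*_g(\chi_1\chi_2 N)\|_{L^2}$, up to commutator terms supported on $\{\nabla(\chi_1\chi_2)\ne 0\}$. For the pieces $\chi_1\psi_2 N$ and $\psi_1\chi_2 N$, which live respectively in a horizontal strip away from the conformal boundary and in a vertical strip away from $\{x=0\}$, neither $x$ nor $z$ degenerates simultaneously, so the operator $P^*_g$ is uniformly elliptic in the scale $\phi=x/\rho$. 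Here I would use \eq{20IX15.23b} together with the weighted Poincar\'e inequalities of Section~\ref{s18IX15.6} (Theorem~\ref{T21XI15.1} combined with \eq{inegapoincareglobalmodK}) applied to $\nabla(zN)$, or the analogous horizontal variant obtained by reducing to the setting of Section~\ref{ss8XI15.1}, to get the same weighted estimate modulo an $L^2$-norm over a compact set. The remaining piece $\psi_1\psi_2 N$ is supported in a fixed compact set of $\Omega$, where the weights are bounded above and below and everything is controlled by $\|N\|_{H^2(K)}$.

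Adding the four estimates and handling the commutator terms (which are supported on compact sets by construction), I obtain
\[
 \|N\|^2_{\mathring H^2_{\phi,\psi}}
 \le C\bigl(\|\phi^2(\nabla\nabla N-\Delta N\,g-N\Ric(g))\|^2_{L^2_\psi}+\|N\|^2_{H^1(K)}\bigr)
\]
for some compact $K\subset\Omega$, valid for all $N\in \mathring H^2_{\phi,\psi}$. Finally, I would run the standard contradiction argument: if \eq{3X15.2} failed on $\mathcal F$, one could extract a sequence $N_j\in\mathcal F$ with $\|N_j\|_{\mathring H^2_{\phi,\psi}}=1$ and $\|\phi^2 P^*_g N_j\|_{L^2_\psi}\to 0$; by compactness of $H^2(K)\hookrightarrow H^1(K)$ a subsequence converges in $H^1(K)$, the displayed inequality upgrades this to convergence in $\mathring H^2_{\phi,\psi}$, and the limit $N_\infty\in\mathcal F$ satisfies $P^*_g N_\infty=0$, contradicting transversality of $\mathcal F$ to $\ker P^*_g$.

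The main technical obstacle I anticipate is the analogue of \eq{20IX15.24} away from the corner: in the horizontal/vertical strip regions one still needs to extract a Poincar\'e-type bound on $zN$ with the correct weight powers, and to verify that the shift $b\to b+1$ coming from the substitution $Y=z^{-2}\nabla(zN)$ does not hit the forbidden values $(n\pm 1)/2$, which explains the additional exclusion $b\ne(n-3)/2$ in the hypothesis. Once the shifted weight is checked against Theorem~\ref{T21XI15.1} and the lower-order $Nz$ term is absorbed as in \eq{20IX15.24}, the rest of the bookkeeping is routine.
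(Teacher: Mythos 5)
Your core mechanism is the one the paper actually uses: reduce to the Korn-type estimate for $Y=z^{-2}\nabla(zN)$, which is exactly what \eq{20IX15.23b} encodes, recover $N$ from $\nabla(zN)$ through the global weighted Poincar\'e inequality \eq{inegapoincareglobalmodK} (this is the paper's step \eq{20IX15.24b}), and finish with the compactness/contradiction argument on a subspace transverse to $\ker P^*_g$; your reading of the excluded values of $b$ (the shift $b\mapsto b+1$ in the Korn estimate producing $b\ne(n-3)/2$, the shift $b\mapsto b-1$ in the Poincar\'e step applied to $zN$ producing $b\ne(n+1)/2$) is also correct. The paper, however, performs \emph{no} new cutoff decomposition at this stage: since \eq{20IX15.23b} and \eq{inegapoincareglobalmodK} are already global (the splitting into corner, strips and compact core was done once and for all in the proofs of Theorem \ref{TC22V15} and Theorem \ref{T21XI15.1}), the corollary follows directly from these two inequalities plus the absorption \eq{3X15.5}, giving \eq{3X15.21} and then the contradiction argument; the upgrade from that $H^1$-level bound to the $\mathring H^2_{\phi,\psi}$-norm in \eq{3X15.2}, via the trace identity expressing $\Delta N$ through $P^*_gN$ and $N$, is standard and left implicit both by you and by the paper.

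The concrete flaw in your write-up is the parenthetical claim that the commutator terms produced by your four-fold splitting of $N$ are ``supported on compact sets by construction''. They are not: $\{\nabla(\chi_1\chi_2)\ne 0\}$ contains the vertical strip $\{x(a,b,c)/2<x<x(a,b,c),\ 0<z<z(a,b,c)\}$ reaching the conformal boundary $\{z=0\}$, and the horizontal strip $\{0<x<x(a,b,c),\ z(a,b,c)/2<z<z(a,b,c)\}$ reaching $\{x=0\}$ — this is precisely why the proof of Theorem \ref{TC22V15} needs Propositions \ref{propKornbande} and \ref{propKornbandex} (cf.\ \eq{26X15.11}), and why Theorem \ref{T21XI15.1} needs Propositions \ref{PoincarebandeV} and \ref{PoincarebandeH}. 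If you keep the decomposition you must control $N$ and $\nabla N$ on these non-compact strips by strip estimates, i.e.\ redo work already packaged into the global inequalities; the cleaner repair is to drop the decomposition altogether, as the paper does. A second, smaller point: the lower-order term $\int x^{2a}z^{2b+2}\rho^{2c}N^2$ on the left of \eq{20IX15.23b} can no longer be absorbed by smallness of $x$ or $z$ on the support of $N$ (unlike in Proposition \ref{prop:estiN2015}, $N$ is not supported near the corner); the paper disposes of it by the splitting \eq{3X15.5}, at the price of enlarging the compact set, and your ``absorbed as in \eq{20IX15.24}'' glosses over this. With these two repairs your argument coincides with the paper's proof.
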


\begin{proof}
We return to the argument of Proposition~\ref{prop:estiN2015}. The calculations which follow immediately after
\eq{20IX15.23b} are instead carried-out as follows:
\beal{20IX15.24b}
 \lefteqn{\hspace{1cm}
 \|x^{a}\rho^{c}z^{b-1}\nabla(zN)\|_{L^2}
 }
 &&
\\
 \nn
 &\geq &
  \epsilon\|x^{a}\rho^{c}z^{b}\nabla N\|_{L^2}-\epsilon\|x^{a}\rho^{c}z^{b-1}N\nabla z\|_{L^2}
+(1-\epsilon)\|x^{a}\rho^{c}z^{b-1}\nabla(zN)\|_{L^2}
\\
 \nn
 &\geq &
  \epsilon\|x^{a}\rho^{c}z^{b}\nabla N\|_{L^2}-\epsilon\|x^{a}\rho^{c}z^{b-1}N\nabla z\|_{L^2}
 \\
 \nn
 &&
 +(1-\epsilon)c\big(\|x^{a-1}\rho^{c+1}z^bN\|_{L^2} - \|N\|_{L^2(K)}
 \big)
\\
 \nn&\geq &
 \epsilon
  C^{-1}
(\|x^{a}\rho^{c}z^{b}\nabla N \|_{L^2}+\|x^{a-1}\rho^{c+1}z^bN\|_{L^2})
 -(1-\epsilon) \|N\|_{L^2(K)}
  \,,
\eea
where $K$ is a suitable compact set as in \eqref{inegapoincareglobalmodK}, for all $\epsilon$ small enough.

Next, we write
\bean
 \nn\lefteqn{
 \int x^{2a}z^{2b+2}\rho^{2c}\,N^2 \le
\int_{z\le \epsilon} x^{2a-2}z^{2b}\rho^{2c+2}\, \frac{x^2 z^2}{\rho^2}N^2
 }
 &&
\\
\nn
 &&
 +
\int_{x\le \epsilon} x^{2a-2}z^{2b}\rho^{2c+2}\, \frac{x^2 z^2}{\rho^2}N^2 +
\int_{x\ge \epsilon\,,\ z \ge \epsilon} x^{2a-2}z^{2b}\rho^{2c+2}\, \frac{x^2 z^2}{\rho^2}N^2
\\
 \nn
 &&
 \le
\epsilon^2 \int_{z\le \epsilon} x^{2a-2}z^{2b}\rho^{2c+2}\, N^2 +
 \epsilon^2 \int_{x\le \epsilon} x^{2a-2}z^{2b}\rho^{2c+2}\, N^2
  + C \int_{K} N^2
\\
 \nn
 &&
 \le
2 \epsilon^2 \int x^{2a-2}z^{2b}\rho^{2c+2}\, N^2
  + C \int_{K} N^2
 \,,
\eeal{3X15.5}
making the compact set $K=K(\epsilon)$ larger if necessary. Inserting all this into \eq{20IX15.23b} we obtain, for $\epsilon$ small and after some rearrangements,
\bel{3X15.21}
	\|
         \phi^2\big(\nabla\nabla N-\Delta N g-N \Ric (g)\big)
            \|_{L^2_\psi}+\|N\|_{\mathring H^1_{\phi,\psi}(K)}
                 \geq C(a,b,c)
        \|N\|_{\mathring H^1_{\phi,\psi}}
 \,.
\ee
The end of the proof is the usual contradiction argument.
\end{proof}

\appendix

\section{No KIDs}
 \label{s2X15.1}

Let $h=\delta g$ and $Q=\delta K$, the linearisation $ P_{(K,g)}$
of the constraints map at $(K,g)$ reads \be \label{2}
P_{(K,g)}(Q,h)= \left(
\begin{array}{l}
-K^{pq}\nabla_i h_{pq}+K^q{}_i(2\nabla^j h_{qj}-\nabla_q h^l{}_l)\\
\;\;\;\;\;\;\;-2\nabla^jQ_{ij}+2\nabla_i\;\tr  Q
-2(\nabla_iK^{pq}-\nabla^qK^p{}_i)h_{pq}\\
  \\
-\Delta(\tr  h)+\divr    \divr    h-\langle h,\Ricc(g)\rangle +2K^{pl}K^q{}_l h_{pq}\\
\;\;\;\;\;\;\;-2\langle K,Q\rangle +2\tr  K(-\langle h,K\rangle
+\tr Q)
\end{array}
\right)\,. \ee

Recall that a KID is defined as a solution $(N,Y)$ of the set of
equations $P_{(K,g)}^*(Y,N)=0$, where $P_{(K,g)}^*$ is the formal
adjoint of $P_{(K,g)}$:
\bea
 \label{4}
&&
\\
 \nn
  \lefteqn{
   P_{(K,g)}^*(Y,N)
   =
   }
   &&
\\
 \nn
  &&
   \left(
\begin{array}{l}
2(\nabla_{(i}Y_{j)}-\nabla^lY_l g_{ij}-K_{ij}N+\tr K\; N g_{ij})\\
 \\
\nabla^lY_l K_{ij}-2K^l{}_{(i}\nabla_{j)}Y_l+
K^q{}_l\nabla_qY^lg_{ij}-\Delta N g_{ij}+\nabla_i\nabla_j N\\
\; +(\nabla^{p}K_{lp}g_{ij}-\nabla_lK_{ij})Y^l-N \Ricc(g)_{ij}
+2NK^l{}_iK_{jl}-2N \tr_gK K_{ij}
\end{array}
\right)
 \,.
\eea
We denote by $\mcK(\Omega)$ the set of KIDs
defined  on an open set $\Omega$.

In order to analyze the set of KIDs in an asymptotically hyperbolic setting, it is convenient to rewrite the KID equations in the following equivalent form
\bea
 \label{2X15.4}
 \nabla_{(i}Y_{j)}
  & = & K_{ij}N
 \,,
\eea
\beal{2X15.2}\;\;\;\;
 \nabla_i \nabla_j N
  & =& \big(
   \Ricc(g)_{ij}
    - 2K^l{}_iK_{jl} +\tr_gK K_{ij}
    - K^{ql}K_{ql}g_{ij}\big)
        N
     + \Delta N g_{ij}
\\
 \nn&&
  -(\nabla^{p}K_{lp}g_{ij}-\nabla_lK_{ij})Y^l
   + 2K^l{}_{(i}\nabla_{j)}Y_l
 \,.
\eea
Taking traces, we obtain
\be
 \Delta N
  = -\frac{1}{n-1}\bigg(
   \big(
    R
    +(\tr_gK )^2
    - n K^{ql}K_{ql} \big)
        N
  -\big(n \nabla^{p}K_{lp} -\nabla_l\tr_gK\big)Y^l \bigg)
 \,,
  \label{2X15.3}
\ee
which allows one to eliminate the second derivatives of $N$ from the right-hand side of \eq{2X15.2}, leading to
%
%
\beal{2X15.5}
\lefteqn{\hspace{1cm}
 \nabla_i \nabla_j N
  =
  }
 &&
\\
\nn
 &&
   \bigg(
   \Ricc(g)_{ij}
    - 2K^l{}_iK_{jl} +\tr_gK K_{ij}
     + \frac{1}{ 1-n }
   \big(
    R
    +(\tr_gK )^2
    -  K^{ql}K_{ql} \big) g_{ij}
 \bigg)
        N
\\
 \nn&&
  +\big(\nabla_lK_{ij}+ \frac{1}{n-1}(\nabla^{p}K_{lp}- \nabla_l \tr_gK)g_{ij}
   \big)Y^l
   + 2K^l{}_{(i}\nabla_{j)}Y_l
 \,.
\eea

Let us show  that metrics that are $C^{1,\alpha}$-compactifiable for some $\alpha\in (0,1]$ (equivalently, $z^2 g$ is $C^{1,\alpha}$-extendible across $\{z=0\}$), and whose derivatives up to order three satisfy a weighted condition,
have no non-trivial static KIDs which are $o_g(1/z)$. Further, initial data sets with such $g$'s and for which $K$ is proportional to the metric to leading order, with a constant proportionality factor, and whose derivatives up to order two satisfy a weighted condition, have no nontrivial KIDs.  Indeed, we have:

\medskip

\begin{Proposition}
  \label{P2X15.1}
  Let $\alpha>0$, $\tau \in \R$, and consider a metric $g\in M_{\mathring g+ C^3_{1,z^{-1-\alpha}}}$ (see Definition~\ref{D21IX15.1}), where $\zg$ is $C^3$-compactifiable. Suppose that $K-\mathring K -\tau g \in {C^2_{1,z^{-1-\alpha}}}$, where
  $$
   \mathring K \in C^2_{1,z^{-1}}
    \,.
  $$
Let $(N,Y)$ be in the kernel of $P^*_{(K,g)}$ and suppose that there exists $\lambda>0$ such that $N,Y\in C^2_{1,z^{-\lambda+1}}$. Then $(N,Y)\equiv 0$.
\end{Proposition}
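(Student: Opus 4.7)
The plan is to combine an asymptotic analysis at the conformal boundary with the first-order ODE structure of the KID system. Via commutation of covariant derivatives applied to \eqref{2X15.4}, together with the curvature identity $\nabla_i\nabla_j Y_k - \nabla_j\nabla_i Y_k = R^l{}_{kij} Y_l$ and the formula \eqref{2X15.5} for $\nabla\nabla N$, the antisymmetric derivative $\nabla_{[i}Y_{j]}$ is algebraically determined by the tuple $(N, Y, \nabla N)$ and the background $(K,g)$. Consequently $\Psi := (N, Y, \nabla N, \nabla_{[i}Y_{j]})$ satisfies a linear first-order ODE along any smooth curve in $\Omega$ with coefficients that are bounded on compact subsets (depending on $g$, $K$, and their first two derivatives, all of which are well-defined in the interior by the hypotheses). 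Standard ODE uniqueness therefore reduces the statement to showing that $\Psi \equiv 0$ on some open neighbourhood of the conformal boundary, since vanishing on such a neighbourhood propagates to all of $\Omega$ by connectedness.

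To establish vanishing near the boundary, I would work in coordinates where $g = z^{-2}(dz^2 + h(z,\theta^A))$. Using the hypotheses $g - \zg \in C^3_{1, z^{-1-\alpha}}$ and $K - \tau g - \mathring K \in C^2_{1, z^{-1-\alpha}}$ with $\mathring K \in C^2_{1, z^{-1}}$, the principal symbol of the KID system at $z=0$ reduces to the exact hyperbolic model, where substitution of \eqref{2X15.4} into \eqref{2X15.5} yields $\nabla_i\nabla_j N = N g_{ij}$ (the $\tau$-dependent contributions cancelling as in the computation preceding Proposition \ref{P2X15.1}). The radial projection gives the Euler equation
\begin{equation*}
 z^2\partial_z^2 N + z\partial_z N - N = 0,
\end{equation*}
with indicial roots $\mu = \pm 1$; on the half-space model the full KID space is $(n{+}1)$-dimensional, each KID admitting an expansion $N = z^{-1}N_-(\theta) + zN_+(\theta) + \ldots$ in bijection with a boundary function $N_-$ ranging over polynomials in $\theta$ of degree at most two (spanned by $1/z$, $\theta^i/z$, and $(z^2+|\theta|^2)/z$), with a completely analogous expansion for $Y$. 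The decay hypothesis $|N|+|Y|_g = O(z^{\lambda-1}) = o(z^{-1})$ eliminates the $\mu = -1$ indicial term, forcing $N_- \equiv 0$ and $Y_- \equiv 0$.

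On the model hyperbolic space, $N_- \equiv 0$ immediately implies $N \equiv Y \equiv 0$ by the explicit parametrisation of the KID space. For the perturbed problem, I would run a bootstrap on the boundary expansion: the vanishing of the leading indicial coefficient, combined with the coupling between $N$ and $Y$ through \eqref{2X15.4}--\eqref{2X15.5}, propagates to the subleading $z^{+1}$ coefficient and then to all higher orders, with the $O(z^{1+\alpha})$ corrections from $g-\zg$, $K-\tau g-\mathring K$, and $\mathring K$ contributing only lower-order perturbations of the indicial structure. A Fuchsian-type uniqueness for the radial ODE --- a solution both of whose indicial components vanish at $z=0$ must itself vanish --- then gives $\Psi \equiv 0$ in a neighbourhood of $\partial M$, whence the conclusion by the first step. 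The main obstacle is making the bootstrap rigorous in the finite-regularity setting: one must carefully track how the vanishing of the leading data $(N_-, Y_-)$ propagates through the coupled subleading expansion when the coefficients are only $C^3$ and $C^2$ rather than analytic, so that the indicial propagation is not spoiled by resonances between the decay rate $\lambda$ and the indicial exponents $\pm 1$, which is the reason for the assumption $\lambda > 0$.
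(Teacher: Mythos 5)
Your overall architecture (interior propagation via the first--order ODE structure of the KID system, plus an asymptotic analysis at $\{z=0\}$ using the decay hypothesis to kill the $z^{-1}$ mode) is the same as the paper's, and your first step is fine --- indeed more explicit than in the paper, where the reduction to a boundary neighbourhood is left implicit. The genuine gap is in the boundary analysis. Your key mechanism, ``a solution both of whose indicial components vanish at $z=0$ must itself vanish'', is invoked without showing that the $z^{+1}$ indicial component vanishes; the assertion that vanishing of the leading data ``propagates to the subleading $z^{+1}$ coefficient'' is exactly the non-trivial resonance step, and it cannot follow from the radial (Fuchsian) ODE alone, since the $z^{+1}$ branch is perfectly compatible with the hypothesis $N,Y=o(z^{-1})$. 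In the paper this step is the special case $\rho=1$ of the iteration: one shows $f(z)=Az$ with $A$ constant, computes $\lim_{z\to0}z\partial_a\partial_bN=2A\ringh_{ab}$ from the tangential components of \eq{2X15.5}, and kills $A$ by comparing with \eq{16X15.1} and using non-degeneracy of $\ringh$; similarly the $1/z$-mode $f(\theta)$ in $Y_z$ is killed using the tangential components \eq{2X15.4a} of the Killing-type equation, not the radial equation. None of this is supplied by your ``coupling between $N$ and $Y$'' remark.

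A second, related gap: even if all expansion coefficients were shown to vanish, in this finite-regularity setting (coefficients only $C^3$/$C^2$, errors $O(z^{1+\alpha})$, no polyhomogeneity) decay to all orders does not by itself give $(N,Y)\equiv0$ near the boundary. The paper handles this by an iteration on the decay exponent (the set $I$ and $\hat\rho=\sup I$, improving the rate by $\alpha$ at each step while avoiding the resonant values, whence $\hat\rho=\infty$), followed by a Gronwall-type argument: from \eq{2X15.5} and \eq{2X15.16} one gets $z\partial_z f\le Cf$ for $f:=N^2+|Y|_g^2+|\nabla N|_g^2+|\nabla Y|_g^2$, i.e.\ $\partial_z(z^{-C}f)\le0$, which together with the arbitrarily fast decay forces $f\equiv0$. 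Your proposal needs both the resonance argument and this last differential-inequality step (or equivalents) to be a proof; as written it names the obstacles but does not overcome them. Minor further inaccuracies: the full KID space of the hyperbolic model is not $(n+1)$-dimensional (that count is only for the static part $N$; the Killing vectors contribute as well), and the boundary data for $Y$ must be treated on the same footing as for $N$.
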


\begin{remark}{\rm
 \label{R4X15.1}
 Initial data such that $g$ is $C^3$-compactifiable and $z^2 K$ is $C^2$ up-to-the conformal boundary satisfy our differentiability hypotheses.
\myqed
}\end{remark}

\begin{remark}{\rm
 \label{R4X15.2}
 The argument below can be used to derive an asymptotic expansion for non-vanishing KIDs, but this is of no concern to us here.
\myqed
}\end{remark}

\proof
Define
$$
 I:=\{\rho\,:\ \exists \, C>0 \ \mbox{such that}\ |N|+|Y|_g+|\nabla N|_g+|\nabla Y|_g\le C z^\rho
  \ \mbox{for small $z$}\}
  \,.
$$
By hypothesis $I$ is non-empty. Setting
$$
  \hat \rho := \sup I
 \,,
$$
it holds that    $\hat \rho \ge  \lambda-1 >-1$.

Without loss of generality, decreasing $\alpha$ if necessary we can assume that
$$
 0<\alpha\le 1
 \,.
$$

We wish, first, to show that $\hat \rho = \infty$. Suppose, for contradiction, that $\hat \rho<\infty$. Let $\rho\in (\hat \rho-\alpha/2, \hat \rho)$; replacing $\rho$ by a slightly larger number if necessary we can assume that
\bel{4X15.5}
 \mbox{$\rho\in (\hat \rho-\alpha/2, \hat \rho)$, $\rho+\alpha\ne 0$,  $\rho+\alpha\ne 1$,  $\rho+\alpha/2\ne 0$,  $\rho+\alpha/2 \ne 1$, $\rho>-1$.}
\ee
\Eq{2X15.5} yields
\bel{4X15.11}
 |\nabla \nabla N|_g = O(z^\rho)
\,.
\ee
A standard calculation using
\eq{2X15.4} gives
\bel{2X15.16}
 \nabla_i \nabla_j Y_k = R_{\ell i j k} Y^\ell
 - \nabla_k (N K_{ij} )
 + \nabla_i (N K_{kj} )
 + \nabla_j (N K_{ik} )
 \,,
\ee
which implies
\bel{4X15.11a}
 |\nabla \nabla Y|_g = O(z^\rho)
\,.
\ee

There exists a coordinate system (see~\cite[Appendix~B]{AndChDiss}) in which $g$ can be written as
$$
 g = z^{-2}\big((1+O(z^{1+\alpha}))dz^2 + h_{ab} d\theta^a d\theta^b + O(z^{1+\alpha})_{a} d\theta^a dz
    \big)
  \,.
$$
Setting $ \mathring h_{ab}= h_{ab}|_{z=0}$ and $ \mathring h'_{ab}= \partial_zh_{ab}|_{z=0}$, it holds that
\begin{eqnarray}
 \label{christ_zg1}
 &
  \Gamma^c_{ab} =  %
 \Gamma[{\ringh}{}]^c_{ab} +
  O(z^{\alpha})
 \,,
\quad
 \Gamma^z_{ab}= z^{-1} {\ringh}{}_{ab} +\frac12{\ringh}{}'_{ab}+O(z^{\alpha})
\,,
\\
 &
 \Gamma^z_{za}  =    O(z^{\alpha})
\,,
 \quad
\label{christ_zg2}
     \Gamma^z_{zz}
      =   -z^{-1} +O(z^{\alpha})\,,
      &
\\
 &
 \Gamma^c_{za}= -z^{-1} \delta_a^c - \frac12\ringh^{cd}{\ringh}'_{da}+O(z^{\alpha})
\,,
\quad
 \Gamma^c_{zz}   = O(z^{\alpha})
\,.
 &
\end{eqnarray}
We can calculate $\nabla_z\nabla_iN$,   and $\nabla_z Y_i$ using \eq{2X15.4} and \eq{2X15.5},
obtaining thus
\beal{2X15.6}
 \big(z^2 \partial^2_z + z \partial_z - 1\big)N & = &  O(z^{\alpha+\rho })=:\psi
  \,,
\\
 \label{2X15.7}
 \partial_z\big(  \partial_a (z N )\big)+ \frac 12\ringh^{cd}{\ringh}'_{da}\partial_c (zN) & = &  O(z^{\alpha+\rho })=:\psi_a
  \,,
\\
 \label{2X15.8}
 z\partial_z(z Y_z) & = &  N \, O(1)+  O(z^{\alpha+\rho }):= \lambda
  \,,
\\
 \label{2X15.9}
 \partial_z(z^2 Y_a) + \ringh^{cd}{\ringh}'_{da}(z^2Y_c)& = & -
 \partial_a(z^2 Y_z) + \underbrace{O(z^{\alpha+\rho +1})}_{=: \lambda_a}
  \,.
\eea
Scaling the variable $z\in[0,z_0]$ if necessary, we can without of generality assume that $z_0\ge 1$.
\Eq{2X15.7} can be thought of as a system of ODEs of the form
\bel{8XI15.1}
 \partial_zZ=AZ+\psi
 \,,
\ee
with $A(\theta)=-\frac12\mathring h'\mathring h^{-1}$ and
$
 Z_a:=\partial_a(zN)
$.
We define the matrix
$$
W(z,\theta):=\exp(zA(\theta)).
$$
The solution of \eq{8XI15.1} can be explicitly written as
\bel{1XI15.1}
 W^{-1}(z,\theta)Z(z,\theta) = -\int_z^1 \left(W\psi\right)(s,\theta) ds +  W^{-1}(1,\theta) Z(1,\theta)
 \ee
Since $\ringh_{ab}$ is $C^3$, the matrix  $W(z,\theta)$  is twice-differentiable up-to-$\{z=0\}$ in all variables.
In particular the limit $z\to0$ of the right-hand side of \eq{1XI15.1}, and hence of $(W^{-1}Z)_a$, exists and we have
\beal{1XI15.2}
  (W^{-1}Z)_a(0,\theta)
  &= &
     -\int_0^1 \left(W_{a}{}^b \psi_b\right)(s,\theta) ds  +  (W^{-1}Z)_a(1,\theta)
 \,,
\\
  (W^{-1}Z)_a(z,\theta)
 \nn &= &  (W^{-1}Z)_a(0,\theta)+
 \underbrace{ \int_0^z \left(W_{a}{}^b \psi_b\right)(s,\theta) ds}_{=:(W^{-1}){}_a{}^b\chi_b(z,\theta)= O(z^{\alpha+\rho +1})}
 \,.
\eea
We conclude that there exist differentiable functions $C_a(\theta):= Z_a(0,\theta)$ such that
$$
z\partial_a N =C_a(\theta)+ O(z^{\alpha+\rho +1})
 \,.
$$
Given a point of $\partial M$ with local coordinates $
\theta_0$,
integration in $\theta$ near $\theta_0$ gives
\beaa
 N(z,\theta)
   &= &
     \underbrace{N(z,\theta_0)}_{ O(z^{ \rho }) } + z^{-1}\int_{t=0}^1  C_a(t \theta+(1-t)\theta_0) (\theta^a-\theta_0^a)\, dt
\\
 &&
       +
  \underbrace{z^{-1}\int_{t=0}^1 \chi_a(z, t \theta+(1-t)\theta_0)(\theta^a-\theta_0^a)\, dt}_{ O(z^{\alpha+\rho }) }
 \,.
\eeaa
Taking into account the condition $N=o(1/z)$  gives   $C_a\equiv0$, so that
there exists a function $f(z)=O(z^\rho)$  such that
\bel{16X15.1}
N = f(z)  + O(z^{\alpha+\rho })
 \,,
  \quad
 z\partial_a N =  O(z^{\alpha+\rho  })
 \,.
\ee

Assume, first, that $\alpha+\rho< 1$.
Integrating \eq{2X15.6}, there exists a function $A(\theta)$ such that
$$
 N = A z -  \frac z2 \int_z^1  \frac{\psi(s)}{s^2}ds
 -  \frac 1{2z}  \int_0^z  {\psi(s)} ds =  O(z^{\alpha+\rho})
  \,.
$$
(A general solution of  \eq{2X15.6}  would have a supplementary term $B(\theta)/z$, which must be zero by the boundary conditions.)
Further
$$
 z\partial_z N = Az -  \frac z2 \int_z^1  \frac{\psi(s)}{s^2}ds
 +  \frac 1{2z}  \int_0^z  {\psi(s)} ds =   O(z^{\alpha+\rho})
  \,.
$$
If $\alpha + \rho>1$ we can write instead
\beal{16X15.2}
 N  & = &  Az+  \frac z2 \int_0^z  \frac{\psi(s)}{s^2}ds
 -  \frac 1{2z}  \int_0^z  {\psi(s)} ds = Az+  O(z^{\alpha+\rho})
  \,,
\\
 z\partial_z N  &= & Az+  \frac z2 \int_0^z  \frac{\psi(s)}{s^2}ds
 +  \frac 1{2z}  \int_0^z  {\psi(s)} ds =  Az+ O(z^{\alpha+\rho})
  \,.
\eeal{4X15.1}

The case $\rho=1$ requires special consideration. (Note that we can always increase $\rho$ slightly if necessary if $\hat \rho$ is \emph{not} attained, so $\rho=1$ needs to be considered in our argument only if $\hat \rho=1$ and is attained.) In this case, comparing \eq{16X15.1} with  \eq{16X15.2} we see that
$ f (z)  = A  z$, so that $A$ is constant. We then have, in local coordinates, using \eq{christ_zg1}-\eq{christ_zg2},
$$
 \nabla_a \nabla_b N = \partial_a \partial_b N - \Gamma^k_{ab}\partial_k N = \partial_a \partial_b N - \frac A z \ringh_{ab} + O(z^{\alpha -1})
 \,,
$$
while the right-hand side of \eq{2X15.5}
 equals $ \frac A z \ringh_{ab}  + O(z^{\alpha -1})$. Hence the limit $\lim_{z\to 0} z \partial_a \partial_b N$ exists and is equal to
 $2 A \ringh_{ab}$. Using \eq{16X15.1}, for all nearby $\theta$ and $\theta_1$ we have
\beaa
 0 &= &  \lim_{z\to0}z\partial_aN(z,\theta_1)
 -
 \lim_{z\to0}z\partial_aN(z,\theta)
\\
  & = &
  \lim_{z\to0} z(\theta_1^b-\theta^b) \int_{0}^1 \partial_a\partial_b   N(z,(t \theta_1+(1-t)\theta ))
  \, dt
\\
  &  = &
    2  A  (\theta_1^b-\theta^b)
    \int_{0}^1\ringh_{ab}  (t \theta_1+(1-t)\theta ) \, dt
      \,.
\eeaa
Since this holds for all $\theta$ and $\theta_1$, and $\ringh$ is non-degenerate, we see that   $A=0$.

Taking into account that $A$ must vanish as well if $\rho>1$, we have  shown that
\bel{4X15.2}
 |N| + |\nabla  N|_g =
  \left\{
    \begin{array}{ll}
      O(z^{\alpha +\rho}), & \hbox{$\alpha + \rho < 1$ or $\rho\ge 1$;} \\
      O(z), & \hbox{$\alpha + \rho > 1$ and $\rho<1$.}
    \end{array}
  \right.
\ee
When $\rho<1$, we can decrease $\alpha$ if necessary to have  $\alpha+\rho<1$, which we assume from now on.
We conclude that
\bel{4X15.3}
 |N| + |\nabla  N|_g =
      O(z^{\alpha +\rho})
\,.
\ee
%
\Eq{2X15.5} gives
\bel{4X15.7}
 |\nabla \nabla  N|_g =
      O(z^{\alpha +\rho})
\,.
\ee

The right-hand side of \eq{2X15.8}, as well as the $z\partial_b$-derivatives thereof of order one and two, are now $O(z^{\alpha+\rho})$. Integrating in $z$ the resulting equations, one finds
\bel{1XI15.5}
 Y_z(z,\theta) = \left\{
                   \begin{array}{ll}
                   \displaystyle
   z^{-1}Y_z(1,\theta) - z^{-1} \int_z^1 \frac{\lambda(s,\theta)}s\, ds, & \hbox{$\alpha+\rho<0$;} \\
                   \displaystyle
                      z^{-1}Y_z(1,\theta) - z^{-1} \int_0^1 \frac{\lambda(s,\theta)}s\, + z^{-1} \int_0^z \frac{\lambda(s,\theta)}s\, ds, & \hbox{$\alpha+\rho>0$.}
                   \end{array}
                 \right.
\ee
In either case there exists a function $f(\theta)$
such that
%
\bel{2X15.10asdf}
 Y_z = \frac{ f(\theta)}z + O(z^{\alpha+\rho-1})
 \,,
\ee
with a similar behavior for $z\partial_b$-derivatives of order one and two
of $Y_z$.
The function $f$ is clearly as differentiable as $Y_z$ if $\alpha+\rho<0$,
and so is the error term in this case. On the other hand, $f$ is in $C^{k+\sigma} $
for any $0\le k+\sigma < \alpha+\rho $  when $\alpha+\rho>0$ (as follows from elementary estimates and the interpolation inequality in the $\theta$-variables,
$$
 \|\lambda(s,\cdot)\|_{C^{0,\sigma}} \le C \|\lambda(s,\cdot)\|_{C^{0 }}^{1-\sigma} \|\lambda(s, \cdot) \|_{C^{1}}^\sigma
\,,
$$
applied to the integrand of the middle term at the right-hand side of the second line of \eq{2X15.10asdf})), but more regularity is not clear. For this reason it is convenient to replace $f(\theta)$ by a function  $f(z,\theta) $ as in~\cite[Lemma~3.3.1 and Corollary 3.3.2]{AndChDiss} which is smooth for $z>0$, which approaches  $f(\theta)$ as $O(z^{\alpha/2+\rho})$,   with $f(z,\theta)\equiv 0$ if $f(\theta)\equiv 0$, and with derivatives behaving in an obvious  weighted way, in particular
\bel{3XI15.1}
 \partial_af(z,\theta)= O(z^{\min(0,\alpha/2+\rho-1)})\;,\;\;\partial_a \partial_b   f(z,\theta) = O(z^{\min(0,\alpha/2+\rho-2)})
 \,.
\ee
To avoid annoying factors of two, from now on the symbol $\alpha$ stands for one-half of the previously used value of $\alpha$.
This leads to
%
\bel{2X15.10z}
 Y_z = \frac{ f(z, \theta)}z + \frac{ f(\theta)-f(z,\theta)}z + O(z^{\alpha+\rho-1}) =  \frac{ f(z, \theta)}z + O(z^{\alpha+\rho-1})
 \,.
\ee
We conclude that
\bel{2X15.10}
 Y_z = \frac{ f(z,\theta)}z + O(z^{\alpha+\rho-1})
 \,,
  \quad
 z\partial_i Y_z = z\partial_i\left(\frac{ f(z,\theta)}z \right) + O(z^{\alpha+\rho-1})
 \,.
\ee
Differentiating \eq{2X15.8} leads further to
\bel{2X15.10+}
 z^2\partial_i \partial_z Y_z =  z^2 \partial_i\partial_z \left(\frac{ f(z,\theta)}z \right) + O(z^{\alpha+\rho-1})
 \,.
\ee
Inserting \eq{2X15.10} into \eq{2X15.9} and its $\partial_b$--derivative, and solving the ODE  we obtain
\bel{2X15.11}
 Y_a = - \frac 12 \partial_a f (z,\theta) + O(z^{\alpha+\rho-1} )
  \,,
 \quad
 z\partial_i Y_a = - \frac z2 \partial_i\partial_a f (z,\theta) +
    O(z^{\alpha+\rho-1})
 \,.
\ee
The $ab$-components of \eq{2X15.4} read
\be
 \label{2X15.4a}
 \nabla_{(a}Y_{b)}
 = K_{ab}N = O_g(z^{\alpha+\rho})
 \,.
\ee
The left-hand side equals
\bea
 \label{2X15.4b}
 \lefteqn{
 \nabla_{(a}Y_{b)}
  = \partial_{(a} Y_{b)} - \Gamma^k_{ab} Y_k
 }
&&
\\
  &   & =
    - \frac 1{2} \zmcD_a \zmcD_b f - \frac{f }{z^2} \ringh_{ab} + O(z^{\alpha+\rho-2} )+ O(z^{\alpha-1} )f + O(z^{\alpha} )\partial_c f
 \,,
 \nn
\eea
where $\zmcD_a$ denotes the covariant derivative of the metric $\ringh$. Comparing with \eq{3XI15.1} and \eq{2X15.4a}   we conclude that $f\equiv 0$ if $\alpha+\rho>0$, so that in all cases we have
\bel{2X15.10c}
 Y_k =  O(z^{\alpha+\rho-1})
 \,,
  \quad
 z\partial_i Y_k =   O(z^{\alpha+\rho-1})
 \,,
\ee
equivalently
\bel{2X15.10d}
 | Y|_g + |\nabla Y|_g =  O(z^{\alpha+\rho })
 \,.
\ee
Thus
\bel{2X15.12}
  |N|+|Y|_g+|\nabla N|_g+|\nabla Y|_g = O(z^{\alpha+\rho })
 \,.
\ee
Since $\alpha+\rho>\hat \rho$, this contradicts the definition of $\hat \rho$ when $\hat \rho <\infty$.

We conclude that $\hat \rho = \infty$.
Thus $N$ and $Y$ decay arbitrary fast at the conformal boundary.

To finish the proof, let
$$
 f:= N^2 +|Y|_g^2+|\nabla N|_g^2+|\nabla Y|_g ^2
 \,.
$$
\Eq{2X15.5} and \eq{2X15.16} imply that there exists a constant $C>0$ such that
$$
 z \partial_z f \le C f
 \,.
$$
Equivalently,
$$
 \partial_z ( z^{-C} f ) \le 0
 \,.
$$
So  $ z^{-C} f $ is decreasing, non-negative, and tends to zero as $f$ approaches zero. We conclude that $f\equiv 0$ and the result is established.
\qedskip

\section{Asymptotic behavior near the corner}
 \label{A12VII15.1}

We consider a manifold $\Omega$ with a corner
of the form
$$
 \Omega= \{x\ge 0\,,\ z\ge 0\}\times N
 \,,
$$
where $N$ is a compact manifold, with a smooth metric
\bel{15IX15.2}
 g= z^{-2}\underbrace{(dz^2 +dx^2+h)}_{=:\tilde g}
\ee
where $h$ is a smooth Riemannian metric on $N$. In our applications in the main body of this work neither the manifold nor $g$ are of this form, but $g$ is suitably equivalent to \eq{15IX15.2} in local coordinates near the corner, so that the estimates below  apply.

The aim of this appendix is to prove the following sharp estimate:

\begin{Proposition}
  \label{P15IX15.1}
Let $k>n/2$, let $\alpha$ be any number in $(0,1)$ when $\frac n2\in\Z$, and $\alpha=
 \frac 12 $  if $\frac n2\notin\Z$. Define
$$
  \rho=\sqrt{x^2+z^2}\,, \quad
\phi=\frac x\rho\,,\;\;\;\psi= x^az^b\rho^c\,,\;\;\; \varphi=x^{a+\frac n2}z^{b}\rho^{c-\frac n 2}
 \,.
$$
Then there exists a constant $C$ such that
\bel{17IX15.6}
\|u\|_{C^{k-n/2-1+\alpha}_{\phi,\varphi}(\Omega)}\leq
 C \|u\|_{H^k_{\phi,\psi}(\Omega)}
 \,.
\ee
Moreover it holds
\bel{15IX15.1}
  u\in \nozHkabc
\,,\ k>n/2
 \quad
 \Longrightarrow
 \quad
 u = o(x^{-a-n/2} z^{-b  } \rho^{ -c+n/2})
 \,,
\ee
for small $\rho$, similarly for weighted derivatives of $u$ of order strictly smaller than $k-n/2$.
\end{Proposition}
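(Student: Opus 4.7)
The plan is to establish~\eqref{17IX15.6} by a standard rescaling argument reducing it to the usual Sobolev embedding on a unit Euclidean ball, and then to deduce the $o$-decay in~\eqref{15IX15.1} by density of compactly supported tensor fields in $\mathring H^k_{\phi,\psi}$.

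For each $p_0 = (x_0, z_0, \theta_0)$ with $\rho_0$ small and a fixed sufficiently small $\epsilon > 0$, I would work on the $g$-geodesic ball $B(p_0) := B_g(p_0, \epsilon \phi(p_0))$. Condition~(B.2) of~\cite{ChDelay}, verified in the proof of Theorem~\ref{T21IX15.1}, ensures that $x$, $z$, $\rho$, and hence $\phi$, $\psi$, $\varphi$, are comparable to their values at $p_0$ throughout $B(p_0)$. I would then introduce rescaled Cartesian coordinates on a unit Euclidean ball $B_1 \subset \R^n$, with rescaling factor of order $\phi(p_0) z_0$, under which $\phi(p_0)^{-2} g$ becomes uniformly (in $p_0$) equivalent in $C^k$ to the flat metric on $B_1$, thanks to~\eqref{15IX15.2} and smoothness of $h$. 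Since $g$-covariant derivatives then acquire factors of $\phi(p_0)^{-1}$ while the volume element acquires $\phi(p_0)^n$, the bounds~\eqref{lcond} give, by a direct calculation,
\bel{planA}
 \|u\|_{H^k_{\phi,\psi}(B(p_0))}
  \sim \psi(p_0)\phi(p_0)^{n/2} \|\tilde u\|_{H^k(B_1)}\,,\qquad
  \|u\|_{C^{j}_{\phi,\varphi}(B(p_0))} \sim \varphi(p_0) \|\tilde u\|_{C^j(B_1)}\,,
\ee
for $0 \le j \le k - \lfloor n/2\rfloor$, where $\tilde u$ is the pullback of $u$.

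The standard Sobolev embedding $H^k(B_1) \hookrightarrow C^{k-n/2-1+\alpha}(B_1)$ for $k > n/2$, combined with the key identity $\varphi = \psi \phi^{n/2}$, then yields~\eqref{17IX15.6} on each $B(p_0)$ with a constant independent of $p_0$; taking the supremum over $p_0$ gives~\eqref{17IX15.6} on a neighborhood of the corner. Away from the corner the weights are bounded above and below and the same rescaling argument with $g$-balls of fixed $g$-radius reduces the claim to the usual asymptotically hyperbolic Sobolev embedding~\cite{Aubin,Hebey}.

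For~\eqref{15IX15.1}, by definition any $u \in \mathring H^k_{\phi,\psi}$ admits, for each $\delta > 0$, an approximant $u_\delta$ with compact support in $M$, hence vanishing on some neighborhood $\{\rho < \rho_\delta\}$ of the corner, with $\|u - u_\delta\|_{H^k_{\phi,\psi}} < \delta$. Applying~\eqref{17IX15.6} to $u - u_\delta$ then yields $\varphi|u| \le C\delta$ on $\{\rho < \rho_\delta\}$, so $\varphi|u| \to 0$ as $\rho \to 0$; the claim for weighted derivatives of order less than $k - n/2$ follows from the same argument applied to the corresponding $C^j_{\phi,\varphi}$-seminorms. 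The only non-routine point is the uniform-in-$p_0$ control on the pulled-back metric required for the embedding constant in~\eqref{planA} to be $p_0$-independent, but this follows from~\eqref{15IX15.2}, the smoothness of $h$, and~\eqref{lcond}; no substantial conceptual obstacle is expected.
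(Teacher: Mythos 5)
Your argument is correct and is essentially the paper's own: both proofs rescale a region of coordinate size comparable to $\phi(p_0)z_0\approx\min(x_0,z_0)$ onto a fixed unit domain, note that $x$, $z$, $\rho$ (hence $\phi$, $\psi$, $\varphi$) are uniformly comparable to their values at the centre on that region, apply the standard Sobolev embedding there, and convert back using $\varphi=\psi\,\phi^{n/2}$; the paper implements this with explicit coordinate cubes and a three-case discussion ($x_0>2z_0$, $x_0/2\le z_0\le 2x_0$, $z_0>2x_0$) instead of your $g$-geodesic balls of radius $\epsilon\phi(p_0)$, but the two devices are interchangeable. The one real difference is the passage to the little-$o$ statement \eq{15IX15.1}: you deduce it from density of compactly supported fields in $\mathring H^k_{\phi,\psi}$ combined with the uniform estimate \eq{17IX15.6}, whereas the paper observes that the local norm over the rescaled cube tends to zero as its centre approaches the corner (dominated convergence/absolute continuity of the integral); both routes work, and yours makes the role of the hypothesis $u\in\mathring H^k_{\phi,\psi}$ explicit. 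One sentence of yours is imprecise: away from the corner the weights are \emph{not} bounded above and below, since near $\{x=0\}$ (with $z$ bounded away from zero) $\phi\approx x$ and $\psi\approx x^a z^b\rho^c$ still degenerate, and near $\{z=0\}$ the factor $z^{b}$ does, so balls of fixed $g$-radius do not keep the weights comparable near $\{x=0\}$. This is harmless—your $\epsilon\phi(p_0)$-scaled balls cover those regions as well, and the paper simply defers them to the standard weighted estimates (Remark~\ref{R15IX15.1}, \cite[Theorem~2.3]{andersson:elliptic})—but you should either drop that claim or reformulate it accordingly.
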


\begin{remark}{\rm
   \label{R15IX15.1}
The estimate \eq{15IX15.1} is standard away from the corner $x=z=0$ (thus, for $x>\epsilon>0$ or $z>\epsilon>0$ or both); cf., e.g.,~\cite[Theorem~2.3]{andersson:elliptic}.
\myqed
}\end{remark}

\proof
Let $(x,y,z)$ be a natural coordinate system near the corner, where $y=(y^A)$ is a local coordinate system on $N$. Without loss of generality we can assume that the coordinates range over
$$
 (x,y,z) \in (0,4) \times(-2,2)^{n-2} \times(0,4)
  \,.
$$
By Remark~\ref{R15IX15.1}, it suffices to prove \eq{15IX15.1} near points $  (x_0,y_0,z_0)$ such that
$$
 (x_0,y_0,z_0)\in  (0,1) \times(-1,1)^{n-2} \times(0,1)
  = :{\mathcal C}
  \,.
$$

For $0<\epsilon  <1$  let
$$
\begin{array}{llll}\varphiepsiloneta :&{\mathcal C}&\longrightarrow& \R^n\,,\\
&(\hat x,\hat y,\hat z)&\mapsto&(x,y,z):=(x_0+\epsilon \hat x,y_0+\epsilon \hat y,z_0+\etaepsilon \hat z)
 \,.\end{array}
$$
Set
$$
v=u\circ \varphiepsiloneta  \,.
$$
Then
$$
\partial_{\hat z}v=\epsilon(\partial_zu)\circ\varphiepsiloneta  \,,
 \quad
\partial_{\hat x}v=\etaepsilon (\partial_xu)\circ\varphiepsiloneta  \,,
    \quad
\partial_{\hat y}v=\epsilon (\partial_yu)\circ\varphiepsiloneta  \,.
$$

We consider the  norms $
\|u\|_{\mathring H^k_{\phi,\psi}(\varphiepsiloneta (\mathcal C))}
$, which are all finite.  In fact it  follows from the dominated convergence theorem that
\bel{16IX15.3}
\|u\|_{\mathring H^k_{\phi,\psi}(\varphiepsiloneta (\mathcal C))}\to 0
\ee
when $\epsilon $  goes to zero.

Suppose, first, that $x_0>2z_0$. Choose $\epsilon   = z_0/3$.
On $\varphiepsiloneta (\mathcal C)$  we have
$$
x\approx x_0\,,\;\;z\approx z_0\approx \epsilon\,,\;\;\rho\approx \rho_0:=\sqrt{x_0^2+z_0^2}\approx x_0
\,,
$$
where ``$s \approx t$'' means that there exists a constant $C>0$ such that $C^{-1} s \le t \le C s$.

In local coordinates, the Riemannian measure $d\mu_g$ associated to $g$ and appearing in the integrals defining the norm $
\|u\|_{\mathring H^k_{\phi,\psi}(\varphiepsiloneta (\mathcal C))}
$ is equivalent to
$$
d\mu:=z^{-n}dx\,dy^{n-2} dz\approx z_0^{-n }\etaepsilon^n  d\hat x\,d\hat y^{n-2}  d\hat z
 \,.
$$

It follows from the definition that for $0\le |\beta| \le k$ we have
$$
 ({xz}\rho^{-1})^{|\beta|}\partial ^\beta u\in L^2(\Omega,x^{2a}z^{2b-n}\rho^{2c}dx\,d^{n-2}y\,dz)
$$
or, equivalently, for all $0\le |\beta|=i\le k$,
\beaa
&
\partial^\beta u\in L^2(\Omega,x^{2a+2i}z^{2b-n+2i}\rho^{2c-2i}dx\,d^{n-2}y\,dz)
 \,.
  &
\eeaa
This implies that  for all $ 0\le i_x+i_z+|\alpha|=i\le k $
we have
\bel{16IX15.1}
 \phantom{xxx}
\partial_{\hat x}^{i_x} \partial_{\hat z}^{i_z} \partial_{\hat y}^{\alpha} v
 \in
  L^2\bigg({\mathcal C},
   \underbrace{
    x_0^{2a+2i}z_0^{2b-n+2i}\rho_0^{2c-2i} \epsilon^{-2i+n}
       }_{
         \approx  x_0^{2a }z_0^{2b  }\rho_0^{2c }
          \approx  x_0^{2a +n}z_0^{2b  }\rho_0^{2c -n}
        }
 d\hat x\,d^{n-2}\hat y\,d\hat z\bigg)
\,,
\ee
with the norm in the space there going to zero as $\epsilon$ goes  to zero by \eq{16IX15.3}.

By the Sobolev embedding on $\mathcal C$,
$v$ is pointwise bounded  by its $H^k(\mathcal C)$ norm. Hence
$$
v= o(
     x_0^{-a -n/2}z_0^{-b  }\rho_0^{-c+n/2 } )\,.
$$
Then  on $\varphiepsiloneta (\mathcal C)$ we obtain
\bel{17IX15.5}
u= o(
     x ^{-a -n/2}z ^{-b  }\rho ^{-c+n/2 } ) \,.
\ee

Suppose, next, that $x_0/2 \le  z_0\le 2x_0$. Choose   $\epsilon = x_0/6$.
On $\varphiepsiloneta (\mathcal C)$  we have
$$
x\approx x_0\approx \epsilon\,,\;\;z\approx z_0\approx \epsilon \,,\;\;\rho\approx \rho_0 \approx \epsilon
\,,
$$
\Eq{16IX15.1} becomes now
\bel{16IX15.1+}
 \phantom{xxx}
\partial_{\hat x}^{i_x} \partial_{\hat z}^{i_z} \partial_{\hat y}^{\alpha} v
 \in
  L^2\bigg({\mathcal C},
   \underbrace{
    x_0^{2a+2i}z_0^{2b-n+2i}\rho_0^{2c-2i} \epsilon^{-2i+n}
       }_{
         \approx  \epsilon^{2a +2b +2c }
          \approx  x_0^{2a +n}z_0^{2b  }\rho_0^{2c -n}
        }
 d\hat x\,d^{n-2}\hat y\,d\hat z\bigg)
\,,
\ee
which leads again to \eq{17IX15.5}.

Suppose, finally, that $z_0>2x_0$. Choose $\epsilon   = x_0/3$.
On $\varphiepsiloneta (\mathcal C)$  we have
$$
x\approx x_0\approx \epsilon\,,\;\;z\approx z_0\,,\;\;\rho\approx \rho_0 \approx z_0
\,,
$$
Obvious modifications of the calculations above lead again to \eq{17IX15.5}, and
\eqref{15IX15.1} is established.

In a similar way, we can use the Sobolev embedding
$$
\|v\|_{C^{k-n/2-1,\alpha}(\mathcal C)}\leq C \|v\|_{H^k(\mathcal C)}
 \,,
 \quad k>\frac n2
 \,,
$$
to obtain \eq{17IX15.6}.
\qed

\begin{remark}{\rm
  One can find a number $N<\infty$ and a covering of $\Omega$ by cubes ${\mathcal C}_i:=\varphi_{\epsilon_i}({\mathcal C})$ as in the proof of Proposition~\ref{P15IX15.1}, with centers $(x_i,y_i,z_i)$, so that every point in $\Omega$ is included in at most $N$ such cubes. It then follows from the proof above that
  %
\bel{17IX15.7}
   \sum_i\|u\|_{C^{k-n/2-1+\alpha}_{\phi,\varphi}({\mathcal C}_i)}
    \leq C \|u\|_{H^k_{\phi,\psi}(\Omega)}
 \,,
\ee
for some constant $C$.
}
\myqed
\end{remark}

\section{Differentiability of the constraint map}
 \label{A21XI15.1}

We sketch the argument   justifying that the map
\bel{16VI16.4}
 (\delta K,\delta g)\mapsto \sources(K+\delta K,g+\delta g)-\sources(K,g),
\ee
where the constraint map $\sources$ has been defined in (\ref{defJrho}),
is well defined and differentiable near zero on the spaces we work with.
This is well described for instance in the proof of Corollary~3.2 of \cite{bartnik:phase}.

First, when $\delta g$ is small in $\phi^2\mathring  H^{k+2}_{\phi,\psi^{-1}}$, with $a$ large, $b\geq 0$ and $k>n/2$ then (see Proposition~\ref{P15IX15.1}) $g+\delta g$
remains positive definite.

Next, we  have to verify that the map \eq{16VI16.4} is locally bounded  near zero from
$$
 X:=\psi^2(\phi \mathring H^{k+2}_{\phi,\psi}, \phi^2 \mathring H^{k+2}_{\phi,\psi})
=(\phi \mathring H^{k+2}_{\phi,\psi^{-1}}, \phi^2 \mathring H^{k+2}_{\phi,\psi^{-1}})
$$
to
$$
 Y:=\psi^2( \mathring H^{k+1}_{\phi,\psi},  \mathring H^{k}_{\phi,\psi})=
( \mathring H^{k+1}_{\phi,\psi^{-1}},  \mathring H^{k}_{\phi,\psi^{-1}})
 \,.
 $$
This is the case for large $a$, $b\geq 0$ and $k>n/2$ so  that,
see Proposition~\ref{P15IX15.1}, the space $H^k_{\phi,\psi^{-1}}$ is an algebra.
At this stage it remains to note that all the estimates can be choosen uniform
for initial data close to $(K,g)$ in $W^{k+3,\infty}_\phi\times W^{k+4,\infty}_\phi$,
in particular for initial data close to $(K,g)$
in $C^{k+3}(\overline \Omega)\times C^{k+4}(\overline \Omega)$.
Here
$$
W^{k,\infty}_{\phi}:=\{u\in W^{k,\infty}_{\loc} \mbox{ such that for $0\le i\le k$ we have\ }
\phi^i|\nabla^{(i)}u|_g\in L^{\infty}\}\,,
$$
with the obvious norm, and with $\nabla^{(i)}u$ --- the tensor of
(possibly distributional) $i$-th covariant derivatives of $u$.

\bigskip

\noindent
{\sc Acknowledgements:} {Work supported in parts by the Agence Nationale de la Recherche
 through grants ANR  SIMI-1-003-01 and  ANR-10-BLAN 0105.}

\bibliographystyle{amsplain}
\bibliography{../references/hip_bib,%
../references/reffile,%
../references/newbiblio,%
../references/newbiblio2,%
../references/chrusciel,%
../references/bibl,%
../references/howard,%
../references/bartnik,%
../references/myGR,%
../references/newbib,%
../references/Energy,%
../references/dp-BAMS,%
../references/prop2,%
../references/besse2,%
../references/netbiblio,%
../references/PDE}

\def\polhk#1{\setbox0=\hbox{#1}{\ooalign{\hidewidth
  \lower1.5ex\hbox{`}\hidewidth\crcr\unhbox0}}} \def\cprime{$'$}
  \def\cprime{$'$}
\providecommand{\bysame}{\leavevmode\hbox to3em{\hrulefill}\thinspace}
\providecommand{\MR}{\relax\ifhmode\unskip\space\fi MR }
\providecommand{\MRhref}[2]{%
  \href{http://www.ams.org/mathscinet-getitem?mr=#1}{#2}
}
\providecommand{\href}[2]{#2}
\begin{thebibliography}{10}

\bibitem{andersson:elliptic}
L.~Andersson, \emph{Elliptic systems on manifolds with asymptotically negative
  curvature}, Indiana Univ.\ Math.\ Jour. \textbf{42} (1993), 1359--1388.
  \MR{1266098}

\bibitem{AndChDiss}
L.~Andersson and P.T. Chru\'{s}ciel, \emph{On asymptotic behavior of solutions
  of the constraint equations in general relativity with ``hyperboloidal
  boundary conditions''}, Dissert. Math. \textbf{355} (1996), 1--100 (English).
  \MR{MR1405962 (97e:58217)}

\bibitem{Aubin76}
T.~Aubin, \emph{{Espaces de Sobolev sur les vari\'et\'es Riemanniennes}},
  {Bull. Sci. Math., II. S\'er.} \textbf{100} (1976), 149--173 (French).
  \MR{0488125}

\bibitem{Aubin}
\bysame, \emph{Nonlinear analysis on manifolds. {Monge--Amp\`ere} equations},
  Grundlehren der Mathematischen Wissenschaften [Fundamental Principles of
  Mathematical Sciences], vol. 252, Springer, New York, Heidelberg, Berlin,
  1982. \MR{681859}

\bibitem{bartnik:phase}
R.~Bartnik, \emph{Phase space for the {E}instein equations}, Commun.\ Anal.\
  Geom. \textbf{13} (2005), 845--885 (English), arXiv:gr-qc/0402070.
  \MR{MR2216143 (2007d:83012)}

\bibitem{CarlottoSchoen}
A.~Carlotto and R.~Schoen, \emph{Localizing solutions of the {Einstein}
  constraint equations}, Invent.\ math.\ (2015), 1--57, arXiv:1407.4766
  [math.AP].

\bibitem{CCI2}
P.T. Chru\'{s}ciel, J.~Corvino, and J.~Isenberg, \emph{{Construction of
  $N$-body initial data sets in general relativity}}, Commun.\ Math.\ Phys.
  \textbf{304} (2010), 637--647 (English), arXiv:0909.1101 [gr-qc].
  \MR{2794541}

\bibitem{ChDelay}
P.T. Chru\'{s}ciel and E.~Delay, \emph{On mapping properties of the general
  relativistic constraints operator in weighted function spaces, with
  applications}, M\'em.\ Soc.\ Math.\ de France. \textbf{94} (2003), vi+103
  (English), arXiv:gr-qc/0301073v2. \MR{MR2031583 (2005f:83008)}

\bibitem{ChDelayHilbert}
\bysame, \emph{Manifold structures for sets of solutions of the general
  relativistic constraint equations}, Jour.\ Geom\ Phys. (2004), 442--472,
  arXiv:gr-qc/0309001v2. \MR{MR2085346 (2005i:83008)}

\bibitem{ChHerzlich}
P.T. Chru\'{s}ciel and M.~Herzlich, \emph{The mass of asymptotically hyperbolic
  {R}iemannian manifolds}, Pacific J. Math. \textbf{212} (2003), 231--264,
  arXiv:dg-ga/0110035. \MR{MR2038048 (2005d:53052)}

\bibitem{Corvino}
J.~Corvino, \emph{Scalar curvature deformation and a gluing construction for
  the {E}instein constraint equations}, Commun.\ Math.\ Phys. \textbf{214}
  (2000), 137--189. \MR{MR1794269 (2002b:53050)}

\bibitem{CorvinoHuang}
J.~Corvino and L.-H. Huang, \emph{Localized deformation for initial data sets
  with the dominant energy condition},  (2016), arXiv:1606.03078 [math-dg].

\bibitem{Erwanninterpolating}
E.~Delay, \emph{Localized gluing of {Riemannian} metrics in interpolating their
  scalar curvature}, Diff.\ Geom.\ Appl. \textbf{29} (2011), 433--439,
  arXiv:1003.5146 [math.DG]. \MR{2795849 (2012f:53057)}

\bibitem{F1}
H.~Friedrich, \emph{On the regular and the asymptotic characteristic initial
  value problem for {E}instein's vacuum field equations}, Proc.\ Roy.\ Soc.\
  London Ser.\ A \textbf{375} (1981), 169--184. \MR{MR618984 (82k:83002)}

\bibitem{Friedrich83}
\bysame, \emph{Cauchy problems for the conformal vacuum field equations in
  general relativity}, Commun.\ Math.\ Phys. \textbf{91} (1983), 445--472
  (English). \MR{727195}

\bibitem{Hebey}
E.~Hebey, \emph{{Sobolev spaces on Riemannian manifolds}}, Lecture Notes in
  Mathematics, vol. 1635, Springer-Verlag, Berlin, 1996. \MR{1481970}

\bibitem{ILS}
J.~Isenberg, J.M. Lee, and I.~Stavrov~Allen, \emph{Asymptotic gluing of
  asymptotically hyperbolic solutions to the {E}instein constraint equations},
  Ann.\ Henri Poincar\'e \textbf{11} (2010), 881--927, arXiv:0910.1875
  [math-dg]. \MR{2736526 (2012h:58037)}

\bibitem{Mazzeo:hodge}
R.~Mazzeo, \emph{The {H}odge cohomology of a conformally compact metric},
  Jour.\ Diff.\ Geom. \textbf{28} (1988), 309--339. \MR{MR961517 (89i:58005)}

\bibitem{MazzeoPacardCMCinAH}
R.~Mazzeo and F.~Pacard, \emph{Constant curvature foliations in asymptotically
  hyperbolic spaces}, Rev.\ Mat.\ Iberoam.\ \textbf{27} (2011), 303--333.
  \MR{2815739 (2012k:53051)}

\bibitem{TimConformal}
T.-T. Paetz, \emph{{Conformally covariant systems of wave equations and their
  equivalence to Einstein's field equations}}, Ann. H.~Poincar\'e \textbf{16}
  (2015), 2059--2129, arXiv:1306.6204 [gr-qc]. \MR{3383323}

\bibitem{CorvinoSchoen}
R.~Schoen, \emph{Vacuum spacetimes which are identically {S}chwarzschild near
  spatial infinity}, talk given at the Santa Barbara Conference on Strong
  Gravitational Fields, June 22-26, 1999,
  \url{http://doug-pc.itp.ucsb.edu/online/gravity_c99/schoen/}.

\bibitem{Wang}
X.~Wang, \emph{Mass for asymptotically hyperbolic manifolds}, Jour.\ Diff.\
  Geom. \textbf{57} (2001), 273--299. \MR{MR1879228 (2003c:53044)}

\end{thebibliography}

\end{document}